    \newcommand{\be}{\begin{equation}}
    \newcommand{\ee}{\end{equation}}
     \def\0{\mbox{\boldmath $0$}}
	\newtheorem{thm}{Theorem}[section]
	\newtheorem{rem}[thm]{Remark}
	\newtheorem{assumption}{Assumption}
\title{Second order stabilized semi-implicit scheme for the Cahn-Hilliard model with dynamic boundary conditions}
\date{\today}
\begin{document}
\author{
Xiangjun Meng \thanks{School of Mathematical Sciences, Beijing Normal University, Beijing 100875, China
(xjmengbnu@163.com)}
\and
Xuelian Bao \thanks{
School of Mathematical Sciences, Laboratory of Mathematics and Complex Systems, MOE,
Beijing Normal University, Beijing 100875, China.
Research Center for Mathematics and Mathematics Education,
Beijing Normal University at Zhuhai, Zhuhai 519087, China
(xlbao@bnu.edu.cn)}
\and
Zhengru Zhang \thanks{School of Mathematical Sciences, Beijing Normal University, Beijing 100875, China
(Corresponding Author: zrzhang@bnu.edu.cn)
}
}

\maketitle

\vspace{2mm}
\numberwithin{equation}{section}
	\maketitle
	\numberwithin{equation}{section}

\begin{abstract}
We study the numerical algorithm and error analysis for the Cahn-Hilliard equation with dynamic boundary
conditions. A second-order in time, linear and energy stable scheme is proposed, which is an extension of the first-order
stabilized approach. The corresponding energy stability and convergence analysis of the scheme are derived theoretically.
Some numerical experiments are performed to verify the effectiveness and accuracy of the second-order numerical scheme,
including numerical simulations under various initial conditions and energy potential functions, and comparisons with the
literature works.
\end{abstract}
	
\noindent
{\bf Key words.} \, Cahn-Hilliard equation, dynamic boundary conditions, second order backward differentiation
formula,  energy stability, convergence analysis \\

\noindent
{\bf AMS subject classifications.} \, 65M12,\,65N12,\,65Z05

\medskip

\section{Introduction}
The Cahn-Hilliard equation was originally introduced by Cahn and Hilliard \cite{cahn1958free} to describe phase
separation and coarsening in heterogeneous systems such as alloys, glass and polymer mixtures. The standard
Cahn-Hilliard equation can be written as follows:
\begin{numcases}{}
\phi_t=\Delta \mu,                   & $(\mathbf{x},t)\in \Omega \times (0,T)$,\nonumber\\
\mu=-\varepsilon \Delta \phi + \frac{1}{\varepsilon}F^\prime(\phi),   & $(\mathbf{x},t)\in \Omega \times (0,T)$,\nonumber
\end{numcases}
where the parameter $\varepsilon>0$ means the thickness of the interface,
$\Omega\subseteq \mathbb{R}^d(d=2,3)$ denotes a bounded domain whose
boundary $\Gamma=\partial \Omega$ with the unit outward normal vector $\bold{n}$. To describe binary alloys, the
function $\phi$ represents the difference between the two local relative concentrations. The area of $\phi=\pm 1$
corresponds to the pure phase of the materials, which are separated by an interfacial region whose thickness is
proportional to $\varepsilon$.

The Cahn-Hilliard equation can be alternatively viewed as the gradient flow of the Ginzburg-Landau type energy
functional
\[
E^{bulk}(\phi)=\int_\Omega\left\{\frac{\varepsilon}{2}|\nabla \phi|^2+\frac{1}{\varepsilon}F(\phi)\right\}dx,
\]
in $H^{-1}$.
$\mu$ denotes the chemical potential in $\Omega$, which can be expressed as the Fr\'{e}chet derivative of the bulk
free energy $E^{bulk}$.  The term $f (x) = F^\prime(x)$ with $F(x)$ being a given double-well potential as
\begin{eqnarray}
	F(x)=\frac{1}{4}(x^2-1)^2, \quad f(x)=x^3-x, \quad x\in \mathbb{R}.
\end{eqnarray}
When the time evolution of  $\phi$ is limited to a bounded region, the appropriate boundary conditions are required.
The classical choice is homogeneous Neumann condition:
\begin{numcases}{}
		\partial_{\bold n} \mu=0,                   & $({\bf x},t)\in \Gamma \times (0,T)$,\nonumber\\
			\partial_{\bold n} \phi=0,                   & $(\mathbf{x},t)\in \Gamma \times (0,T)$,\nonumber
\end{numcases}
where $\partial_{\bold n}$ represents the outward normal derivative on $\Gamma$. The two most important properties of
Cahn-Hilliard equation are the conservation of mass
\[
\int_{\Omega}\phi(t)dx=\int_{\Omega}\phi(0)dx ,\quad\quad\forall t \in [0,T],
\]
and energy decreasing
\[
\frac{d}{dt}E^{bulk}(\phi)=-\|\nabla\mu\|^2_{\Omega}\leq 0.
\]
When considering some special applications (for example, the hydrodynamics applications, such as contact line
problem), it is necessary to describe the short-range interaction between the mixture and the solid wall.
However, the standard homogeneous Neumann condition ignores the influence of boundary on volume dynamics. Therefore,
the researchers added surface energy into the total energy in recent years,
\begin{eqnarray}
	E^{total}(\phi,\psi)=E^{bulk}(\phi)+E^{surf}(\psi),
\end{eqnarray}
with
\[
	E^{surf}(\psi)=\int_\Gamma\left\{\frac{\delta\kappa}{2}|\nabla_\Gamma \psi|^2+\frac{1}{\delta}G(\psi)\right\}dS,
\]
where $\delta$ denotes the thickness of the interface area on the boundary and the parameter $\kappa$ is related to the surface
diffusion. If $\kappa=0$,  it is related to the moving contact line problem. $G$ is the surface potential,
$\nabla_\Gamma$ represents the tangential surface gradient operator and $\Delta_\Gamma$ denotes the Laplace-Beltrami
operator on $\Gamma$. Several dynamic boundary conditions have been proposed and analyzed, for example, \cite{colli2017class,gal2006cahn,knopf2020convergence,knopf2020phase,liu2019energetic,colli2015cahn,
mininni2017higher,racke2003cahn}. By taking the variational derivative of the total energy, Liu and Wu proposed
Cahn-Hilliard model with dynamic boundary conditions called Liu-Wu Model \cite{liu2019energetic}:
\begin{numcases}{}
\phi_t=\Delta \mu,                   & $(\mathbf{x},t)\in \Omega \times (0,T)$,\nonumber\\
\mu=-\varepsilon \Delta \phi + \frac{1}{\varepsilon}F^\prime(\phi),     &$(\mathbf{x},t)\in \Omega \times (0,T)$,\nonumber\\
\partial_{\bold n} \mu=0,                   & $(\mathbf{x},t)\in \Gamma \times (0,T)$,\nonumber\\
\phi|_\Gamma=\psi,     &$(\mathbf{x},t)\in \Gamma \times (0,T)$,\label{LWmodel}\\
\psi_t=\Delta_\Gamma \mu_\Gamma,                   & $(\mathbf{x},t)\in \Gamma \times (0,T)$,\nonumber\\
\mu_\Gamma=-\delta\kappa \Delta_\Gamma \psi + \frac{1}{\delta}G^\prime(\psi)+\varepsilon\partial_{\bold n} \phi,
&$(\mathbf{x},t)\in \Gamma \times (0,T)$.\nonumber
\end{numcases}
Here, $\mu$, $\mu_\Gamma$ denote the chemical potentials in the bulk and on the boundary, respectively. The model
assumes that there is no mass exchange between the bulk and the boundary, namely, $\partial_{\bold n}\mu=0$.
The classical choice of $F,\,G$ is the smooth double-well potential
\begin{equation}
\label{FGdwell}
	F(x)=\frac{1}{4}(x^2-1)^2,\quad G(x)=\frac{1}{4}(x^2-1)^2, \quad x\in \mathbb{R}.
\end{equation}
Moreover, the dynamic boundary conditions ensure the conservation of the total mass
\[
\int_{\Omega}\phi(t)dx+\int_{\Gamma}\psi(t)dS=\int_{\Omega}\phi(0)dx+\int_{\Gamma}\psi(0)dS,\quad\quad\forall t \in [0,T],
\]
especially we have
\begin{equation}
\label{MassCSV}
\int_{\Omega}\phi(t)dx=\int_{\Omega}\phi(0)dx,\quad \int_{\Gamma}\psi(t)dS=\int_{\Gamma}
\psi(0)dS,\quad\quad\forall t \in [0,T],
\end{equation}
indicating that the Liu-Wu model satisfies the mass conservation law in the bulk and on the boundary, respectively.
Moreover, it is easy to find that the system \eqref{LWmodel} satisfies energy dissipation law:
\[
\frac{d}{dt}E^{total}(\phi,\psi)=-\| \nabla\mu\|^2_{\Omega}-\|\nabla_{\Gamma}\mu_\Gamma\|^2_{\Gamma}\leq 0.
\]

The numerical algorithms of Cahn-Hilliard equation have been well studied. There are many effective methods
for time discretization, such as the convex splitting method
\cite{grun2013convergent,shen2012second}, the invariant energy quadratization (IEQ) method
\cite{yang2016linear,yang2018linear,zhao2018general}, the scalar auxiliary variable (SAV) method \cite{shen2018scalar}
and stabilized linearly implicit approach.
Chen and Shen \cite{chen1998applications} and Zhu et al. \cite{zhu1999coarsening} applied the Fourier-spectral method to the stabilized semi-implicit scheme for the Cahn-Hilliard equation. Xu and Tang \cite{xu2006stability} introduced different
stability terms and established a large time-stepping stable semi-implicit method for the two-dimensional epitaxial
growth model. He et al. \cite{he2007large}  proposed a similar method for the Cahn-Hilliard equation, in which
the stability term $B(\phi^{n+1}-\phi^n)$ (or $B(\phi^{n+1}-2\phi^n+\phi^{n-1})$)
is added to the nonlinear volume force of the first-order (second-order) scheme. Shen and Yang \cite{shen2010numerical} applied similar stability terms to Allen-Cahn equation and Cahn-Hilliard equation to design the unconditionally energy stable first-order linear schemes and second-order linear schemes under reasonable stability conditions. This idea has been adopted in \cite{feng2013stabilized}  for the stabilized Crank-Nicolson schemes
for phase field models. Wu et al. \cite{wu2014stabilized} proposed another stabilized  second-order Crank-Nicolson scheme
for tumor-growth system, which involved a new concave-convex energy splitting. These time marching schemes will lead to a
linear system, which is easier to solve than the nonlinear system generated by the traditional convex splitting scheme,
which implicitly deals with the nonlinear convex force. On the other hand, when explicitly dealing with nonlinear
forces, it is necessary to introduce appropriate stability terms and appropriate truncated nonlinear
function $\tilde{f}(\phi)$ instead of $f(\phi)$ to prove the unconditional energy stability property with
reasonable stability constant. It is worth mentioning that Li et al. \cite{li2017second,li2016characterizing}
proved that the energy stability characteristics can also be obtained, however, a larger stability constant is
required  with no truncation made to $f(\phi)$. Stabilization techniques are also used to construct higher order
schemes, such as exponential time difference (ETD) scheme \cite{ju2015fast} and Runge-Kutta scheme \cite{guo2016efficient,shin2017unconditionally}.

Recently, numerical approximations of the Cahn-Hilliard equation with dynamic boundary conditions have been raised (see
\cite{bao2020numerical,cherfils2010numerical,cherfils2014numerical,fukao2017structure,gong2020arbitrarily,trautwein2018finite}
). Specifically, a finite element approach for the Liu-Wu model has been proposed  in
\cite{trautwein2018finite,garcke2020weak}, where the model is simulated by the direct discretization based on
piecewise linear finite element function, and the corresponding nonlinear system is solved by the Newton's method.
However, in the above finite element scheme, the backward implicit Euler method is used for time discretization,
in which the nonlinear system needs to be solved in each time step. Recently, a linear and energy stable numerical scheme for Liu-Wu model has been proposed in \cite{bao2020numerical}, which is an extension of the stable linear implicit method for the classic boundary conditions.

In this paper, the stability and convergence of second-order semi-implicit time marching scheme are studied.
We use the second order backward differentiation formula (BDF2) to discrete time derivative. For the nonlinear force with
second-order stability, the explicit extrapolation method is used and stabilizers are added to ensure energy
dissipation, where the stabilizers are inspired by the work \cite{wang2018efficient} by Wang and Yu. The main
features of our scheme include the following: (1) To the best of our knowledge, this is the first linear,
second-order stabilized semi-implicit scheme for this model; (2) At the discrete level, the constant
coefficient linear system is obtained. We only need to solve the linear equation at each step, which reduces
the computation cost greatly; (3) Discrete energy dissipation is proved. The finite difference method is used for
spatial discretization and satisfies the discretized energy dissipation law; (4) We also give the error
analysis in $l^\infty(0,T;H^{-1})\cap l^2(0,T;H^{1})$ norm in detail.

The rest of this article is organized as follows. We first introduce some defintions and notations in
Section~\ref{sec2}. In Section~\ref{sec3} we present the BDF2-type scheme. A modified energy stability is
established and we prove that the scheme has the property of decreasing energy. Subsequently, the convergence
estimate is provided in Section~\ref{sec4}. In Section~\ref{sec5}, we present some numerical experiments, including
the cases with different initial conditions, cases with different potential functions and the accuracy test. Finally, the concluding remarks are given in Section~\ref{sec6}.
\section{Preliminaries}
\label{sec2}
Before giving the stabilized scheme and corresponding error analysis, we make some definitions in this
section which will be used in the paper.

We consider a finite time interval $[0,T]$ and a domain $\Omega\subseteq \mathbb{R}^d$, which is a
bounded domain with sufficient smooth boundary $\Gamma=\partial \Omega$ and ${\bold n} = {\bold n}(x)$ is the unit outward
normal vector on $\Gamma$.

We use  $\|\cdot\|_{m,p,\Omega}$
to denote the standard norm of the Sobolev space $W^{m,p}(\Omega)$ and  $\|\cdot\|_{m,p,\Gamma}$
to denote the standard norm of the Sobolev space $W^{m,p}(\Gamma)$.
In particular, we use $\|\cdot\|_{L^p(\Omega)}$, $\|\cdot\|_{L^p(\Gamma)}$ to
denote the norm of $W^{0,p}(\Omega) = L^p(\Omega)$ and $W^{0,p}(\Gamma) = L^p(\Gamma)$;
$\|\cdot\|_{m,\Omega}$, $\|\cdot\|_{m,\Gamma}$ to denote the norm
of $W^{m,2}(\Omega)$ = $H^2(\Omega)$ and $W^{m,2}(\Gamma)$ = $H^2(\Gamma)$;
we  also use $\|\cdot\|_{\Omega}$ and $\|\cdot\|_{\Gamma}$ to
denote the norm of $W^{0,2}(\Omega) = L^2(\Omega)$ and $W^{0,2}(\Gamma) = L^2(\Gamma)$.
Let $(\cdot,\cdot)_\Omega$,  $(\cdot,\cdot)_\Gamma$ represent the inner product of $L^2(\Omega)$ and $L^2(\Gamma)$,
respectively. In addition, define for $p \geq 0$
\[
	H^{-p}(\Omega)=(H^{p}(\Omega))^*,\quad	H_0^{-p}(\Omega)=\{u\in H^{-p}(\Omega)|\left\langle u,1\right\rangle_p=0\},
\]
where $\left\langle \cdot,\cdot\right\rangle_p$ stands for the dual product between $H^p(\Omega)$ and
$H^{-p}(\Omega)$. We denote $L_0^2(\Omega):=H_0^0(\Omega)$. For $u \in L_0^2(\Omega)$, let $-\Delta^{-1}u:=u_1
\in H^1(\Omega) \cap L_0^2(\Omega)$, where $u_1$ is the solution to
\[
-\Delta u_1=u\,\,\mathrm{in}\, \, \Omega,\, \frac{\partial u_1}{\partial {\bold n}}=0\,\,\mathrm{on} \,\,\partial\Omega,
\]
and $\| u\|_{-1,\Omega}:=\sqrt{(u,-\Delta^{-1}u)_\Omega}$. Similarly, $H^{-p}(\Gamma)$,
$L_0^2(\Gamma):=H_0^0(\Gamma)$, $\| u\|_{-1,\Gamma}:=\sqrt{(u,-\Delta_\Gamma^{-1}u)_\Gamma}$ are also defined.
Let $\tau$ be the time step size. For a sequence of functions $f^0$, $f^1,\cdots,f^N$ in some Hilbert
space $E$, we denote the sequence by $\{f_\tau \}$ and define the following discrete norm for $\{f_\tau \}$:
\[
\Vert f_\tau\Vert_{l^\infty(E)}=\max_{0\leq n\leq N}\Big (\| f^n\|_E\Big ),
\]

For simplicity, we denote
\[
\delta_t\phi^{n+1}:=\phi^{n+1}-\phi^{n}, \quad\delta_{tt}\phi^{n+1}:=\phi^{n+1}-2\phi^{n}+\phi^{n-1},\quad
\hat{\phi}^{n+1}=2\phi^{n}-\phi^{n-1},
\]
\[\delta_t\psi^{n+1}:=\psi^{n+1}-\psi^{n},\quad \delta_{tt}\psi^{n+1}:=\psi^{n+1}-2\psi^{n}+\psi^{n-1},\quad
\hat{\psi}^{n+1}=2\psi^{n}-\psi^{n-1}.
\]

\section{Second order scheme of the model}
\label{sec3}

We propose a stabilized linear BDF2 scheme for the Liu-Wu model as follows
\begin{numcases}{}
\frac{\frac{3}{2}\phi^{n+1}-2\phi^{n}+\frac{1}{2}\phi^{n-1}}{\tau}=\Delta \mu^{n+1},
 & \hspace{-0.4cm}$\mathbf{x}\in\Omega,$ \label{scheme-1}\\
\mu^{n+1}=-\varepsilon \Delta\phi^{n+1}
+\frac{1}{\varepsilon}f\left(2\phi^{n}-\phi^{n-1}\right)\nonumber\\
\,\,\,\quad\quad\quad-A_1\tau\Delta\left(\phi^{n+1}-\phi^{n}\right)
+B_1\left(\phi^{n+1}-2\phi^n+\phi^{n-1}\right),     &\hspace{-0.4cm}$\mathbf{x}\in \Omega,$ \label{scheme-2} \\
	\partial_{\bold n} \mu^{n+1}=0,                   & \hspace{-0.4cm}$\mathbf{x}\in \Gamma,$  \label{scheme-3}\\
	\phi^{n+1}|_\Gamma=\psi^{n+1},     &\hspace{-0.4cm}$\mathbf{x}\in \Gamma,$  \label{scheme-4}\\
	\frac{\frac{3}{2}\psi^{n+1}-2\psi^{n}+\frac{1}{2}\psi^{n-1}}{\tau}=\Delta_{\Gamma} \mu_\Gamma^{n+1},
& \hspace{-0.4cm}$\mathbf{x}\in \Gamma,$  \label{scheme-5}\\
	\mu_\Gamma ^{n+1}=-\delta\kappa \Delta_\Gamma\psi^{n+1}
	+\frac{1}{\delta}g\left(2\psi^{n}-\psi^{n-1}\right)+\varepsilon\partial_{\bold n} \phi^{n+1} \nonumber\\
	\,\,\,\quad\quad\quad-A_2\tau\Delta_\Gamma\left(\psi^{n+1}-\psi^{n}\right)+B_2\left(\psi^{n+1}-2\psi^n
+\psi^{n-1}\right)+A_1\tau\partial_{\bold n} \left(\phi^{n+1}-\phi^{n}\right) ,&\hspace{-0.4cm}$\mathbf{x}\in \Gamma,$ \label{scheme-6}
\end{numcases}
where  $f=F^\prime, \,g=G^\prime$ are the nonlinear chemical potential. In particular, we notice that a second order
approximation to $f$ and $g$ at time step $t^{n+1}$ are taken as $f(2\phi^n-\phi^{n-1})$
and $g(2\psi^n-\psi^{n-1})$. $T$ is the fixed time, $N$ is the number of time steps and $\tau = T/N$ is
the step size. $A_1,\,B_1,\,A_2$ and $B_2$ are four non-negative constants to be determined, and the stabilization terms
$A_1\tau\Delta\left(\phi^{n+1}-\phi^{n}\right)$, $B_1\left(\phi^{n+1}-2\phi^n+\phi^{n-1}\right)$,
$A_2\tau\Delta_\Gamma\left(\psi^{n+1}-\psi^{n}\right)$ and $B_2\left(\psi^{n+1}-2\psi^n+\psi^{n-1}\right)$ are
added to the bulk equation and boundary equation to enhance stability, respectively.
Before proving the stability, we first give some Assumptions.
\begin{assumption}
\label{assumption1}
	Assume that the Lipschitz properties hold for the second derivative of $F$ with respect to $\phi$ and
the second derivative of $G$ with  respect to $\psi$ (namely, $f^\prime$ and $g^\prime$). $f^\prime$ and
$g^\prime$ are bounded. Precisely, there exists positive constants $K_1$, $K_2$, $L_1$ and $L_2$ such that
\[
|f^\prime(\phi_1)-f^\prime(\phi_2)| \leq K_1	| \phi_1-\phi_2|,\quad
| g^\prime(\psi_1)-g^\prime(\psi_2)|\leq K_2	| \psi_1-\psi_2| ,
\quad \forall\phi_1,\phi_2,\psi_1,\psi_2 \in \mathbb{R},
\]
and
\[
\max_{\phi \in \mathbb{R}}| f^\prime(\phi)| \leq L_1 ,\quad
\max_{\psi \in \mathbb{R}}| g^\prime(\psi)| \leq L_2.
\]
\end{assumption}
\begin{assumption}
\label{assumption2}
Assume that the mass conservative property is available for the two
initial values of interior and boundary respectively:
\begin{eqnarray*}
 \frac{1}{|\Omega|} \int_\Omega{\phi^1dx}&=&\frac{1}{|\Omega|} \int_\Omega{\phi^0dx}=m_0,\\
	\frac{1}{|\Gamma|} \int_\Gamma{\psi^1dx}&=&\frac{1}{|\Gamma|} \int_\Gamma{\psi^0dx}=m_1.
	\end{eqnarray*}
\end{assumption}
We have the energy stability as follows.
\begin{thm}
\label{thm1}
	Assume that Assumption \ref{assumption1} and Assumption \ref{assumption2}
	hold. 	Then under the conditions
	\begin{eqnarray}
	A_1 \geq \frac{1}{\alpha_2} \frac{{L_1}^2}{16\varepsilon^2}-\alpha_1\frac{\varepsilon}{2\tau},\quad
B_1 \geq \frac{L_1}{\varepsilon},\label{StaCon1}\\
	A_2 \geq \frac{1}{\alpha_2} \frac{{L_2}^2}{16\delta^2}-\alpha_1\frac{\delta\kappa}{2\tau}, \quad B_2
\geq \frac{L_2}{\delta},\label{StaCon2}\\
	0 \leq \alpha_1 \leq 1,\quad 0 < \alpha_2 \leq 1,\label{StaCon3}
   \end{eqnarray}
	we have
	\begin{eqnarray}
	&&\tilde{E}(\phi^{n+1},\psi^{n+1})\nonumber\\
	&&\leq \tilde{E}(\phi^{n},\psi^{n})-\frac{1}{4\tau}\|
\delta_{tt}\phi^{n+1}\|^2_{-1,\Omega}-\frac{1}{4\tau}\| \delta_{tt}\psi^{n+1}\|_{-1,\Gamma}\nonumber\\
	&&-(1-\alpha_1)(\frac{\varepsilon}{2}\| \nabla\delta_{t}\phi^{n+1}\|_\Omega^2+\frac{\delta\kappa}{2}\|\nabla_{\Gamma}
\delta_{t}\psi^{n+1}\|_{\Gamma}^2)-(1-\alpha_2)\frac{1}{\tau}(\|  \delta_{t}\phi^{n+1}\|_{-1,\Omega}^2
	+\|\delta_{t}\psi^{n+1}\|_{-1,\Gamma}^2)\nonumber\\
	&&-\left(2\sqrt{\alpha_2\left(A_1+\frac{\alpha_1\varepsilon}{2\tau}\right)}
-\frac{L_1}{2\varepsilon}\right)\| \delta_{t}\phi^{n+1}\|_\Omega^2
	-\left(2\sqrt{\alpha_2\left(A_2+\frac{\alpha_1\delta\kappa }{2\tau}\right)}-\frac{L_2}{2\delta}\right)
\| \delta_{t}\psi^{n+1}\|_{\Gamma}^2\nonumber\\
&&-\left(\frac{B_1}{2}-\frac{L_1}{2\varepsilon}\right)\|\delta_{tt}\phi^{n+1}\|_\Omega^2-
\left(\frac{B_2}{2}-\frac{L_2}{2\delta}\right)\|\delta_{tt}\psi^{n+1}\|_{\Gamma}^2,
\label{energy}
	\end{eqnarray}
	where
	\begin{eqnarray}
\tilde{E}(\phi^{n+1},\psi^{n+1})&=&E^{total}(\phi^{n+1},\psi^{n+1})+\frac{1}{4\tau}
\left(\|\delta_{t}\phi^{n+1}\|^2_{-1,\Omega}+\|\delta_{t} \psi^{n+1}\|^2_{-1,\Gamma}\right)\nonumber\\
&&+\left(\frac{L_1}{2\varepsilon}+\frac{B_1}{2}\right)\|\delta_{t}\phi^{n+1}\|_\Omega^2
+\left(\frac{L_2}{2\delta}+\frac{B_2}{2}\right)\|\delta_{t}\psi^{n+1}\|_{\Gamma}^2.
\label{EngMod}
	\end{eqnarray}
\end{thm}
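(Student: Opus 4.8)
\textit{Overall strategy.} The plan is to reproduce, at the discrete level, the continuous energy identity $\frac{d}{dt}E^{total}=(\mu,\phi_t)_\Omega+(\mu_\Gamma,\psi_t)_\Gamma$, but carried out in the $H^{-1}(\Omega)\times H^{-1}(\Gamma)$ duality rather than in $L^2$ — this is exactly what makes the BDF2 difference generate the $\tfrac1{4\tau}\|\delta_t\,\cdot\,\|_{-1}^2$ terms sitting in $\tilde E$. First one needs discrete mass conservation: integrating \eqref{scheme-1} over $\Omega$ and \eqref{scheme-5} over $\Gamma$, using $\partial_{\bf n}\mu^{n+1}=0$ and $\int_\Gamma\Delta_\Gamma(\cdot)\,dS=0$, gives $\int_\Omega(\tfrac32\phi^{n+1}-2\phi^{n}+\tfrac12\phi^{n-1})\,dx=0$ and its $\Gamma$-analogue; with Assumption~\ref{assumption2} and induction this yields $\int_\Omega\phi^n\equiv|\Omega|m_0$, $\int_\Gamma\psi^n\equiv|\Gamma|m_1$ for all $n$, hence $\delta_t\phi^{n+1},\delta_{tt}\phi^{n+1}\in L^2_0(\Omega)$ and $\delta_t\psi^{n+1},\delta_{tt}\psi^{n+1}\in L^2_0(\Gamma)$, so that every $\|\cdot\|_{-1}$-quantity below is well defined.

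\textit{Master identity.} I would then pair \eqref{scheme-1} with $(-\Delta)^{-1}\delta_t\phi^{n+1}$ in $L^2(\Omega)$ and \eqref{scheme-5} with $(-\Delta_\Gamma)^{-1}\delta_t\psi^{n+1}$ in $L^2(\Gamma)$. Integrating by parts twice and using $\partial_{\bf n}\mu^{n+1}=0$, the right-hand sides become $-(\mu^{n+1},\delta_t\phi^{n+1})_\Omega$ and $-(\mu_\Gamma^{n+1},\delta_t\psi^{n+1})_\Gamma$, while the left-hand sides are $\tfrac1\tau\langle\tfrac32\delta_t\phi^{n+1}-\tfrac12\delta_t\phi^{n},\delta_t\phi^{n+1}\rangle_{-1,\Omega}$ and its $\Gamma$-analogue (writing $\langle u,v\rangle_{-1,\Omega}=(u,(-\Delta)^{-1}v)_\Omega$ for the $H^{-1}$-inner product whose norm is $\|\cdot\|_{-1,\Omega}$, and using $\tfrac32\phi^{n+1}-2\phi^{n}+\tfrac12\phi^{n-1}=\tfrac32\delta_t\phi^{n+1}-\tfrac12\delta_t\phi^{n}$). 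Substituting \eqref{scheme-2} for $\mu^{n+1}$ and \eqref{scheme-6} for $\mu_\Gamma^{n+1}$ into $(\mu^{n+1},\delta_t\phi^{n+1})_\Omega+(\mu_\Gamma^{n+1},\delta_t\psi^{n+1})_\Gamma$ and integrating by parts in $\Omega$, the boundary integrals $-\varepsilon\int_\Gamma\partial_{\bf n}\phi^{n+1}\,\delta_t\psi^{n+1}$ and $-A_1\tau\int_\Gamma\partial_{\bf n}\delta_t\phi^{n+1}\,\delta_t\psi^{n+1}$ produced there cancel exactly against the coupling terms $\varepsilon\partial_{\bf n}\phi^{n+1}$ and $A_1\tau\partial_{\bf n}\delta_t\phi^{n+1}$ in \eqref{scheme-6} (here one uses $\phi^{n+1}|_\Gamma=\psi^{n+1}$, $\delta_t\phi^{n+1}|_\Gamma=\delta_t\psi^{n+1}$). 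This produces the identity
\begin{align*}
&\varepsilon(\nabla\phi^{n+1},\nabla\delta_t\phi^{n+1})_\Omega+\delta\kappa(\nabla_\Gamma\psi^{n+1},\nabla_\Gamma\delta_t\psi^{n+1})_\Gamma+\tfrac1\varepsilon(f(\hat\phi^{n+1}),\delta_t\phi^{n+1})_\Omega+\tfrac1\delta(g(\hat\psi^{n+1}),\delta_t\psi^{n+1})_\Gamma\\
&\qquad+A_1\tau\|\nabla\delta_t\phi^{n+1}\|_\Omega^2+A_2\tau\|\nabla_\Gamma\delta_t\psi^{n+1}\|_\Gamma^2+B_1(\delta_{tt}\phi^{n+1},\delta_t\phi^{n+1})_\Omega+B_2(\delta_{tt}\psi^{n+1},\delta_t\psi^{n+1})_\Gamma\\
&=-\tfrac1\tau\big\langle\tfrac32\delta_t\phi^{n+1}-\tfrac12\delta_t\phi^{n},\delta_t\phi^{n+1}\big\rangle_{-1,\Omega}-\tfrac1\tau\big\langle\tfrac32\delta_t\psi^{n+1}-\tfrac12\delta_t\psi^{n},\delta_t\psi^{n+1}\big\rangle_{-1,\Gamma}.
\end{align*}

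\textit{Algebraic identities and collapse to $\tilde E$.} On the right I would use $\langle\tfrac32 a-\tfrac12 b,a\rangle=\tfrac14(\|a\|^2-\|b\|^2)+\|a\|^2+\tfrac14\|a-b\|^2$ with $a=\delta_t\phi^{n+1}$, $b=\delta_t\phi^{n}$ (so $a-b=\delta_{tt}\phi^{n+1}$) in $\|\cdot\|_{-1,\Omega}$ and likewise on $\Gamma$; this supplies the $\tfrac1{4\tau}\|\delta_t\phi^{n+1}\|_{-1}^2$ of $\tilde E$, the $-\tfrac1{4\tau}\|\delta_{tt}\phi^{n+1}\|_{-1}^2$ of \eqref{energy}, and a residual $-\tfrac1\tau\|\delta_t\phi^{n+1}\|_{-1}^2$. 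On the left, $(\nabla\phi^{n+1},\nabla\delta_t\phi^{n+1})_\Omega=\tfrac12(\|\nabla\phi^{n+1}\|_\Omega^2-\|\nabla\phi^n\|_\Omega^2)+\tfrac12\|\nabla\delta_t\phi^{n+1}\|_\Omega^2$ (and its $\Gamma$-analogue) supplies the gradient part of $E^{total}$, and $B_1(\delta_{tt}\phi^{n+1},\delta_t\phi^{n+1})_\Omega=\tfrac{B_1}2(\|\delta_t\phi^{n+1}\|_\Omega^2-\|\delta_t\phi^n\|_\Omega^2)+\tfrac{B_1}2\|\delta_{tt}\phi^{n+1}\|_\Omega^2$ the $\tfrac{B_1}2\|\delta_t\phi^{n+1}\|_\Omega^2$ part of $\tilde E$. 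For the nonlinearities, Taylor-expanding $F$ about $\hat\phi^{n+1}=2\phi^n-\phi^{n-1}$ and using $\phi^{n+1}-\hat\phi^{n+1}=\delta_{tt}\phi^{n+1}$, $\phi^n-\hat\phi^{n+1}=-\delta_t\phi^n$, $\delta_{tt}\phi^{n+1}+\delta_t\phi^n=\delta_t\phi^{n+1}$ gives $\int_\Omega(F(\phi^{n+1})-F(\phi^{n}))=(f(\hat\phi^{n+1}),\delta_t\phi^{n+1})_\Omega+\tfrac12\int_\Omega f'(\xi_1)(\delta_{tt}\phi^{n+1})^2-\tfrac12\int_\Omega f'(\xi_2)(\delta_t\phi^{n})^2$, whence, by $|f'|\le L_1$ (Assumption~\ref{assumption1}), $\tfrac1\varepsilon(f(\hat\phi^{n+1}),\delta_t\phi^{n+1})_\Omega\ge\tfrac1\varepsilon\int_\Omega(F(\phi^{n+1})-F(\phi^{n}))-\tfrac{L_1}{2\varepsilon}\|\delta_{tt}\phi^{n+1}\|_\Omega^2-\tfrac{L_1}{2\varepsilon}\|\delta_t\phi^{n}\|_\Omega^2$; similarly with $G,g,L_2$ on $\Gamma$. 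Inserting all of this into the master identity and adding $\tfrac{L_1}{2\varepsilon}\|\delta_t\phi^{n+1}\|_\Omega^2+\tfrac{L_2}{2\delta}\|\delta_t\psi^{n+1}\|_\Gamma^2$ to both sides, the left collapses exactly to $\tilde E(\phi^{n+1},\psi^{n+1})-\tilde E(\phi^n,\psi^n)$, leaving $\tilde E^{n+1}-\tilde E^{n}$ bounded above by minus all the nonnegative terms of \eqref{energy} modulo the two leftovers $\tfrac{L_1}{2\varepsilon}\|\delta_t\phi^{n+1}\|_\Omega^2$, $\tfrac{L_2}{2\delta}\|\delta_t\psi^{n+1}\|_\Gamma^2$.

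\textit{Closing the estimate; the main obstacle.} To absorb $\tfrac{L_1}{2\varepsilon}\|\delta_t\phi^{n+1}\|_\Omega^2$ one splits off an $\alpha_1$-fraction of $\tfrac\varepsilon2\|\nabla\delta_t\phi^{n+1}\|_\Omega^2$ and an $\alpha_2$-fraction of $\tfrac1\tau\|\delta_t\phi^{n+1}\|_{-1,\Omega}^2$; writing $C_1:=A_1+\tfrac{\alpha_1\varepsilon}{2\tau}$ (so $A_1\tau+\tfrac{\alpha_1\varepsilon}2=\tau C_1$), the interpolation inequality $\|u\|_\Omega^2\le\|\nabla u\|_\Omega\|u\|_{-1,\Omega}$ for $u\in L^2_0(\Omega)$ together with Young's inequality gives $\tau C_1\|\nabla\delta_t\phi^{n+1}\|_\Omega^2+\tfrac{\alpha_2}\tau\|\delta_t\phi^{n+1}\|_{-1,\Omega}^2\ge 2\sqrt{\alpha_2 C_1}\,\|\delta_t\phi^{n+1}\|_\Omega^2$, which leaves exactly the coefficient $-\big(2\sqrt{\alpha_2(A_1+\tfrac{\alpha_1\varepsilon}{2\tau})}-\tfrac{L_1}{2\varepsilon}\big)$ on $\|\delta_t\phi^{n+1}\|_\Omega^2$, together with $-(1-\alpha_1)\tfrac\varepsilon2\|\nabla\delta_t\phi^{n+1}\|_\Omega^2$ and $-(1-\alpha_2)\tfrac1\tau\|\delta_t\phi^{n+1}\|_{-1,\Omega}^2$; the same on $\Gamma$ with $\delta,\kappa,A_2,L_2$. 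This is precisely \eqref{energy}, and \eqref{StaCon1}–\eqref{StaCon3} are exactly what make every coefficient on its right nonnegative ($A_1+\tfrac{\alpha_1\varepsilon}{2\tau}\ge\tfrac{L_1^2}{16\varepsilon^2\alpha_2}\iff 2\sqrt{\alpha_2(A_1+\tfrac{\alpha_1\varepsilon}{2\tau})}\ge\tfrac{L_1}{2\varepsilon}$, and $B_1\ge\tfrac{L_1}\varepsilon\iff\tfrac{B_1}2-\tfrac{L_1}{2\varepsilon}\ge0$), so $\tilde E$ is nonincreasing; $0\le\alpha_1\le1$, $0<\alpha_2\le1$ is what the splitting requires. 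The crux of the whole argument is the choice of test functions in the second step: testing the mass-balance equations \eqref{scheme-1}, \eqref{scheme-5} against $(-\Delta)^{-1}\delta_t\phi^{n+1}$ and $(-\Delta_\Gamma)^{-1}\delta_t\psi^{n+1}$ — rather than against $\mu^{n+1},\mu_\Gamma^{n+1}$ — is what makes the BDF2 difference $\tfrac32\delta_t\phi^{n+1}-\tfrac12\delta_t\phi^{n}$ telescope with exactly the coefficient $\tfrac1{4\tau}$ in the $\|\cdot\|_{-1}$-norm appearing in $\tilde E$ (testing against $\mu^{n+1}$ instead would produce the Dahlquist G-stability form, hence a spurious $\|\nabla(2\phi^{n+1}-\phi^n)\|^2$-type term not present in $\tilde E$). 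The rest is careful but routine bookkeeping: the bulk–surface boundary cancellation (by construction of \eqref{scheme-6}), the Taylor expansion of the extrapolated potentials about $\hat\phi^{n+1},\hat\psi^{n+1}$ (the only use of Assumption~\ref{assumption1}, and the reason the remainders come out as $\|\delta_{tt}\phi^{n+1}\|^2$ and $\|\delta_t\phi^{n}\|^2$), and the final weighted Young step, which pins down the precise form of \eqref{StaCon1}, \eqref{StaCon2}.
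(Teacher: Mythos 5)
Your proposal is correct and follows essentially the same route as the paper's proof: testing \eqref{scheme-1}, \eqref{scheme-5} with $(-\Delta)^{-1}\delta_t\phi^{n+1}$, $(-\Delta_\Gamma)^{-1}\delta_t\psi^{n+1}$, the same BDF2 identity in the $\|\cdot\|_{-1}$ norms, the same Taylor expansion about $\hat\phi^{n+1},\hat\psi^{n+1}$ with $|f'|\le L_1$, $|g'|\le L_2$, the same bulk--surface boundary cancellation via $\phi^{n+1}|_\Gamma=\psi^{n+1}$, and the same $\alpha_1,\alpha_2$ splitting combined with $\|u\|^2\le\|\nabla u\|\,\|u\|_{-1}$ and Young's inequality. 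The only difference is presentational (you assemble one combined master identity before estimating, whereas the paper estimates the bulk and surface contributions separately), so nothing further is needed.
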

\begin{proof}
	Integrating both sides of equation \eqref{scheme-1}, we have
	\begin{eqnarray*}
	\frac{1}{|\Omega|}\int_{\Omega}{\phi^{n+1}dx}=m_0,\,\,n=1,...N.\nonumber
	\end{eqnarray*}
	Thus $\delta _t \phi^{n+1} \in L^2_0(\Omega)$ for $n=0,...N$.
	Pairing \eqref{scheme-1} with $(-\Delta)^{-1}\delta _t\phi^{n+1}$ and adding to \eqref{scheme-2} paired
with $-\delta _t\phi^{n+1}$, we have
	\begin{eqnarray}
	&&\left (\frac{3\phi^{n+1}-4\phi^n+\phi^{n-1}}{2\tau},(-\Delta)^{-1}\delta _t\phi^{n+1} \right
)_{\Omega}\nonumber\\
&=&\varepsilon(\Delta\phi^{n+1},\delta _t \phi^{n+1})_{\Omega}-\frac{1}{\varepsilon}
(f(\hat{{\phi}}^{n+1}), \delta _t\phi^{n+1})_{\Omega}
	+A_1\tau\left(\partial_{\bold n}(\phi^{n+1}-\phi^{n}),\delta _t \phi^{n+1}\right)_{\Gamma}\nonumber\\
	&&-A_1\tau\| \nabla\delta _t\phi^{n+1}\|_{\Omega}^2	-B_1(\delta _{tt}\phi^{n+1},\delta _t\phi^{n+1})_{\Omega}.\nonumber
	\end{eqnarray}
With the fact that
\[
2(h^{n+1}-h^{n},h^{n+1})=\| h^{n+1}\|^2-\| h^{n}\|^2+\| h^{n+1}-h^{n}\|^2,
\]
and
\begin{eqnarray*}
&&\left (\frac{3h^{n+1}-4h^n+h^{n-1}}{2\tau},h^{n+1}\right )\\
&=&\frac{1}{4\tau}(\| h^{n+1}\|^2+\| 2h^{n+1}-h^{n}\|^2-\| h^{n}\|^2-\| 2h^{n}-h^{n-1}\|^2+\| \delta_{tt}h^{n+1}\|^2),
\end{eqnarray*}
we have
\begin{eqnarray}
&-&\left(\frac{3\phi^{n+1}-4\phi^n+\phi^{n-1}}{2\tau},(-\Delta)^{-1}\delta_t\phi^{n+1}\right)_{\Omega}\nonumber\\
&=&-\frac{1}{\tau}\|\delta_t\phi^{n+1}\|^2_{-1,\Omega}	-\frac{1}{4\tau}(\|\delta_t\phi^{n+1}\|^2_{-1,\Omega}
-\|\delta_t\phi^{n}\|^2_{-1,\Omega}+\|\delta_{tt}\phi^{n+1}\|^2_{-1,\Omega}),
\label{energyproof1}
	\end{eqnarray}
	\begin{eqnarray}
	\varepsilon(\Delta\phi^{n+1},\delta_t\phi^{n+1})_{\Omega}=\varepsilon(\partial_{\bold n}\phi^{n+1},
\delta_t\phi^{n+1})_{\Gamma}-\frac{\varepsilon}{2}(\|\nabla\phi^{n+1}\|_{\Omega}^2
-\|\nabla\phi^{n}\|_{\Omega}^2+\|\nabla\delta_{t}\phi^{n+1}\|_{\Omega}^2),
\label{energyproof2}
	\end{eqnarray}
and
	\begin{eqnarray}
-B_1(\delta_{tt}\phi^{n+1},\delta_{t}\phi^{n+1})_{\Omega}=-\frac{B_1}{2}\|\delta_{t}
\phi^{n+1}\|_{\Omega}^2+\frac{B_1}{2}\|\delta_{t}\phi^{n}\|_{\Omega}^2-\frac{B_1}{2}\|\delta_{tt}\phi^{n+1}\|_{\Omega}^2.
\label{energyproof3}
	\end{eqnarray}
Expanding  $F(\phi^{n+1})$ and $F(\phi^{n})$ at $\hat{\phi}^{n+1}=2\phi^{n}-\phi^{n-1}$ yields
\[
	F(\phi^{n+1})=  F(\hat{\phi}^{n+1})+f(\hat{\phi}^{n+1})(\phi^{n+1}-\hat{\phi}^{n+1})+\frac12f^\prime(\xi_1^{n})
(\phi^{n+1}-\hat{\phi}^{n+1})^2,
\]
and
\[	F(\phi^{n})=F(\hat{\phi}^{n+1})+f(\hat{\phi}^{n+1})(\phi^{n}-\hat{\phi}^{n+1})+\frac12f^\prime(\xi_2^{n})
(\phi^{n}-\hat{\phi}^{n+1})^2,
\]
where $\xi_1^{n}$ is between $\phi^{n+1}$ and $\hat{\phi}^{n+1}$, $\xi_2^{n}$ is between $\phi^{n}$ and
$\hat{\phi}^{n+1}$. Substracting the above two equations and using the facts that $\phi^{n+1}-\hat{\phi}^{n+1}=\delta_{tt}\phi^{n+1}$ and $\phi^{n}-\hat{\phi}^{n+1}=-\delta_{t}\phi^{n}$, we obtain
	\begin{eqnarray}
	F(\phi^{n+1})-F(\phi^{n})-f(\hat{\phi}^{n+1})\delta_{t}\phi^{n+1}&=&\frac12f^\prime(\xi_1^{n})
(\delta_{tt}\phi^{n+1})^2-\frac12f^\prime(\xi_2^{n})(\delta_{t}\phi^{n})^2\nonumber\\
	&\leq& \frac{L_1}{2}|\delta_{tt}\phi^{n+1}|^2+\frac{L_1}{2}|\delta_{t}\phi^{n}|^2.
\label{energyproof4}
	\end{eqnarray}
Combining the result with \eqref{energyproof1}-\eqref{energyproof4}, we obtain
	\begin{eqnarray*}
	\uppercase\expandafter{\romannumeral1}&:=&-\frac{1}{\varepsilon}\left(f(\hat{\phi}^{n+1}),
\delta_{t}\phi^{n+1}\right)_{\Omega}\\
	&=&\frac{1}{\tau}\| \delta_{t}\phi^{n+1}\|^2_{-1,\Omega}+\frac{1}{4\tau}\left(\|
\delta_{t}\phi^{n+1}\|^2_{-1,\Omega}-\|\delta_{t}\phi^{n}\|^2_{-1,\Omega}+\|\delta_{tt}\phi^{n+1}\|^2_{-1,\Omega}\right)\\
	&-&\varepsilon(\partial_{\bold{n}}\phi^{n+1},\delta_{t}\phi^{n+1})_{\Gamma}+\frac{\varepsilon}{2}\left(\|
\nabla\phi^{n+1}\|_\Omega^2-\|\nabla\phi^{n}\|_\Omega^2+\|\nabla\delta_{t}\phi^{n+1}\|_\Omega^2\right)\\
&-&A_1\tau(\partial_{\bold {n}}(\phi^{n+1}-\phi^{n}),\delta_{t}\phi^{n+1})_{\Gamma}+A_1\tau\|\nabla\delta_{t}\phi^{n+1}\|_\Omega^2 \\
	&+&\frac{B_1}{2}\| \delta_{t}\phi^{n+1}\|_\Omega^2-\frac{B_1}{2}\|\delta_{t}\phi^{n}\|_\Omega^2
+\frac{B_1}{2}\|\delta_{tt}\phi^{n+1}\|_\Omega^2\\
&\leq&-\frac{1}{\varepsilon}\left(F(\phi^{n+1})-F(\phi^{n}),1\right)_{\Omega}
+\frac{L_1}{2\varepsilon}\|\delta_{tt}\phi^{n+1}\|_\Omega^2+\frac{L_1}{2\varepsilon}\|\delta_{t}\phi^{n}\|_\Omega^2.
	\end{eqnarray*}
 Rewriting $\uppercase\expandafter{\romannumeral1}$ gives
 \begin{eqnarray}
&&\frac1{\varepsilon}\left(F(\phi^{n+1})-F(\phi^{n}),1\right)_{\Omega}+\frac{\varepsilon}{2}\left(\|
\nabla\phi^{n+1}\|_\Omega^2-\|\nabla\phi^{n}\|_\Omega^2\right)+\frac{1}{4\tau}\left(\|
 \delta_{t}\phi^{n+1}\|^2_{-1,\Omega}-\| \delta_{t}\phi^{n}\|^2_{-1,\Omega}\right)\nonumber\\
&-&\varepsilon\left(\partial_{\bold n}\phi^n,\delta_t\phi^{n+1}\right)_{\Gamma}-A_1\tau\left(\partial_{\bold n}(\phi^{n+1}
-\phi^{n}),\delta_{t}\phi^{n}\right)_{\Gamma}\nonumber\\
&+&\frac{L_1}{2\varepsilon}\left(\| \delta_{t}\phi^{n+1}\|_\Omega^2-\|
\delta_{t}\phi^{n}\|_\Omega^2\right)+\frac{B_1}{2}\left(\| \delta_{t}\phi^{n+1}
\|_\Omega^2-\| \delta_{t}\phi^{n}\|_\Omega^2\right)\nonumber\\
&\leq&-\frac{1}{4\tau}\| \delta_{tt}\phi^{n+1}\|^2_{-1,\Omega}-\frac{1}{\tau}\|
\delta_{t}\phi^{n+1}\|_{-1,\Omega}^2-\frac{\varepsilon}{2}\|\nabla\delta_{t}\phi^{n+1}
\|_\Omega^2-A_1\tau\|\nabla\delta_{t}\phi^{n+1}\|_\Omega^2\nonumber\\
&+&\frac{L_1}{2\varepsilon}\| \delta_{t}\phi^{n+1}\|_\Omega^2-\frac{B_1}{2}\|
\delta_{tt}\phi^{n+1}\|_\Omega^2+\frac{L_1}{2\varepsilon}\| \delta_{tt}\phi^{n+1}\|_\Omega^2.
\end{eqnarray}
	Similarly, integrating both sides of equation \eqref{scheme-5}, we get
\[
	\frac{1}{|\Gamma|}\int_{\Gamma}{\psi^{n+1}dx}=m_1,\,\,n=1,...N.\nonumber
\]
	Thus $\delta _t \psi^{n+1} \in L^2_0(\Gamma)$ for $n=0,\,1,\,\cdots N$.
	Pairing \eqref{scheme-5} with $(-\Delta_{\Gamma})^{-1}\delta _t\psi^{n+1}$ and adding to \eqref{scheme-6}
paired with $\delta _t\psi^{n+1}$, we have
	\begin{eqnarray}
&&	\left (\frac{3\psi^{n+1}-4\psi^n+\psi^{n-1}}{2\tau},(-\Delta_{\Gamma})^{-1}\delta _t\psi^{n+1}\right
)_{\Gamma}\nonumber\\
&=&\delta\kappa(\Delta_{\Gamma}\psi^{n+1},\delta _t \psi^{n+1})_{\Gamma}-\frac{1}{\delta}
(g(\hat{\psi}^{n+1}), \delta _t\psi^{n+1})_{\Gamma}
-\varepsilon(\partial_{\bold n}\phi^{n+1},\delta_t \psi^{n+1})_{\Gamma}-A_1\tau(\partial_{\bold n}(\phi^{n+1}-\phi^{n}),\delta _t \psi^{n+1})_{\Gamma}\nonumber\\
&&-A_2\tau\| \nabla_{\Gamma} \delta _t\psi^{n+1}\|^2 _{\Gamma}-B_2(\delta_{tt}\psi^{n+1},\delta_t\psi^{n+1})_{\Gamma}.\nonumber
\end{eqnarray}
	The integral of each part reads as follows:
	\begin{eqnarray}
&-&\left (\frac{3\psi^{n+1}-4\psi^n+\psi^{n-1}}{2\tau},(-\Delta_{\Gamma})^{-1}\delta _t\psi^{n+1}\right)_{\Gamma}\nonumber\\
&=&-\frac{1}{\tau}\|\delta_t\psi^{n+1}\|^2_{-1,\Gamma}-\frac{1}{4\tau}(\|\delta_t\psi^{n+1}\|^2_{-1,\Gamma}
-\|\delta_t\psi^{n}\|^2_{-1,\Gamma}+\|\delta_{tt}\psi^{n+1}\|^2_{-1,\Gamma}),
\label{energyproof5}
	\end{eqnarray}

	\begin{eqnarray}
\delta\kappa(\Delta_{\Gamma}\psi^{n+1},\delta_t\psi^{n+1})_{\Gamma}
&=&-\delta\kappa\left(\nabla_{\Gamma}\psi^{n+1},\nabla_\Gamma\delta_t\psi^{n+1}\right)_{\Gamma}\nonumber\\
&=&-\frac{\delta\kappa}{2}(\|\nabla_{\Gamma}\psi^{n+1}\|_{\Gamma}^2
-\|\nabla_{\Gamma}\psi^{n}\|_{\Gamma}^2+\|\nabla_{\Gamma}\delta_{t}\psi^{n+1}\|_{\Gamma}^2),
\label{energyproof6}
	\end{eqnarray}
	\begin{eqnarray}
	-\varepsilon(\partial_{\bold n}\phi^{n+1},\delta t \psi^{n+1})_{\Gamma}=-\varepsilon(\partial_{\bold n}\phi^{n+1},\delta _t
\phi^{n+1})_{\Gamma},\label{energyproof7}
	\end{eqnarray}
	\begin{eqnarray}
	-B_2(\delta_{tt}\psi^{n+1},\delta_{t}\psi^{n+1})_{\Gamma}=-\frac{B_2}{2}\|\delta_{t}\psi^{n+1}
\|_{\Gamma}^2+\frac{B_2}{2}\|\delta_{t}\psi^{n}\|_{\Gamma}^2-\frac{B_2}{2}\|\delta_{tt}\psi^{n+1}\|_{\Gamma}^2.
\label{energyproof8}
	\end{eqnarray}
	Expanding $G(\psi^{n+1})$ and $G(\psi^{n})$ at $\hat{\psi}^{n+1}=2\psi^{n}-\psi^{n-1}$ leads to
\[
G(\psi^{n+1})= G(\hat{\psi}^{n+1})+g(\hat{\psi}^{n+1})(\psi^{n+1}-\hat{\psi}^{n+1})
+\frac12g^\prime(\zeta_1^{n})(\psi^{n+1}-\hat{\psi}^{n+1})^2,
\]
and
\[
G(\psi^{n})=
G(\hat{\psi}^{n+1})+g(\hat{\psi}^{n+1})(\psi^{n}-\hat{\psi}^{n+1})+\frac12g^\prime(\zeta_2^{n})(\psi^{n}-\hat{\psi}^{n+1})^2,
\]
where $\zeta_1^{n}$ is between $\psi^{n+1}$ and $\hat{\psi}^{n+1}$, $\zeta_2^{n}$ is
between $\psi^{n}$ and $\hat{\psi}^{n+1}$. Substracting the above two equations and using the fact that
$\psi^{n+1}-\hat{\psi}^{n+1}=\delta_{tt}\psi^{n+1}$ and $\psi^{n}-\hat{\psi}^{n+1}=-\delta_{t}\psi^{n}$, we get
\begin{eqnarray}
G(\psi^{n+1})-G(\psi^{n})-g(\hat{\psi}^{n+1})\delta_{t}\psi^{n+1}&=&\frac12g'(\zeta_1^{n})
(\delta_{tt}\psi^{n+1})^2-\frac12g'(\zeta_2^{n})(\delta_{t}\psi^{n})^2 \nonumber\\
	&\leq& \frac{L_2}{2}| \delta_{tt}\psi^{n+1}|^2+\frac{L_2}{2}| \delta_{t}\psi^{n}|^2,
\label{energyproof9}
\end{eqnarray}
and
\begin{eqnarray}
-A_1\tau\left(\partial_{\bold n}(\phi^{n+1}-\phi^{n}),\delta_t\psi^{n+1}\right)_{\Gamma}
=-A_1\tau\left(\partial_{\bold n}(\phi^{n+1}-\phi^{n}),\delta_t\phi^{n+1}\right)_{\Gamma}.
\label{energyproof10}
\end{eqnarray}
Combining the result with \eqref{energyproof5}-\eqref{energyproof10}, we obtain
	\begin{eqnarray*}
	\uppercase\expandafter{\romannumeral2}&:=&-\frac{1}{\delta}\left(g(\hat{\psi}^{n+1}),\delta_{t}
\psi^{n+1}\right)_{\Gamma}\\
&=&\frac{1}{\tau}\| \delta_{t}\psi^{n+1}\|^2_{-1,\Gamma}+\frac{1}{4\tau}\left(\|
\delta_{t}\psi^{n+1}\|^2_{-1,\Gamma}-\|\delta_{t}\psi^{n}\|^2_{-1,\Gamma}+\|\delta_{tt}\psi^{n+1}\|^2_{-1,\Gamma}\right) \\
&+&\varepsilon(\partial_{\bold n}\phi^{n+1},\delta_{t}\phi^{n+1})_{\Gamma}+\frac{\delta\kappa}{2}\left(\|
\nabla_{\Gamma}\psi^{n+1}\|_{\Gamma}^2-\|\nabla_{\Gamma}\psi^{n}\|_{\Gamma}^2
+\|\nabla_{\Gamma}\delta_{t}\psi^{n+1}\|_{\Gamma}^2\right) \\
&+&A_1\tau\left(\partial_{\bold n}(\phi^{n+1}-\phi^{n}),\delta_t\phi^{n+1}\right)_{\Gamma}
+A_2\tau\|\nabla_{\Gamma}\delta_{t}\psi^{n+1}\|_{\Gamma}^2 \nonumber\\
	&+&\frac{B_2}{2}\|\delta_{t}\psi^{n+1}\|_{\Gamma}^2-\frac{B_2}{2}\|\delta_{t}\psi^{n}
\|_{\Gamma}^2+\frac{B_2}{2}\|\delta_{tt}\psi^{n+1}\|_{\Gamma}^2	 \\
&\leq&-\frac{1}{\delta}\left(G(\psi^{n+1})-G(\psi^{n}),1\right)_{\Gamma}+\frac{L_2}{2\delta}
\|\delta_{tt}\psi^{n+1}\|_{\Gamma}^2+\frac{L_2}{2\delta}\|\delta_{t}\psi^{n}\|_{\Gamma}^2.
	\end{eqnarray*}
Rewriting $\uppercase\expandafter{\romannumeral2}$ yields
\begin{eqnarray}	
&&\frac1{\delta}\left(G(\psi^{n+1})-G(\psi^{n}),1\right)_{\Gamma}+\frac{\delta\kappa}{2}\left(\|
\nabla_{\Gamma}\psi^{n+1}\|_{\Gamma}^2-\|\nabla_{\Gamma}\psi^{n}\|_{\Gamma}^2\right)
+\frac{1}{4\tau}\left(\| \delta_{t}\psi^{n+1}\|^2_{-1,\Gamma}-\|
\delta_{t}\psi^{n}\|^2_{-1,\Gamma}\right)\nonumber\\
&+&\varepsilon(\partial_{\bold n}\phi^{n+1},\delta_t\phi^{n+1})_{\Gamma}+A_1\tau\left(\partial_{\bold n}(\phi^{n+1}-\phi^{n}),
\delta_{t}\phi^{n}\right)_{\Gamma}\nonumber\\
&+&\frac{L_2}{2\delta}\left(\| \delta_{t}\psi^{n+1}\|_{\Gamma}^2-\|
\delta_{t}\psi^{n}\|_{\Gamma}^2\right)+\frac{B_2}{2}\left(\| \delta_{t}\psi^{n+1}
\|_{\Gamma}^2-\| \delta_{t}\psi^{n}\|_{\Gamma}^2\right)\nonumber\\
&\leq&-\frac{1}{4\tau}\| \delta_{tt}\psi^{n+1}\|^2_{-1,\Gamma}-\frac{1}{\tau}\|
\delta_{t}\psi^{n+1}\|_{-1,\Gamma}^2-\frac{\delta\kappa}{2}\|\nabla_{\Gamma}\delta_{t}\psi^{n+1}
\|_{\Gamma}^2-A_2\tau\|\nabla_{\Gamma}\delta_{t}\psi^{n+1}\|_{\Gamma}^2\nonumber\\
&+&\frac{L_2}{2\delta}\| \delta_{t}\psi^{n+1}\|_{\Gamma}^2-\frac{B_2}{2}\| \delta_{tt}\psi^{n+1}\|_{\Gamma}^2+\frac{L_2}{2\delta}\| \delta_{tt}\psi^{n+1}\|_{\Gamma}^2.
\end{eqnarray}
Noticing the facts that
\[
\chi_1 \|\nabla\delta_t\phi^{n+1}\|_{\Omega}^2+\frac{\alpha_2}{\tau}\|\delta_t\phi^{n+1}
\|^2_{-1,\Omega}\geq2\sqrt{\frac{\chi_1\alpha_2}{\tau}}\|\delta_t\phi^{n+1}\|_{\Omega}^2,	
\]
and
\[
\chi_2 \|\nabla_\Gamma\delta_t\psi^{n+1}\|_{\Gamma}^2+\frac{\tilde{\alpha}_2}{\tau}\|\delta_t\psi^{n+1}
\|^2_{-1,\Gamma}\geq2\sqrt{\frac{\chi_2\tilde{\alpha}_2}{\tau}}\|\delta_t\psi^{n+1}\|_{\Gamma}^2,	
\]
with $\displaystyle\chi_1=A_1\tau+\frac{\alpha_1\varepsilon}{2}$,\,$\displaystyle\chi_2=A_2\tau+\frac{\tilde{\alpha}_1\delta\kappa}{2}$,\, $0\leq\alpha_1,\tilde{\alpha}_1\leq 1$,\, $0<\alpha_2,\tilde{\alpha}_2\leq 1$, we have
\[
-\left(\frac{\alpha_1\epsilon}{2}+A_1\tau\right)\|\nabla\delta_t\phi^{n+1}\|_{\Omega}^2
-\frac{\alpha_2}{\tau}\|\delta_t\phi^{n+1}\|^2_{-1,\Omega}
\leq -2\sqrt{\left(\frac{\alpha_1\varepsilon}{2\tau}+A_1\right)\alpha_2}\|\delta_t\psi^{n+1}\|_{\Gamma}^2,	
\]
and
\[
-\left(\frac{\tilde{\alpha}_1\delta\kappa}{2}+A_2\tau\right)\|\nabla_{\Gamma}\delta_t\psi^{n+1}\|_{\Gamma}^2
-\frac{\tilde{\alpha}_2}{\tau}\|\delta_t\psi^{n+1}\|^2_{-1,\Gamma}
\leq -2\sqrt{\left(\frac{\tilde{\alpha}_1\delta\kappa}{2\tau}+A_2\right)\tilde{\alpha}_2}\|\delta_t\phi^{n+1}\|_{\Gamma}^2.
\]	
For simplicity, let $\alpha_1=\tilde{\alpha}_1$,\, $\alpha_2=\tilde{\alpha}_2$ in the following.

Combining the above equations and the inequalities we get
\begin{eqnarray}
&&\frac1{\varepsilon}\left(F(\phi^{n+1})-F(\phi^{n}),1\right)_{\Omega}+\frac{\varepsilon}{2}\left(\|
 \nabla\phi^{n+1}\|_{\Omega}^2-\|\nabla\phi^{n}\|_{\Omega}^2\right)+\frac{1}{4\tau}\left(\|
 \delta_{t}\phi^{n+1}\|^2_{-1,\Omega}-\| \delta_{t}\phi^{n}\|^2_{-1,\Omega}\right)\nonumber\\
&+&\frac{L_1}{2\varepsilon}\left(\| \delta_{t}\phi^{n+1}\|_{\Omega}^2-\|
\delta_{t}\phi^{n}\|_{\Omega}^2\right)+\frac{B_1}{2}\left(\| \delta_{t}\phi^{n+1}
\|_{\Omega}^2-\| \delta_{t}\phi^{n}\|_{\Omega}^2\right)\nonumber\\
&+&\frac1{\delta}\left(G(\psi^{n+1})-G(\psi^{n}),1\right)_{\Gamma}+\frac{\delta\kappa}{2}\left(\|
\nabla_{\Gamma}\psi^{n+1}\|_{\Gamma}^2-\|\nabla_{\Gamma}\psi^{n}\|_{\Gamma}^2\right)
+\frac{1}{4\tau}\left(\| \delta_{t}\psi^{n+1}\|^2_{-1,\Gamma}-\|
\delta_{t}\psi^{n}\|^2_{-1,\Gamma}\right)\nonumber\\
&+&\frac{L_2}{2\delta}\left(\| \delta_{t}\psi^{n+1}\|_{\Gamma}^2-\|
\delta_{t}\psi^{n}\|_{\Gamma}^2\right)+\frac{B_2}{2}\left(\| \delta_{t}\psi^{n+1}
\|_{\Gamma}^2-\| \delta_{t}\psi^{n}\|_{\Gamma}^2\right)\nonumber\\
&\leq&-\frac{1}{4\tau}\| \delta_{tt}\phi^{n+1}\|^2_{-1,\Omega}-\frac{1}{4\tau}\|
\delta_{tt}\psi^{n+1}\|^2_{-1,\Gamma}\nonumber\\
&-&(1-\alpha_1)(\frac{\varepsilon}{2}\| \nabla\delta_{t}\phi^{n+1}\|_\Omega^2+\frac{\delta\kappa}{2}\|\nabla_{\Gamma}
\delta_{t}\psi^{n+1}\|_{\Gamma}^2)-(1-\alpha_2)\frac{1}{\tau}(\|  \delta_{t}\phi^{n+1}\|_{-1,\Omega}^2
	+\|\delta_{t}\psi^{n+1}\|_{-1,\Gamma}^2)\nonumber\\
&+&\left(-2\sqrt{(\frac{\alpha_1\varepsilon}{2\tau}+A_1)\alpha_2}+\frac{L_1}{2\varepsilon}
\right)\|\delta_t\phi^{n+1}\|_\Omega^2+\left(-\frac{B_1}{2}+\frac{L_1}{2\varepsilon}\right)
\|\delta_{tt}\phi^{n+1}\|_\Omega^2\nonumber\\
&+&\left(-2\sqrt{(\frac{\alpha_1\delta\kappa}{2\tau}+A_1)\alpha_2}+\frac{L_2}{2\delta}\right)
\|\delta_t\psi^{n+1}\|_{\Gamma}^2+\left(-\frac{B_2}{2}+\frac{L_2}{2\delta}\right)
\|\delta_{tt}\psi^{n+1}\|_{\Gamma}^2.\nonumber
\end{eqnarray}
Then under the conditions \eqref{StaCon1}-\eqref{StaCon3}, for the modified energy \eqref{EngMod}, the estimate \eqref{energy} holds.
\end{proof}

\begin{rem}
We can see that the BDF2 scheme \eqref{scheme-1}-\eqref{scheme-6} is conditionally stable if we take $\alpha_1=\alpha_2=1$ and the 
constraint on time step is 
\[
\tau \leq \min\left\{\frac{8\epsilon^3}{L_1^2},\, \frac{8\delta^3\kappa}{L_2^2}\right\}.
\]
If we set the artificial parameters as \eqref{StaCon1}-\eqref{StaCon3}, then the scheme is unconditionally stable, which implies the 
stabilizers $A_1$ and $A_2$ play an important role in order to obtain an unconditionally energy stable scheme.
\end{rem}

\section{Convergence analysis}\label{sec4}
We will establish the error estimate of the semi-discretized BDF2 scheme for the Cahn-Hilliard model with
dynamic boundary conditions in the norm of $l^\infty(0,T;H^{-1})\cap l^2(0,T;H^{1})$.
Let $\phi(t^n)$, $\psi(t^n)$ be the exact solution at time $t=t^n$ to equation \eqref{LWmodel} and $\phi^n$, $\psi^n$ be the solution
at time $t=t^n$ to the numerical scheme \eqref{scheme-1}-\eqref{scheme-6}.
Define the error functions  $e_{\phi}^n=\phi^n-\phi(t^n)$, $e_{\psi}^n=\psi^n-\psi(t^n)$, $e_{\mu}^n=\mu^n-\mu(t^n)$,
$e_{\Gamma}^n=\mu_\Gamma^n-\mu_\Gamma(t^n)$.
Because the integrals of $\phi^n$ and $\psi^n$ are conserved, $\delta_t\phi^n$ belongs to $L_0^2(\Omega)$
and $\delta_t\psi^n$ belongs to $L_0^2(\Gamma)$. This fact makes the $H^{-1}$ norms of $e^n_\phi$ and $e^n_\psi$ are well-defined in $H^{-1}(\Omega)$ and  $H^{-1}(\Gamma)$, respectively.
Before presenting the detailed error analysis, we need to give an Assumption.
\begin{assumption}
\label{assumption3}
	Assume that there exist two constants $C_0$ and $C_1$ independent of $\tau$, such that
\[
\|e_{\phi}^{1}\|^2_{-1}+\varepsilon\tau\|\nabla e_{\phi}^{1}\|^2\leq C_0\tau^4,
\]
and
\[
\|e_{\psi}^{1}\|^2_{-1}+\delta\kappa\tau\|\nabla_{\Gamma} e_{\psi}^{1}\|^2\leq C_1\tau^4.
\]
\end{assumption}
We have the error estimate as follows.

\begin{thm}
\label{theorem2}
	Suppose that the exact solutions $(\phi,\psi,\mu,\mu_{\Gamma})$ are sufficiently smooth and
Assumption \ref{assumption1}, \ref{assumption2} and \ref{assumption3} hold. Then $\forall \tau \leq1$, we have the following error estimate for
the BDF2 scheme \eqref{scheme-1}-\eqref{scheme-6}:
	\begin{eqnarray*}	
		&&\max_{1\leq n\leq N}\{\| e_{\phi}^{n+1}\|^2_{-1,\Omega}+\|
2e_{\phi}^{n+1}-e_{\phi}^{n}\|^2_{-1,\Omega}+2A_1\tau^2\| \nabla e_{\phi}^{n+1}
\|_\Omega^2\nonumber\\
		&&+\| e_{\psi}^{n+1}\|^2_{-1,\Gamma}+\|
2e_{\psi}^{n+1}-e_{\psi}^{n}\|^2_{-1,\Gamma}+2A_2\tau^2\| \nabla_{\Gamma}
e_{\psi}^{n+1}\|_{\Gamma}^2\}\nonumber\\
		&+&\sum^N_{n=1}(2A_1\tau^2\| \delta_t \nabla e_{\phi}^{n+1}\|_\Omega^2
+\tau\varepsilon\|\nabla e_{\phi}^{n+1}\|_\Omega^2+\| \delta_{tt}
e_{\phi}^{n+1}\|^2_{-1,\Omega}+4B_1\tau\| e_{\phi}^{n+1}\|_\Omega^2\nonumber\\
		&+&2A_2\tau^2\| \delta_t \nabla_{\Gamma} e_{\psi}^{n+1}\|_{\Gamma}^2
+\tau\delta\kappa\|\nabla_{\Gamma} e_{\psi}^{n+1}\|_{\Gamma}^2+\| \delta_{tt}
e_{\psi}^{n+1}\|^2_{-1,\Gamma}+4B_2\tau\| e_{\psi}^{n+1}\|_{\Gamma}^2)\nonumber\\
		&\leq& \exp\left((C_8\varepsilon^{-3}+C_9\delta^{-3})T\right)\left(C_{10}\varepsilon^{-1}+C_{11}\delta^{-1}
		+C_0(5+2A_1\varepsilon^{-1}\tau)+C_1(5+2A_2\delta^{-1}\kappa^{-1}\tau)\right)\tau^4.\label{convergence}
	\end{eqnarray*}
	where $C_8$, $C_9$, $C_{10}$, $C_{11}$ are four constants that can be uniformly bounded independent
of $\varepsilon$, $\delta$, $\kappa$ and $\tau$.
\end{thm}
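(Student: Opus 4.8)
The plan is to run a discrete energy estimate on the \emph{error equations}, in close parallel with the proof of Theorem~\ref{thm1}, and to close with a discrete Gronwall inequality. First I would insert the exact solution $(\phi,\psi,\mu,\mu_\Gamma)$ into the scheme \eqref{scheme-1}--\eqref{scheme-6} and define the consistency residuals $R_\phi^{n+1},R_\mu^{n+1},R_\psi^{n+1},R_\Gamma^{n+1}$ as the amounts by which it fails to be satisfied. By Taylor expansion and the assumed smoothness, the BDF2 quotient $\frac{3\phi(t^{n+1})-4\phi(t^n)+\phi(t^{n-1})}{2\tau}-\phi_t(t^{n+1})$, the extrapolation defect $2\phi(t^n)-\phi(t^{n-1})-\phi(t^{n+1})$, and the stabilizing increments $A_1\tau\Delta(\phi(t^{n+1})-\phi(t^n))$, $B_1\delta_{tt}\phi(t^{n+1})$ (together with the $\Gamma$-analogues) are all $O(\tau^2)$ in the relevant norms, so all residuals are $O(\tau^2)$, the chemical-potential ones carrying the natural $\varepsilon^{-1}$, resp.\ $\delta^{-1}$, $\kappa^{-1}$ weights. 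Subtracting the scheme from these identities gives the error system for $(e_\phi^n,e_\psi^n,e_\mu^n,e_\Gamma^n)$; its only genuinely nonlinear term is $\frac1\varepsilon\big(f(2\phi^n-\phi^{n-1})-f(2\phi(t^n)-\phi(t^{n-1}))\big)$ (and the $g$-analogue on $\Gamma$), the remainder of the $f$-difference being absorbed into $R_\mu^{n+1}$. Since the discrete masses of $\phi^n$ and of $\phi(t^n)$ coincide, $e_\phi^n\in L^2_0(\Omega)$ and $e_\psi^n\in L^2_0(\Gamma)$, so all $H^{-1}$-norms below are meaningful.

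Next I would pair the bulk mass-error equation with $(-\Delta)^{-1}e_\phi^{n+1}$, the bulk chemical-potential-error equation with $e_\phi^{n+1}$, and add; then do the same on $\Gamma$ with $(-\Delta_\Gamma)^{-1}e_\psi^{n+1}$ and $e_\psi^{n+1}$, and sum the two. As in Theorem~\ref{thm1}, the boundary couplings $\varepsilon\partial_{\bold n}e_\phi^{n+1}$ and $A_1\tau\partial_{\bold n}\delta_t e_\phi^{n+1}$ cancel between the bulk and surface computations because $e_\phi^{n+1}|_\Gamma=e_\psi^{n+1}$, while $\partial_{\bold n}e_\mu^{n+1}=0$ and $\partial_{\bold n}(-\Delta)^{-1}e_\phi^{n+1}=0$ kill the boundary integrals produced by integrating $\Delta e_\mu^{n+1}$ by parts. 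Using the identity $\big(\frac{3h^{n+1}-4h^n+h^{n-1}}{2\tau},h^{n+1}\big)=\frac1{4\tau}\big(\|h^{n+1}\|^2+\|2h^{n+1}-h^n\|^2-\|h^n\|^2-\|2h^n-h^{n-1}\|^2+\|\delta_{tt}h^{n+1}\|^2\big)$ with the $H^{-1}$ inner product, the time differences telescope into $\|e_\phi^{n+1}\|_{-1,\Omega}^2+\|2e_\phi^{n+1}-e_\phi^n\|_{-1,\Omega}^2$ plus the dissipation $\frac1{4\tau}\|\delta_{tt}e_\phi^{n+1}\|_{-1,\Omega}^2$; the $-\varepsilon\Delta e_\phi^{n+1}$ term, tested against $e_\phi^{n+1}$ itself, produces the \emph{undifferenced} dissipation $\varepsilon\|\nabla e_\phi^{n+1}\|_\Omega^2$; the $A_1$-stabilizer gives the telescoping $\frac{A_1\tau}2\|\nabla e_\phi^{n+1}\|_\Omega^2$ plus the dissipation $\frac{A_1\tau}2\|\nabla\delta_t e_\phi^{n+1}\|_\Omega^2$; and the $B_1$-stabilizer contributes, after an elementary rearrangement of $B_1(\delta_{tt}e_\phi^{n+1},e_\phi^{n+1})_\Omega$, a positive multiple of $\|e_\phi^{n+1}\|_\Omega^2$ up to past-level terms. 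Analogous terms arise on $\Gamma$ with $\delta\kappa,A_2,B_2$. The outcome is a one-step inequality $\mathcal E^{n+1}+\mathcal D^{n+1}\le\mathcal E^{n}+(\text{cross terms})+(\text{residual terms})$, in which $4\tau\,\mathcal E^{n+1}$ equals the bracketed quantity under the maximum in the asserted estimate and $4\tau\sum_n\mathcal D^{n+1}$ equals its dissipation sum.

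It remains to absorb the two families of lower-order terms. For the nonlinearity, Assumption~\ref{assumption1} gives $|f(2\phi^n-\phi^{n-1})-f(2\phi(t^n)-\phi(t^{n-1}))|\le L_1|2e_\phi^n-e_\phi^{n-1}|$ pointwise, so the nonlinear contribution is bounded by $\frac{L_1}\varepsilon\|2e_\phi^n-e_\phi^{n-1}\|_\Omega\,\|e_\phi^{n+1}\|_\Omega$; applying the interpolation inequality $\|v\|_\Omega^2\le C\|v\|_{-1,\Omega}\|\nabla v\|_\Omega$ to each $L^2$-factor and then Young's inequality, I send the $\|\nabla\cdot\|_\Omega^2$ parts at level $n+1$ into $\varepsilon\|\nabla e_\phi^{n+1}\|_\Omega^2$ (and, after re-indexing the sum, the level-$n,n-1$ copies into the same dissipation), leaving terms of size $C\varepsilon^{-3}L_1^2\big(\|e_\phi^{n+1}\|_{-1,\Omega}^2+\|e_\phi^n\|_{-1,\Omega}^2+\|e_\phi^{n-1}\|_{-1,\Omega}^2\big)$ — this is where the $\varepsilon^{-3}$ (and, from $g$, the $\delta^{-3}$) enters; since $L_1,L_2$ are absolute, these read $C_8\varepsilon^{-3}$, $C_9\delta^{-3}$. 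Choosing the Young weights so that all level-$(n+1)$ contributions of the cross terms are genuinely absorbed into $\mathcal D^{n+1}$ leaves only past levels in the Gronwall sum. For the residuals, testing $R_\phi^{n+1}$ against $(-\Delta)^{-1}e_\phi^{n+1}$ and $R_\mu^{n+1}$ against $e_\phi^{n+1}$ (with the $\Gamma$-analogues) and applying Young, the quadratic-error factors are absorbed and $\sum_{n=1}^N\|R_\star^{n+1}\|^2\le N\max_n\|R_\star^{n+1}\|^2=O(\tau^{-1})O(\tau^4)=O(\tau^3)$, hence $O(\tau^4)$ after the $4\tau$ normalization; the $\varepsilon^{-1},\delta^{-1},\kappa^{-1}$ weights in the residuals give the $C_{10}\varepsilon^{-1}+C_{11}\delta^{-1}$. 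Summing over $n=1,\dots,m$ for arbitrary $m\le N$, bounding $\mathcal E^1$ by Assumption~\ref{assumption3} (with $e_\phi^0=0$, $e_\psi^0=0$, so $4\tau\mathcal E^1\le C_0(5+2A_1\varepsilon^{-1}\tau)\tau^4+C_1(5+2A_2\delta^{-1}\kappa^{-1}\tau)\tau^4$), dividing by $4\tau$, applying the discrete Gronwall inequality with coefficient $C_8\varepsilon^{-3}+C_9\delta^{-3}$, and taking the maximum over $m$ on the left, yields the asserted estimate.

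The step I expect to be the main obstacle is controlling the nonlinear term \emph{within the $H^{-1}$/$H^1$ energy framework}: the natural energy measures $e_\phi$ only in $H^{-1}$ while the dissipation measures $\nabla e_\phi$ only in $L^2$ with the small weight $\varepsilon$ (resp.\ $\delta\kappa$), so the $L^2$-norm factors produced by the Lipschitz bound on $f,g$ must be interpolated and split so that every piece is either absorbed into $\varepsilon\|\nabla e_\phi^{n+1}\|_\Omega^2$, $B_1\|e_\phi^{n+1}\|_\Omega^2$, $\delta\kappa\|\nabla_\Gamma e_\psi^{n+1}\|_\Gamma^2$, $B_2\|e_\psi^{n+1}\|_\Gamma^2$, or becomes a Gronwall term in the $H^{-1}$ energy, all while keeping exact track of the powers of $\varepsilon,\delta,\kappa$; a secondary subtlety is that BDF2 is two-step, so the recursion must be launched from the two given levels (hence Assumption~\ref{assumption3}), and the quantity $\|e_\phi^{n+1}\|_{-1}^2+\|2e_\phi^{n+1}-e_\phi^n\|_{-1}^2$ must be recognized as equivalent to $\|e_\phi^{n+1}\|_{-1}^2+\|e_\phi^n\|_{-1}^2$ to close the Gronwall recursion.
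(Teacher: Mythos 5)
Your proposal follows essentially the same route as the paper's proof: derive the error equations from a consistency expansion, test with $(-\Delta)^{-1}e_\phi^{n+1}$ and $e_\phi^{n+1}$ (and the $\Gamma$-analogues), exploit the BDF2 telescoping identity and the bulk--surface trace cancellations, absorb cross terms by Young's inequality with weights of size $\varepsilon$ and $\delta$, and close with the discrete Gronwall inequality launched from Assumption \ref{assumption3}, recovering the same $\varepsilon^{-3}$, $\delta^{-3}$ Gronwall coefficients, the $O(\tau^4)$ residual total, and the constants $C_0(5+2A_1\varepsilon^{-1}\tau)$, $C_1(5+2A_2\delta^{-1}\kappa^{-1}\tau)$ from the first step. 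The only departures are cosmetic: you compare $f$ at the numerical versus exact extrapolation (pushing the rest into the consistency residual) instead of comparing with $f(\phi(t^{n+1}))$ and splitting off $\delta_{tt}\phi(t^{n+1})$, and you handle the resulting product via $\|v\|_\Omega^2\le C\|v\|_{-1,\Omega}\|\nabla v\|_\Omega$ rather than the paper's direct $H^{-1}$--$H^1$ duality, which changes nothing in the final bookkeeping.
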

\begin{proof}
A careful consistency analysis implies that
\begin{numcases}{}
\frac{\frac{3}{2}\phi(t^{n+1})-2\phi(t^{n})+\frac{1}{2}\phi(t^{n-1})}{\tau}=\Delta \mu(t^{n+1})+R_{\phi}^{n+1}, &
$\mathbf{x}\in\Omega,$  \label{consistent-1}\\
\mu^{n+1}=-\varepsilon \Delta \phi(t^{n+1})
+\frac{1}{\varepsilon}f\left(\phi(t^{n+1})\right)-A_1\tau\Delta\left(\phi(t^{n+1})-\phi(t^{n})\right)\nonumber\\
\quad\quad\quad+B_1\left(\phi(t^{n+1})-2\phi(t^{n})+\phi(t^{n-1})\right)+R_\mu^{n+1},     &$\mathbf{x}\in
 \Omega,$\label{consistent-2}\\
\partial_{\bold n} \mu(t^{n+1})=0,                   & $\mathbf{x}\in \Gamma,$ \label{consistent-3}\\
\phi(t^{n+1})|_\Gamma=\psi(t^{n+1}),     &$\mathbf{x}\in \Gamma,$  \label{consistent-4}\\
\frac{\frac{3}{2}\psi(t^{n+1})-2\psi(t^{n})+\frac{1}{2}\psi(t^{n-1})}{\tau}=\Delta_{\Gamma}
\mu_\Gamma(t^{n+1})+R_\psi^{n+1},    & $\mathbf{x}\in \Gamma,$  \label{consistent-5}\\
\mu_\Gamma(t^{n+1})=-\delta\kappa \Delta_\Gamma \psi(t^{n+1})
+\frac{1}{\delta}g\left(\psi(t^{n+1})\right)+\varepsilon\partial_{\bold n}
\phi(t^{n+1})-A_2\tau\Delta_\Gamma\left(\psi(t^{n+1})-\psi(t^{n})\right) \nonumber\\
\,\,\,\quad\quad\quad+B_2\left(\psi(t^{n+1})-2\psi(t^{n})+\psi(t^{n-1})\right)
+A_1\tau\partial_{\bold n}\left(\phi(t^{n+1})-\phi(t^{n})\right)+R_\Gamma^{n+1}, &$\mathbf{x}\in \Gamma,$ \label{consistent-6}
\end{numcases}
where the residual terms are
\begin{eqnarray*}
&&R_\phi^{n+1}=\frac{\frac{3}{2}\phi(t^{n+1})-2\phi(t^{n})+\phi(t^{n-1})}{\tau}-\phi_t(t^{n+1}),\nonumber\\
&&R_\psi^{n+1}=\frac{\frac{3}{2}\psi(t^{n+1})-2\psi(t^{n})+\psi(t^{n-1})}{\tau}-\psi_t(t^{n+1}),\\
&&R_\mu^{n+1}=A_1\tau\Delta\left(\phi(t^{n+1})-\phi(t^{n})\right)-B_1\left(\phi(t^{n+1})-2\phi(t^{n})+\phi(t^{n-1})\right),\\
&&R_\Gamma^{n+1}=A_2\tau\Delta\left(\psi(t^{n+1})-\psi(t^{n})\right)-B_2\left(\psi(t^{n+1})-2\psi(t^{n})+\psi(t^{n-1})\right)
-A_1\tau\partial_{\bold n}\left(\phi(t^{n+1})-\phi(t^{n})\right).
\end{eqnarray*}
For simplicity, we define
\begin{eqnarray*}	
&&R_1^{n+1}=\phi(t^{n+1})-2\phi(t^{n})+\phi(t^{n-1}),\\
&&R_2^{n+1}=\tau\left(\phi(t^{n+1})-\phi(t^{n})\right),\\
&&R_3^{n+1}=\psi(t^{n+1})-2\psi(t^{n})+\psi(t^{n-1}),\\
&&R_4^{n+1}=\tau\left(\psi(t^{n+1})-\psi(t^{n})\right).
\end{eqnarray*}	
By substracting \eqref{consistent-1}-\eqref{consistent-6} from the corresponding scheme \eqref{scheme-1}-\eqref{scheme-6},
we derive the error equations as follows,	
	\begin{numcases}{}
	\frac{\frac{3}{2}e_{\phi}^{n+1}-2e_{\phi}^{n}+\frac{1}{2}e_{\phi}^{n-1}}{\tau}=\Delta e_\mu^{n+1}-R_{\phi}^{n+1},        & $\mathbf{x}\in\Omega,$ \label{error-1}\\
	e_\mu^{n+1}=-\varepsilon \Delta e_\phi^{n+1}
+\frac{1}{\varepsilon}\left(f(2\phi^{n}-\phi^{n-1})-f(\phi(t^{n+1}))\right)-A_1\tau\Delta\delta_ te_\phi^{n+1}\nonumber\\
\quad\quad\quad+B_1\delta_ {tt}e_\phi^{n+1}-A_1\Delta R_2^{n+1}+B_1R_1^{n+1},&$\mathbf{x}\in \Omega,$\label{error-2}\\
	\partial_{\bold n} e_\mu^{n+1}=0,                   & $\mathbf{x}\in \Gamma,$  \label{error-3}\\
	e_\phi^{n+1}|_\Gamma=e_\psi^{n+1},     &$\mathbf{x}\in \Gamma,$ \label{error-4}\\
	\frac{\frac{3}{2}e_\psi^{n+1}-2e_\psi^{n}+\frac{1}{2}e_\psi^{n-1}}{\tau}=\Delta_{\Gamma}
 e_\Gamma^{n+1}-R_\psi^{n+1},    & $\mathbf{x}\in \Gamma,$  \label{error-5}\\
	e_\Gamma ^{n+1}=-\delta\kappa \Delta_\Gamma e_\psi^{n+1}
	+\frac{1}{\delta}\left(g(2\psi^{n}-\psi^{n-1})-g(\psi(t^{n+1}))\right)+\varepsilon\partial_{\bold n}
e_\phi^{n+1}-A_2\tau\Delta_\Gamma\delta_t e_\psi^{n+1}  \nonumber\\
	\,\,\,\quad\quad\quad+B_2\delta_{tt} e_\psi^{n+1}+A_1\tau\partial_{\bold n}\delta_t e_\phi^{n+1}
-A_2\Delta_\Gamma R_4^{n+1}+B_2 R_3^{n+1}+A_1\partial_{\bold n} R_2^{n+1} ,&$\mathbf{x}\in \Gamma$. \label{error-6}
\end{numcases}
Pairing \eqref{error-1} with $(-\Delta)^{-1}e_\phi^{n+1}$ and adding to \eqref{error-2} paired with $-e_\phi^{n+1}$, we have
\begin{eqnarray}
&&\left(\frac{\frac{3}{2}e_{\phi}^{n+1}-2e_{\phi}^{n}+\frac{1}{2}e_{\phi}^{n-1}}{\tau},
(-\Delta)^{-1}e_\phi^{n+1}\right)_{\Omega}-\varepsilon(\Delta e_\phi^{n+1},e_\phi^{n+1})_{\Omega}
-A_1\tau(\Delta \delta_t e_\phi^{n+1},e_\phi^{n+1})_{\Omega}\nonumber\\
&=&(R_\phi^{n+1},(-\Delta)^{-1}\delta _te_\phi^{n+1})_{\Omega}-B_1(R_1^{n+1},e_\phi^{n+1})_{\Omega}+A_1(\Delta
R_2^{n+1},e_\phi^{n+1})_{\Omega}\nonumber\\
&-&B_1(\delta_{tt}e_\phi^{n+1},e_\phi^{n+1})_{\Omega}-\frac{1}{\varepsilon}
\left(f(2\phi^n-\phi^{n-1})-f(\phi(t^{n+1})),e_\phi^{n+1}\right)_{\Omega}\nonumber\\
&:=&J_1+J_2+J_3+J_4+J_5.
\label{convergenceproof1}
\end{eqnarray}
The left hand side of \eqref{convergenceproof1} can be estimated term by term as below:
\begin{eqnarray}
&&\left(\frac{\frac{3}{2}e_{\phi}^{n+1}-2e_{\phi}^{n}+\frac{1}{2}e_{\phi}^{n-1}}{\tau},
(-\Delta)^{-1}e_\phi^{n+1}\right)_{\Omega}\nonumber\\
&=&\frac{1}{4\tau}(\| e_\phi^{n+1}\|^2_{-1,\Omega}+\|
2e_\phi^{n+1}-e_\phi^{n}\|^2_{-1,\Omega})\nonumber\\
&-&\frac{1}{4\tau}(\| e_\phi^{n}\|^2_{-1,\Omega}+\|
2e_\phi^{n}-e_\phi^{n-1}\|^2_{-1,\Omega})+\frac{1}{4\tau}\| \delta_{tt}
e_\phi^{n+1}\|^2_{-1,\Omega},\label{convergenceproof2}
\end{eqnarray}
\begin{eqnarray}
-\varepsilon(\Delta e_\phi^{n+1},e_\phi^{n+1})_{\Omega}=-\varepsilon(\partial_{\bold n}
e_\phi^{n+1},e_\phi^{n+1})_{\Gamma}+\varepsilon\|\nabla e_\phi^{n+1}
\|_\Omega^2,\label{convergenceproof3}
\end{eqnarray}
\begin{eqnarray}
&&-A_1\tau(\Delta\delta_te_\phi^{n+1},e_\phi^{n+1})_{\Omega}=-A_1\tau(\partial_{\bold n}\delta_te_\phi^{n+1},
e_\phi^{n+1})_{\Gamma}+A_1\tau(\delta_t\nabla e_\phi^{n+1},\nabla e_\phi^{n+1})_{\Omega}\nonumber\\
&=&-A_1\tau(\partial_{\bold n}\delta_te_\phi^{n+1},e_\phi^{n+1})_{\Gamma}+\frac12A_1\tau(\|\nabla
e_\phi^{n+1}\|_\Omega^2-\|\nabla e_\phi^{n}\|_\Omega^2+\|\delta_t\nabla
e_\phi^{n+1}\|_\Omega^2).\label{convergenceproof4}
\end{eqnarray}
Next, we estimate the terms on the right hand side of \eqref{convergenceproof1}
\begin{eqnarray}
J_1=-(R_\phi^{n+1},(-\Delta)^{-1} e_\phi^{n+1})_{\Omega}\leq
\frac{1}{\eta_1}\|\Delta^{-1}R_\phi^{n+1}\|^2_{-1,\Omega}+\frac{\eta_1}{4}\| \nabla
e_\phi^{n+1}\|_\Omega^2,\label{convergenceproof5}
\end{eqnarray}
\begin{eqnarray}
J_2=-B_1(R_1^{n+1},e_\phi^{n+1})_{\Omega}\leq \frac{B_1^2}{\eta_1}\|
R_1^{n+1}\|^2_{-1,\Omega}+\frac{\eta_1}{4}\| \nabla e_\phi^{n+1}
\|_\Omega^2,\label{convergenceproof6}
\end{eqnarray}
\begin{eqnarray}
J_3&=&A_1(\Delta R_2^{n+1},e_\phi^{n+1})_{\Omega}= A_1(\partial_{\bold n}R_2^{n+1},e_\phi^{n+1})_{\Gamma}
-A_1(\nabla R_2^{n+1},\nabla e_\phi^{n+1})_{\Omega}\nonumber\\
&\leq& \frac{A_1^2}{\eta_1}\| \nabla R_2^{n+1}\|_\Omega^2+\frac{\eta_1}{4}\| \nabla
e_\phi^{n+1}\|_\Omega^2+A_1(\partial_{\bold n}R_2^{n+1},e_\phi^{n+1})_{\Gamma},\label{convergenceproof7}
\end{eqnarray}

\begin{eqnarray}
J_4&=&-B_1(\delta_{tt}e_\phi^{n+1},e_\phi^{n+1})_{\Omega}=
-B_1\left(e_\phi^{n+1}-(2e_\phi^{n}-e_\phi^{n-1}),e_\phi^{n+1}\right)_{\Omega}\nonumber\\
&\leq& -B_1 \| e_\phi^{n+1}\|_\Omega^2+\frac{B_1^2}{\eta_1}\|
 2e_\phi^{n}-e_\phi^{n-1}\|^2_{-1,\Omega}+\frac{\eta_1}{4}\| \nabla
 e_\phi^{n+1}\|_\Omega^2,\label{convergenceproof8}
\end{eqnarray}
\begin{eqnarray}
	J_5&=&-\frac{1}{\varepsilon}\left(f(2\phi^n-\phi^{n-1})-f(\phi(t^{n+1})),e_\phi^{n+1}\right)_{\Omega}\nonumber\\
	&\leq&\frac{K_1}{\varepsilon}(| 2\phi^n-\phi^{n-1}-\phi(t^{n+1})|,| e_\phi^{n+1}|)_{\Omega}\nonumber\\
	&=&\frac{K_1}{\varepsilon}(| 2e_\phi^n-e_\phi^{n-1}
-\delta_{tt}\phi(t^{n+1})|,| e_\phi^{n+1}|)_{\Omega}\nonumber\\
	&\leq& \frac{K_1^2}{\varepsilon^2\eta_1}\|
2e_\phi^{n}-e_\phi^{n-1}\|^2_{-1,\Omega}+\frac{K_1^2}{\varepsilon^2\eta_1}\|
R_1^{n+1}\|^2_{-1,\Omega}+\frac{\eta_1}{2}\| \nabla e_\phi^{n+1}\|_\Omega^2,\label{convergenceproof9}
\end{eqnarray}
where $\eta_1$ is a positive constant.

Similarly, pairing \eqref{error-5} with $(-\Delta_{\Gamma})^{-1}e_\psi^{n+1}$ and adding to \eqref{error-6} paired
with $-e_\psi^{n+1}$, we have
\begin{eqnarray}
&&\left(\frac{\frac{3}{2}e_{\psi}^{n+1}-2e_{\psi}^{n}+\frac{1}{2}e_{\psi}^{n-1}}
{\tau},(-\Delta_{\Gamma})^{-1}e_\psi^{n+1}\right)_{\Gamma}-\delta \kappa(\Delta_{\Gamma}
e_\psi^{n+1},e_\psi^{n+1})_{\Gamma}-A_2\tau(\Delta_{\Gamma} \delta_t e_\psi^{n+1},e_\psi^{n+1})_{\Gamma}\nonumber\\
&=&(R_\psi^{n+1},(-\Delta_{\Gamma})^{-1}e_\psi^{n+1})_{\Gamma}-B_2(R_3^{n+1},e_\psi^{n+1})_{\Gamma}
+A_2(\Delta_{\Gamma} R_4^{n+1},e_\psi^{n+1})_{\Gamma}\nonumber\nonumber\\
&-&B_2(\delta_{tt}e_\psi^{n+1},e_\psi^{n+1})_{\Gamma}-\frac{1}{\delta}\left(g(2\psi^n-\psi^{n-1})
-g(\psi(t^{n+1})),e_\psi^{n+1}\right)_{\Gamma}\nonumber\\
&-&A_1(\partial_{\bold n}R_2^{n+1},e_\psi^{n+1})_{\Gamma}-\varepsilon(\partial_{\bold n}e_\phi^{n+1},e_\psi^{n+1})_{\Gamma}
-A_1\tau(\partial_{\bold n}\delta_te_\phi^{n+1},e_\psi^{n+1})_{\Gamma}\nonumber\\
&:&=J_6+J_7+J_8+J_9+J_{10}+J_{11}+J_{12}+J_{13}.\label{convergenceproof10}
\end{eqnarray}
The left hand side of \eqref{convergenceproof10} can be estimated as follows,
\begin{eqnarray}
&&\left(\frac{\frac{3}{2}e_{\psi}^{n+1}-2e_{\psi}^{n}+\frac{1}{2}e_{\psi}^{n-1}}{\tau},
(-\Delta_{\Gamma})^{-1}e_\psi^{n+1}\right)_{\Gamma}\nonumber\nonumber\\
&=&\frac{1}{4\tau}(\| e_\psi^{n+1}\|^2_{-1,{\Gamma}}+\|
2e_\psi^{n+1}-e_\psi^{n}\|^2_{-1,{\Gamma}})\nonumber\\
&-&\frac{1}{4\tau}(\| e_\psi^{n}\|^2_{-1,{\Gamma}}+\|
2e_\psi^{n}-e_\psi^{n-1}\|^2_{-1,{\Gamma}})+\frac{1}{4\tau}\| \delta_{tt}
e_\psi^{n+1}\|^2_{-1,{\Gamma}},\label{convergenceproof11}
\end{eqnarray}
\begin{eqnarray}
-\delta\kappa(\Delta_{\Gamma} e_{\Gamma}^{n+1},e_\psi^{n+1})_{\Gamma}=\delta\kappa\|\nabla_{\Gamma}
e_\psi^{n+1}\|_{\Gamma}^2,\label{convergenceproof12}
\end{eqnarray}
\begin{eqnarray}
-A_2\tau(\Delta_{\Gamma}\delta_te_\psi^{n+1},e_\psi^{n+1})_{\Gamma}
&=&A_2\tau(\delta_t\nabla_{\Gamma} e_\psi^{n+1},\nabla_{\Gamma} e_\psi^{n+1})_{\Gamma}\nonumber\\
&=&\frac12A_2\tau(\|\nabla_{\Gamma} e_\psi^{n+1}\|_{\Gamma}^2-\|\nabla_{\Gamma}
e_\psi^{n}\|_{\Gamma}^2+\|\delta_t\nabla_{\Gamma} e_\psi^{n+1}\|_{\Gamma}^2).
\label{convergenceproof13}
\end{eqnarray}
Also, we estimate the terms on the right hand side of \eqref{convergenceproof10},
\begin{eqnarray}
J_6=-(R_\psi^{n+1},(-\Delta_{\Gamma})^{-1}e_\phi^{n+1})_{\Gamma}\leq
\frac{1}{\eta_2}\|\Delta_{\Gamma}^{-1}R_\psi^{n+1}\|^2_{-1,{\Gamma}}
+\frac{\eta_2}{4}\| \nabla_{\Gamma} e_\psi^{n+1}\|_{\Gamma}^2,\label{convergenceproof14}
\end{eqnarray}
\begin{eqnarray}
J_7=-B_2(R_3^{n+1},e_\psi^{n+1})_{\Gamma}\leq \frac{B_2^2}{\eta_2}\|
 R_3^{n+1}\|^2_{-1,{\Gamma}}+\frac{\eta_2}{4}\| \nabla_{\Gamma}
 e_\psi^{n+1}\|_{\Gamma}^2,\label{convergenceproof15}
\end{eqnarray}
\begin{eqnarray}
J_8=A_2(\Delta_{\Gamma} R_4^{n+1},e_\psi^{n+1})_{\Gamma}\leq \frac{A_2^2}{\eta_2}
\| \nabla_{\Gamma} R_4^{n+1}\|_{\Gamma}^2+\frac{\eta_2}{4}\| \nabla_{\Gamma}
e_\psi^{n+1}\|_{\Gamma}^2,\label{convergenceproof16}
\end{eqnarray}

\begin{eqnarray}
J_9&=&-B_2(\delta_{tt}e_\psi^{n+1},e_\psi^{n+1})_{\Gamma}=
-B_2\left(e_\psi^{n+1}-(2e_\psi^{n}-e_\psi^{n-1}),e_\psi^{n+1}\right)_{\Gamma}\nonumber\\
&\leq& -B_2 \| e_\psi^{n+1}\|_{\Gamma}^2+\frac{B_2^2}{\eta_2}\|
2e_\psi^{n}-e_\psi^{n-1}\|^2_{-1,{\Gamma}}+\frac{\eta_2}{4}\| \nabla_{\Gamma}
e_\psi^{n+1}\|_{\Gamma}^2,\label{convergenceproof17}
\end{eqnarray}
\begin{eqnarray}
	J_{10}&=&-\frac{1}{\delta}\left(g(2\psi^n-\psi^{n-1})-g(\psi(t^{n+1})),e_\psi^{n+1}\right)_{\Gamma}\nonumber\\
	&\leq&\frac{L_2}{\delta}(| 2\psi^n-\psi^{n-1}-\psi(t^{n+1})|,| e_\psi^{n+1}|)_{\Gamma}\nonumber\\
	&=&\frac{K_2}{\delta}(| 2e_\psi^n-e_\psi^{n-1}-\delta_{tt}\psi(t^{n+1})|,
| e_\psi^{n+1}|)_{\Gamma}\nonumber\\
	&\leq& \frac{K_2^2}{\delta^2\eta_2}\|
2e_\psi^{n}-e_\psi^{n-1}\|^2_{-1,{\Gamma}}+\frac{K_2^2}{\delta^2\eta_2}\|
R_3^{n+1}\|^2_{-1,{\Gamma}}+\frac{\eta_2}{2}\| \nabla_{\Gamma}
e_\psi^{n+1}\|_{\Gamma}^2,\label{convergenceproof18}
\end{eqnarray}
where $\eta_2$ is a positive constant.
\begin{eqnarray}
J_{11}=-A_1(\partial_{\bold n}R_2^{n+1},e_\psi^{n+1})_{\Gamma},\label{convergenceproof19}
\end{eqnarray}
\begin{eqnarray}
J_{12}=-\varepsilon(\partial_{\bold n}e_\phi^{n+1},e_\psi^{n+1})_{\Gamma},\label{convergenceproof20}
\end{eqnarray}
\begin{eqnarray}
J_{13}=-A_1\tau(\partial_{\bold n}\delta_te_\phi^{n+1},e_\psi^{n+1})_{\Gamma}.\label{convergenceproof21}
\end{eqnarray}
Combining \eqref{convergenceproof1}-\eqref{convergenceproof21} leads to
\begin{eqnarray}
	&&\frac{1}{4\tau}(\| e_\phi^{n+1}\|^2_{-1,\Omega}+\|
2e_\phi^{n+1}-e_\phi^{n}\|^2_{-1,\Omega})+\frac12A_1\tau\|\nabla e_\phi^{n+1}
\|_\Omega^2\nonumber\\
	&+&\frac12A_1\tau
	\|\delta_t\nabla e_\phi^{n+1}\|_\Omega^2+\varepsilon\|\nabla
e_\phi^{n+1}\|_\Omega^2+\frac{1}{4\tau}\| \delta_{tt} e_\phi^{n+1}\|^2_{-1,\Omega}\nonumber\\
	&+&B_1\| e_\phi^{n+1}\|_\Omega^2+\frac{1}{4\tau}(\| e_\psi^{n+1}
\|^2_{-1,\Omega}+\| 2e_\psi^{n+1}-e_\psi^{n}\|^2_{-1,\Omega})\nonumber\\
	&+&\frac12A_2\tau\|\nabla_{\Gamma} e_\psi^{n+1}\|_{\Gamma}^2+\frac12A_2\tau
	\|\delta_t\nabla_{\Gamma} e_\psi^{n+1}\|_{\Gamma}^2+\delta\kappa \|\nabla_{\Gamma}
e_\psi^{n+1}\|_{\Gamma}^2
	+\frac{1}{4\tau}\| \delta_{tt} e_\psi^{n+1}\|^2_{-1,\Gamma}+B_2\|
e_\psi^{n+1}\|_{\Gamma}^2\nonumber\\
	&\leq&
	\frac{1}{4\tau}(\| e_\phi^{n}\|^2_{-1,\Omega}+\|
2e_\phi^{n}-e_\phi^{n-1}\|^2_{-1,\Omega})+\frac12A_1\tau\|\nabla
e_\phi^{n}\|_\Omega^2+\frac{1}{\eta_1}\|\Delta^{-1}R_\phi^{n+1}\|_{-1,\Omega}^2\nonumber\\
	&+&\frac32\eta_1\|\nabla e_\phi^{n+1}\|_\Omega^2+\frac{1}{\eta_1}\left(B_1^2+\frac{K_1^2}
{\varepsilon^2}\right)\| R_1^{n+1}\|_{-1,\Omega}^2+\frac{A_1^2}{\eta_1}
\|\nabla R_2^{n+1}\|_\Omega^2
	\nonumber\\
	&+&\frac{1}{\eta_1}\left(B_1^2+\frac{K_1^2}{\varepsilon^2}\right)\| 2e_\phi^{n}-e_\phi^{n-1}\|^2_{-1,\Omega}+
	\frac{1}{4\tau}(\| e_\psi^{n}\|^2_{-1,\Gamma}+\| 2e_\psi^{n}-e_\psi^{n-1}
\|^2_{-1,\Gamma})\nonumber\\
	&+&\frac12A_2\tau\|\nabla_{\Gamma}
e_\psi^{n}\|_{\Gamma}^2+\frac{1}{\eta_2}\|\Delta_{\Gamma}^{-1}R_\psi^{n+1}
\|_{-1,\Gamma}^2+\frac32\eta_2\|\nabla_{\Gamma} e_\psi^{n+1}\|_{\Gamma}^2\nonumber\\
	&+&\frac{1}{\eta_2}\left(B_2^2+\frac{K_2^2}{\delta^2}\right)\|
 R_3^{n+1}\|_{-1,\Gamma}^2+\frac{A_2^2}{\eta_2}\|\nabla_{\Gamma} R_4^{n+1}\|_{\Gamma}^2
	+\frac{1}{\eta_2}\left(B_2^2+\frac{K_2^2}{\delta^2}\right)\|
2e_\psi^{n}-e_\psi^{n-1}\|^2_{-1,\Gamma}.\label{convergenceproof22}
\end{eqnarray}	
Using Taylor expansions in integral form, we can get estimate for the residuals:
\[
\|\Delta^{-1} R_\phi^{n+1}\|^2_{-1,\Omega}\leq c_1\tau^3\int_{t_{n-1}}^{t_{n+1}}
\|\partial_{ttt}\Delta^{-1}\phi(t)\|^2_{-1,\Omega}dt
\leq C_2\tau^3,
\]
\[
\|\Delta^{-1} R_\psi^{n+1}\|^2_{-1,\Gamma}\leq c_2\tau^3\int_{t_{n-1}}^{t_{n+1}}
\|\partial_{ttt}\Delta_{\Gamma}^{-1}\psi(t)\|^2_{-1,\Gamma}dt\leq C_3\tau^3,
\]
\[
\| R_1^{n+1} \|^2_{-1,\Omega}\leq c_3\tau^3\int_{t_{n-1}}^{t_{n+1}}
\|\partial_{tt}\phi(t)\|^2_{-1,\Omega}dt\leq C_4\tau^3,
\]
\[
\| R_3^{n+1} \|^2_{-1,\Gamma}\leq c_4\tau^3\int_{t_{n-1}}^{t_{n+1}}
\|\partial_{tt}\psi(t)\|^2_{-1,\Gamma}dt
\leq C_5\tau^3,
\]
\[
\| \nabla R_2^{n+1} \|^2_{\Omega}\leq c_5\tau^3\int_{t_{n}}^{t_{n+1}}
\|\partial_{t}\nabla\phi(t)\|_\Omega^2dt\leq C_6\tau^3,
\]
\[
\| \nabla R_4^{n+1} \|^2_{\Gamma}\leq c_6\tau^3\int_{t_{n}}^{t_{n+1}}
\|\partial_{t}\nabla_{\Gamma}\psi(t)\|_{\Gamma}^2dt\leq C_7\tau^3.
\]
Taking $\displaystyle\eta_1=\frac\varepsilon2$, $\displaystyle\eta_2=\frac{\delta}{2}$ in \eqref{convergenceproof22}, we get
\begin{eqnarray}
	&&(\| e_\phi^{n+1}\|^2_{-1,\Omega}+\|
2e_\phi^{n+1}-e_\phi^{n}\|^2_{-1,\Omega})+2A_1\tau^2\|\nabla e_\phi^{n+1}
\|_\Omega^2+2A_1\tau^2	\|\delta_t\nabla e_\phi^{n+1}\|_\Omega^2\nonumber\\
	&+&\varepsilon\tau\|\nabla e_\phi^{n+1}\|_\Omega^2+\| \delta_{tt}
 e_\phi^{n+1}\|^2_{-1,\Omega}+4B_1\tau\| e_\phi^{n+1}\|_\Omega^2\nonumber\\
	&+&(\| e_\psi^{n+1}\|^2_{-1,\Gamma}+\|
 2e_\psi^{n+1}-e_\psi^{n}\|^2_{-1,\Gamma})+2A_2\tau^2\|\nabla_{\Gamma}
 e_\psi^{n+1}\|_{\Gamma}^2+2A_2\tau^2
	\|\delta_t\nabla_{\Gamma} e_\psi^{n+1}\|_{\Gamma}^2\nonumber\\
	&+&\delta\kappa\tau \|\nabla_{\Gamma} e_\psi^{n+1}\|_{\Gamma}^2+\| \delta_{tt}
 e_\psi^{n+1}\|^2_{-1,\Gamma}+4B_2\tau\| e_\psi^{n+1}\|_{\Gamma}^2\nonumber\\
	&\leq&
	\| e_\phi^{n}\|^2_{-1,\Omega}+\| 2e_\phi^{n}-e_\phi^{n-1}
\|^2_{-1,\Omega}+2A_1\tau^2\|\nabla
e_\phi^{n}\|_\Omega^2+C_8\tau\varepsilon^{-3}\|2e_\phi^{n}-e_\phi^{n-1}
\|^2_{-1,\Omega}\nonumber\\
	&+&	\| e_\psi^{n}\|^2_{-1,\Gamma}+\|
2e_\psi^{n}-e_\psi^{n-1}\|^2_{-1,\Gamma}+2A_2\tau\|\nabla_{\Gamma}e_\psi\|_{\Gamma}^2
+C_9\tau\delta^{-3}\|2e_\psi^{n}-e_\psi^{n-1}\|^2_{-1,\Gamma}\nonumber\\
	&+&C_{10}\varepsilon^{-1}\tau^4 +C_{11}\delta^{-1}\tau^4,
\end{eqnarray}
where
\[
C_8=8K_1^2+8B_1^2\varepsilon^2,\quad C_9=8K_2^2+8B_2^2\delta^2,
\]
\[
C_{10}=8C_2+8C_4(\frac{K_1^2}{\varepsilon^2}+B_1^2)+8C_6A_1^2,\quad C_{11}=8C_3+8C_5(\frac{K_2^2}{\delta^2}+B_2^2)+8C_7A_2^2.
\]
By using the discrete Gronwall inequality, we obtain
\begin{eqnarray*}	
	&&\max_{1\leq n\leq N}\{\| e_{\phi}^{n+1}\|^2_{-1,\Omega}+\|
2e_{\phi}^{n+1}-e_{\phi}^{n}\|^2_{-1,\Omega}+2A_1\tau^2\| \nabla e_{\phi}^{n+1}
\|_\Omega^2\nonumber\\
	&&+\| e_{\psi}^{n+1}\|^2_{-1,\Gamma}+\|
2e_{\psi}^{n+1}-e_{\psi}^{n}\|^2_{-1,\Gamma}+2A_2\tau^2\| \nabla_{\Gamma}
e_{\psi}^{n+1}\|_{\Gamma}^2\}\nonumber\\
	&+&\sum^N_{n=1}(2A_1\tau^2\| \delta_t \nabla e_{\phi}^{n+1}\|_\Omega^2+\tau\varepsilon\|\nabla
e_{\phi}^{n+1}\|_\Omega^2+\| \delta_{tt} e_{\phi}^{n+1}\|^2_{-1,\Omega}+4B_1\tau\|
e_{\phi}^{n+1}\|_\Omega^2\nonumber\\
	&+&2A_2\tau^2\| \delta_t \nabla_{\Gamma} e_{\psi}^{n+1}\|_{\Gamma}^2
+\tau\delta\kappa\|\nabla_{\Gamma} e_{\psi}^{n+1}\|_{\Gamma}^2+\| \delta_{tt}
e_{\psi}^{n+1}\|^2_{-1,\Gamma}+4B_2\tau\| e_{\psi}^{n+1}\|_{\Gamma}^2)\nonumber\\
	&\leq& \exp\left((C_8\varepsilon^{-3}+C_9\delta^{-3})T\right)\left(C_{10}\varepsilon^{-1}+C_{11}\delta^{-1}
	+C_0(5+2A_1\varepsilon^{-1}\tau)+C_1(5+2A_2\delta^{-1}\kappa^{-1}\tau)\right)\tau^4.
\end{eqnarray*}
This completes the proof.
\end{proof}

\section{Numerical experiments}
\label{sec5}
In this section, we present some numerical experiments of the Liu-Wu model by scheme \eqref{scheme-1}-\eqref{scheme-6}
in two dimensions. For time discretization, we use the BDF2 scheme. For spatial operators, we use the
second-order central finite difference method to discretize them on a uniform spatial grid. For such a linear
scheme, we use the generalized minimum residual method as the linear solver. We conduct the experiments on the
rectangular domain $[0,1]^2$.

\subsection{Accuracy test}
\label{SubSecAT}
In this section, numerical accuracy tests using the scheme \eqref{scheme-1}-\eqref{scheme-6} are presented to
support our error analysis. Let $\Omega$ to be the unit square, the spatial step size $h = 1/256$ and the time
step $\tau=0.08, \,0.04, \,0.025, \,0.0125,\,0.01, \,0.005$. The parameters
are chosen as $\varepsilon=\delta=0.02,\, \kappa=0.02,\, A_1 = 68,\,A_2 = 150, \,B_1 = 120$ and $B_2 = 120$. The initial
data is taken as the piecewise constant setting: 
\begin{equation}
\label{condition3}
\phi_0(x,y)=\left\{
\begin{array}{ll}
0,\quad &  x \in \Omega,\\
1,\quad  &  x \in \Gamma.
\end{array}\right.
\end{equation}
 We choose $F$ and $G$ to be the modified double-well potential as
\[
F(x)=G(x)=\left\{
\begin{array}{ll}
        (x-1)^2,              & x > 1,\\
		\frac{1}{4}(x^2-1)^2,     & -1 \leq x \leq  1,\\
		(x+1)^2,              & x < -1.
\end{array}
\right.
\]
Therefore, the second derivative of $F$ with respect to $\phi$ and the second derivative of $G$ with
respect to $\psi$ are bounded
\[
\max_{\phi \in \mathbb{R}}| F^{\prime\prime}(\phi)|
 =\max_{\psi \in \mathbb{R}}| G^{\prime\prime}(\psi)| \leq 2.
\]
The errors are calculated as the difference between the solution of the coarse time step and that of the
reference time step $\tau=2.5\times 10^{-4}$. In Figure \ref{fig17}, we plot the sum of $L^2$ errors of $\phi$
and $\psi$ between the numerical solution and the reference solution at $T = 4$ with different time step
sizes. The result shows clearly that the slope of fitting line is $2.0653$, which
in turn verifies the convergence rate of the numerical scheme is asymptotically at least second-order
temporally for $\phi$ and $\psi$, which is consistent with our numerical analysis in Section~\ref{sec4}.
\begin{figure}[h]
	\centering
	\includegraphics[height=0.28\textwidth,width=0.28\textwidth]{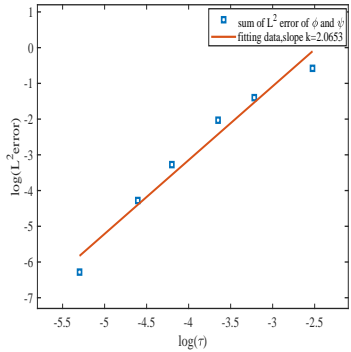}
	\caption{The numerical errors $\|e_\phi\|_\Omega+\|e_\psi\|_\Gamma$ at $T = 4$.}
	\label{fig17}
\end{figure}

\subsection{Cases with different initial conditions}
\label{SubSecIC}

We consider the numerical approximations for the Liu-Wu model with different initial conditions.

\medskip

\noindent \textbf{Case\, 1.}\quad The initial condition is set as piecewise constants:
\begin{equation}
\label{condition1}
\phi_0(x,y)=\left\{
\begin{array}{ll}
1 &  x>1/2,\\
-1  &   x\leq 1/2.
\end{array}
\right.
\end{equation}
\begin{figure}[h]
	\centering
	\includegraphics[height=0.28\textwidth,width=0.28\textwidth]{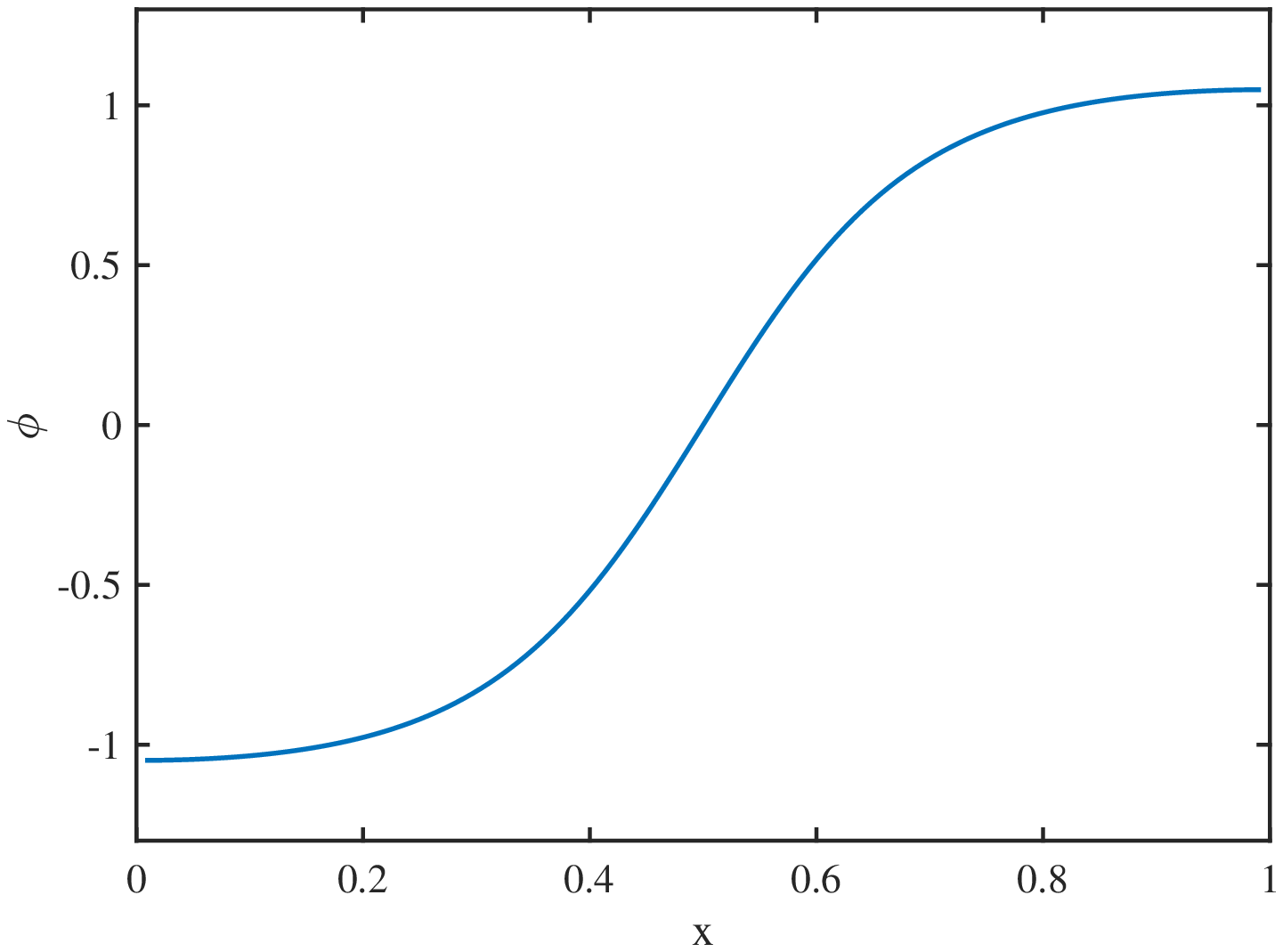}
	\includegraphics[height=0.28\textwidth,width=0.28\textwidth]{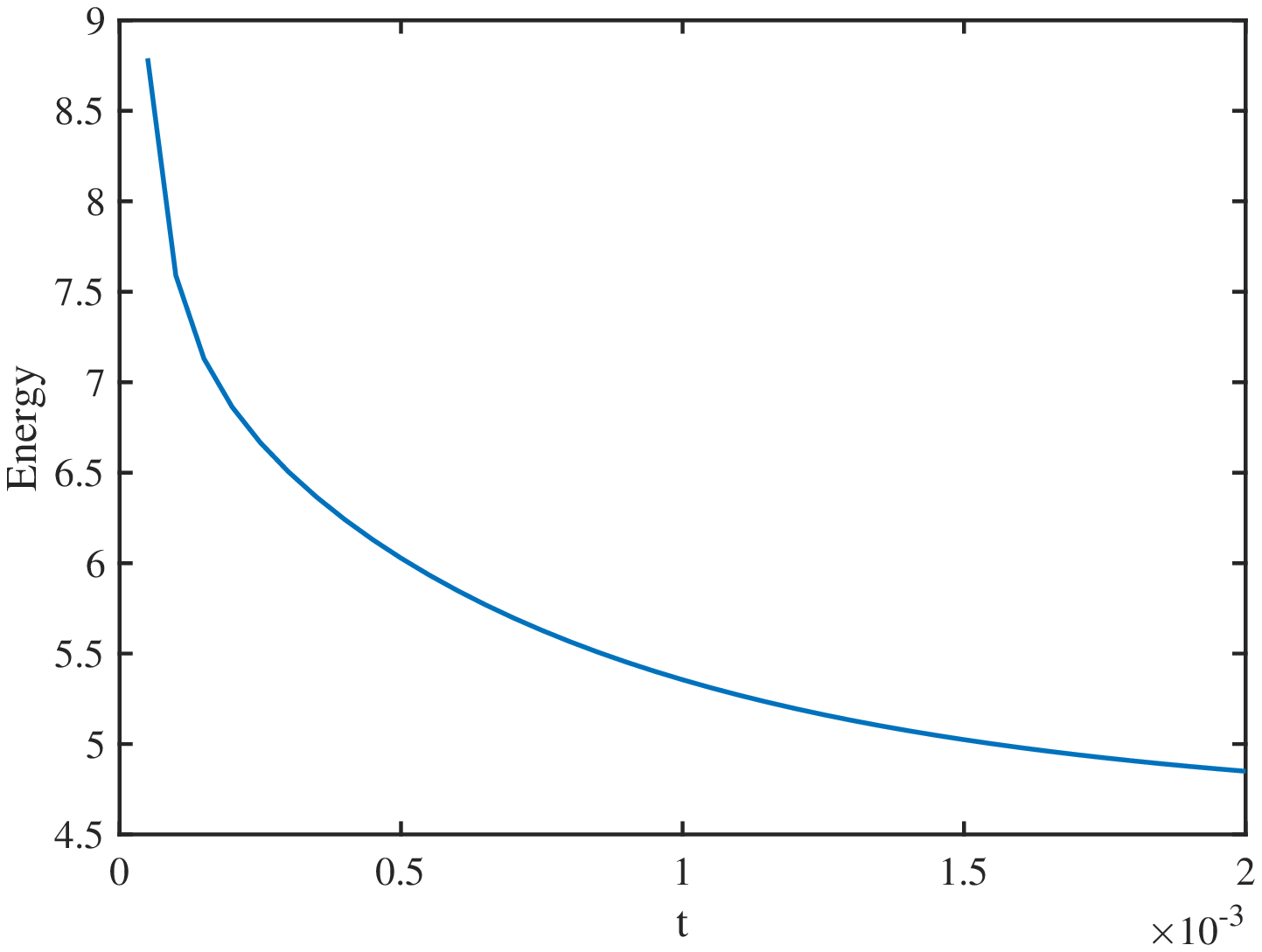}
	\includegraphics[height=0.28\textwidth,width=0.28\textwidth]{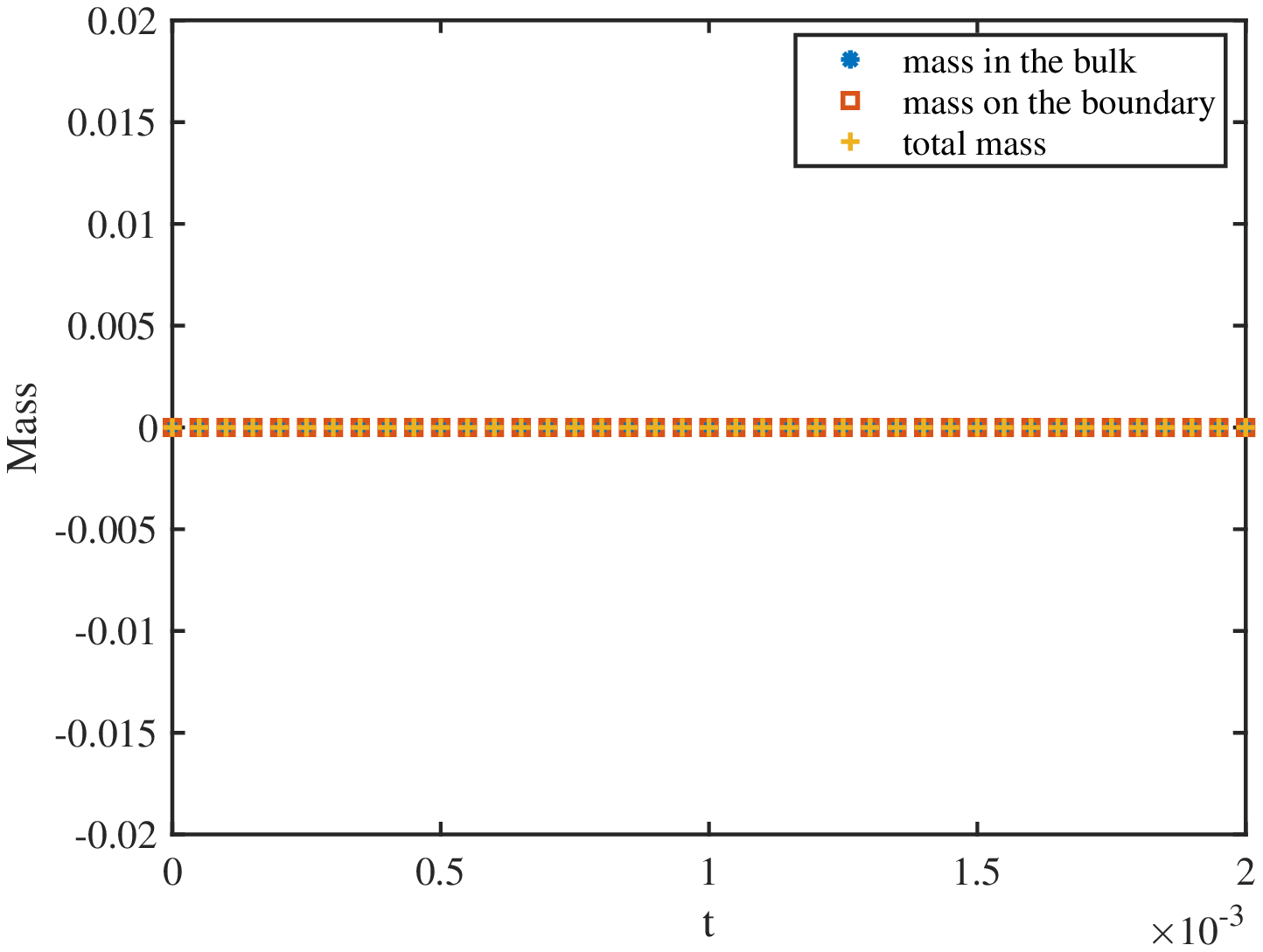}
	\caption{Projection of numerical solution on $y=\frac12$ at $t=0.002$ (left); Energy evolution (middle);
Mass evolution (right) for $0\leq t\leq0.002$ with initial condition \eqref{condition1}.}		
	\label{fig2}
\end{figure}
In this example, the time step $\tau=10^{-5}$ and the spacial size $h=0.01$.
The parameters are set as $\varepsilon=1,\, \delta=0.1, \,\kappa=1, \,A_1=A_2=1,\,B_1=1$ and $B_2=10$. We take the classical double
well potential function (\ref{FGdwell}).
We only plot the cutline of solution on $y=\frac12$ at $t=0.002$ in Figure \ref{fig2}, since the
the numerical result is almost a constant in the vertical direction. It is consistent with the literature works.
 The evolution of energy and mass with time are also shown in the Figure \ref{fig2}, which reveals the
energy stability and the conservation of mass in the region and the boundary.

\medskip

\noindent \textbf{Case\, 2.}\quad Consider the initial condition
\begin{equation}
\label{condition2}
\phi_0(x,y)=\sin(4\pi x)\cos(4\pi y).
\end{equation}
\begin{figure}[h]
\centering
\includegraphics[height=0.28\textwidth,width=0.28\textwidth]{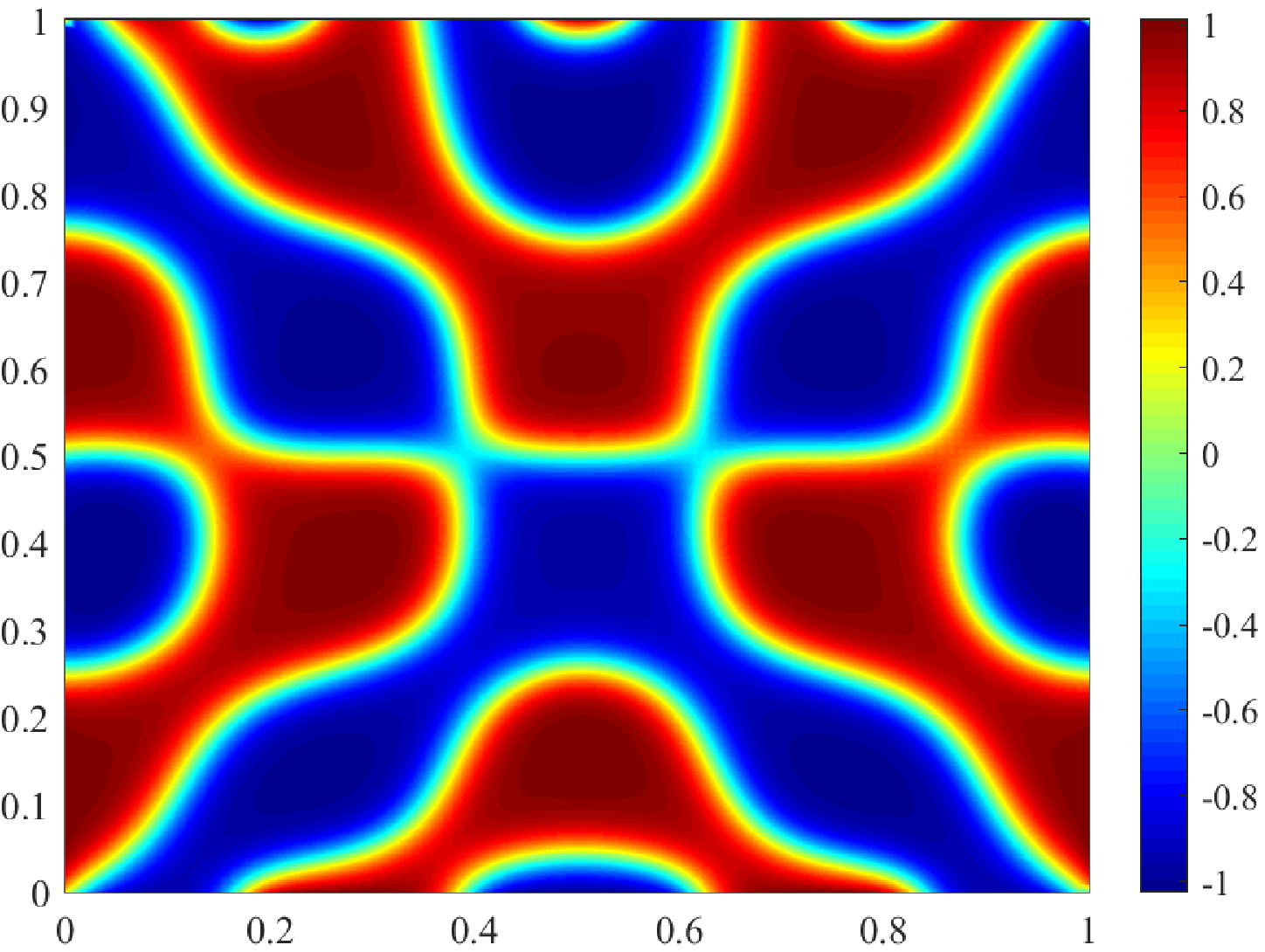}
\includegraphics[height=0.28\textwidth,width=0.28\textwidth]{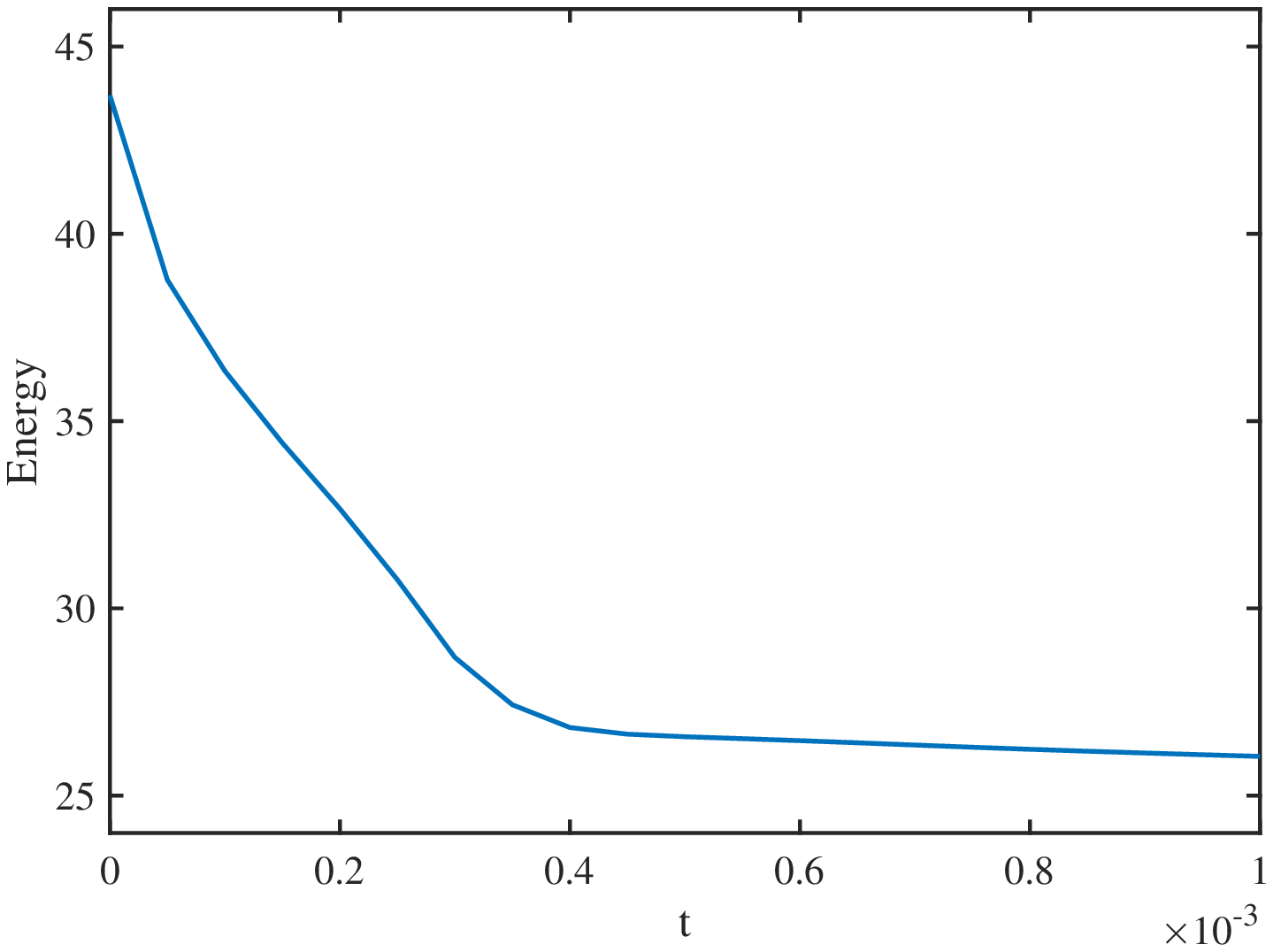}
\includegraphics[height=0.28\textwidth,width=0.28\textwidth]{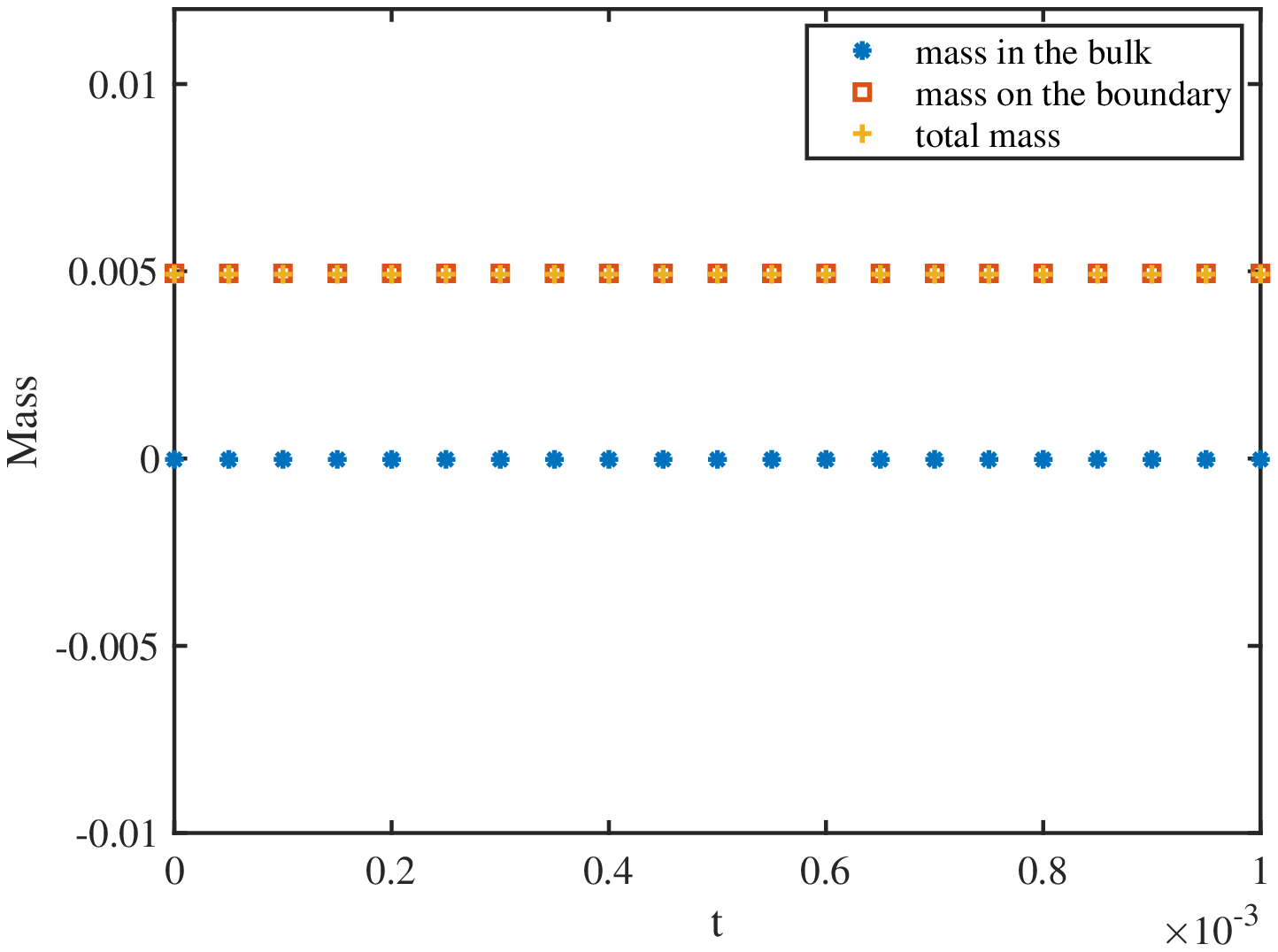}
\caption{Numerical solution $\phi$ and $\psi$ at $t=0.001$ (left); Energy evolution (middle) for $0\leq t\leq 0.001$;
Mass evolution (right) for $0\leq t\leq 0.001$ with initial condition \eqref{condition2}.}		
\label{fig3}
	\end{figure}
Here, the time step $\tau=10^{-5}$ and the spacial
size $h=0.01$. The parameters are set as $\varepsilon=\delta=0.02,\, \kappa=1,\, A_1=A_2=1,\, B_1=B_2=50$ to ensure
that the scheme is stable. The numerical solution at $t=0.001$ is displayed in Figure \ref{fig3}. The development
of energy and total mass for $0\leq t\leq 0.001$
 is also shown in the Figure \ref{fig3}, which reveals the energy stability and the conservation
of mass in the region and the boundary, respectively. It is seen that the total energy has a quick decay in the
early stage until $t=0.0004$, and then the energy decreases lightly.

\medskip

\noindent \textbf{Case\, 3.}\quad We reproduce the numerical experiment in Section \ref{SubSecAT} ever studied by
Garcke and Knopf \cite{garcke2020weak}. The initial data is set to $0$ at interior points and  $1$ on the boundary
points. The time step is $\tau=8\times 10^{-6}$ and the spacial step is $h=0.01$.
The parameters are set as $\varepsilon=\delta=0.02$ and $\kappa=0.02$. The stability parameters are  $A_1=A_2=5, \,B_1=B_2=100$.
\begin{figure}[h]
	\centering
   \includegraphics[height=0.28\textwidth,width=0.28\textwidth]{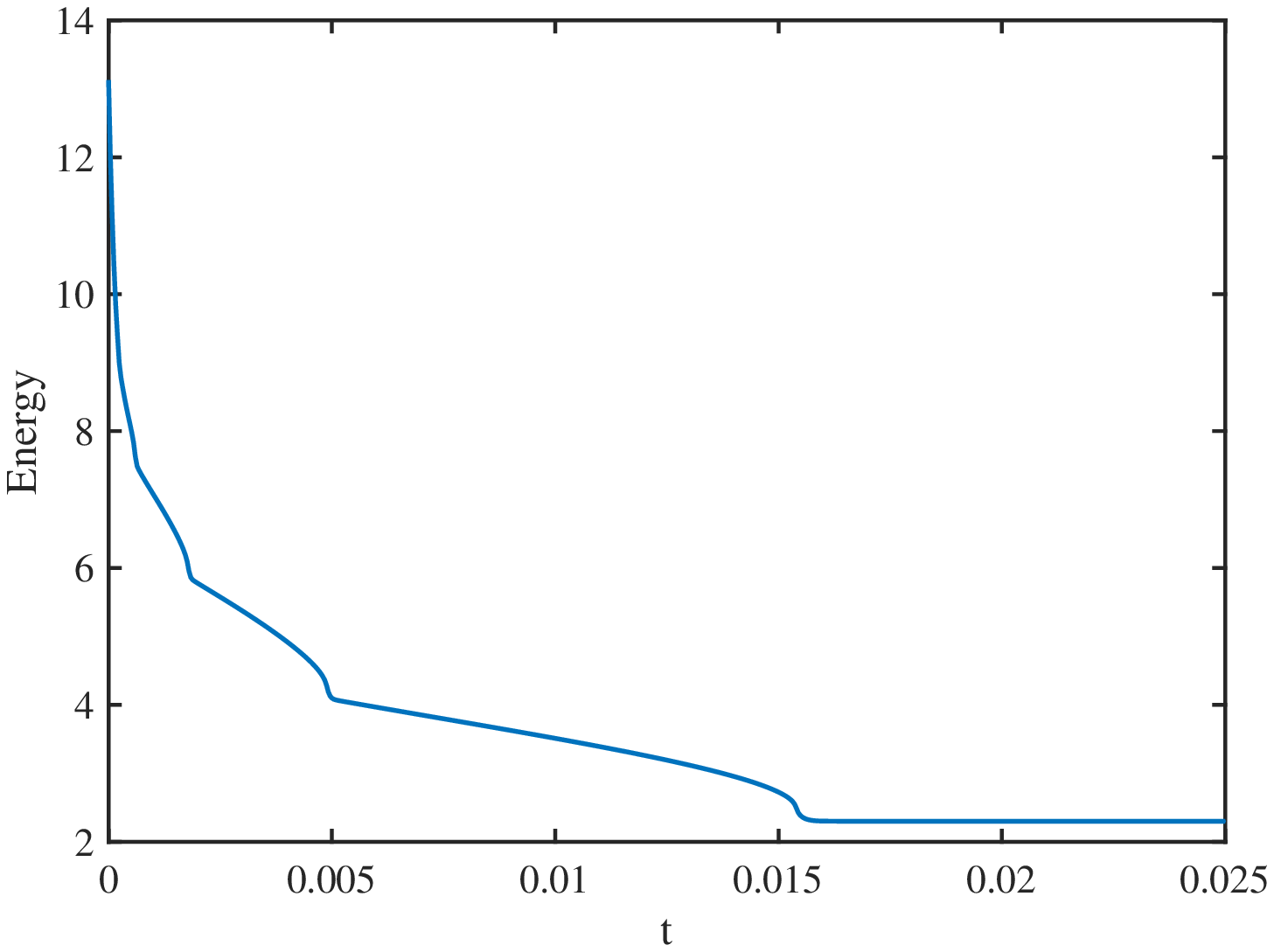}
	\includegraphics[height=0.28\textwidth,width=0.28\textwidth]{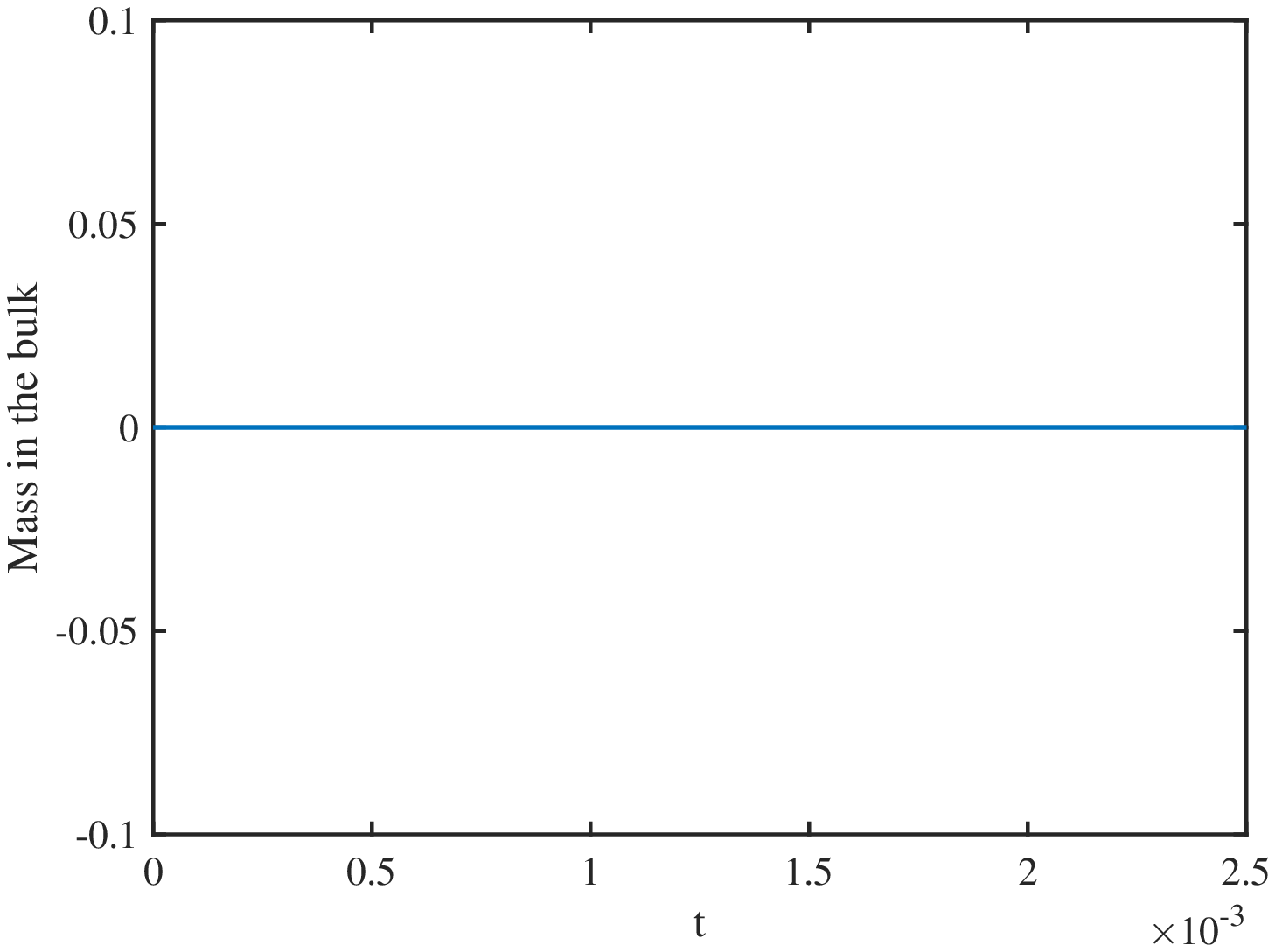}
	\includegraphics[height=0.28\textwidth,width=0.28\textwidth]{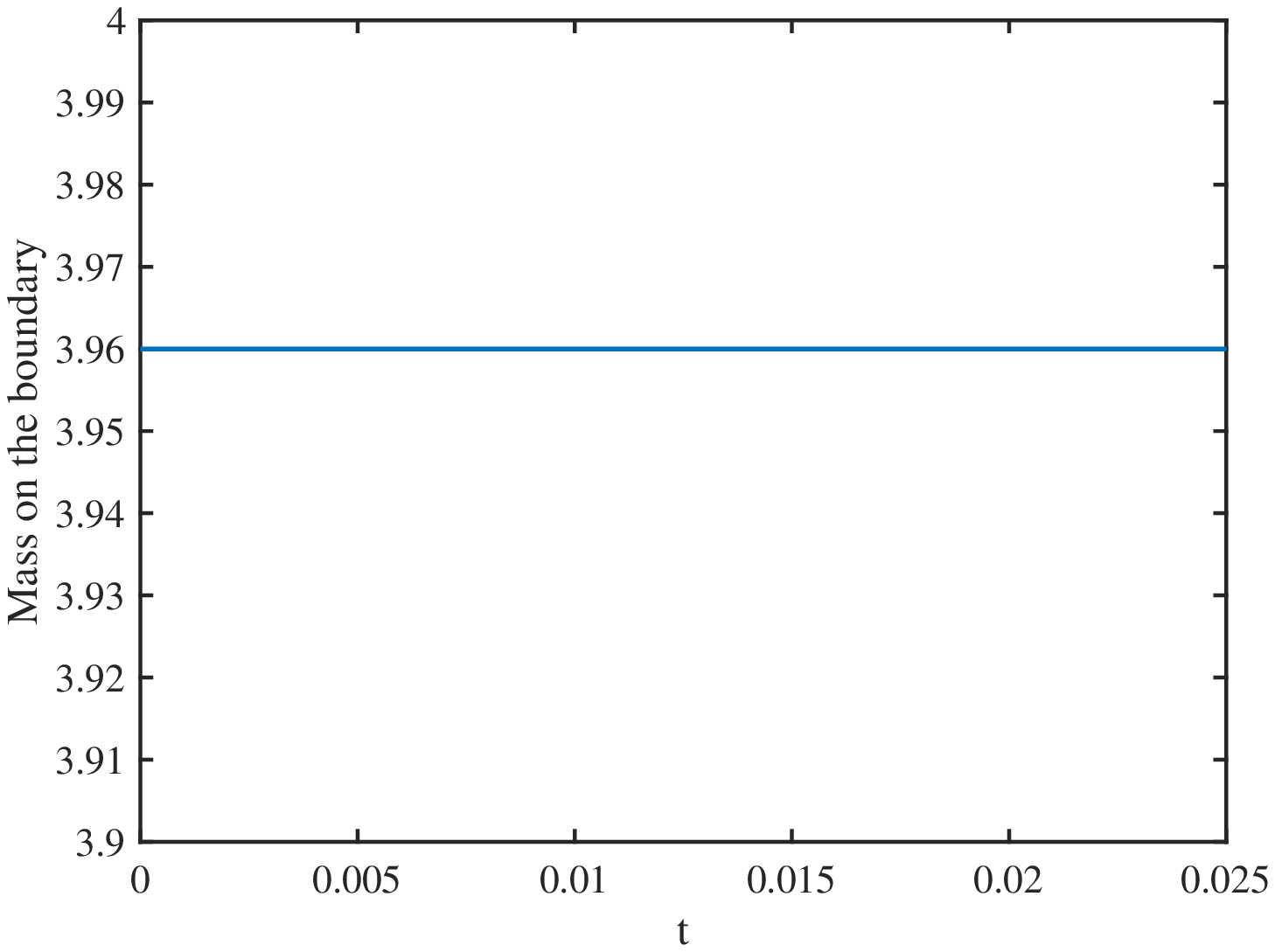}
\caption{Total energy development for $0\leq t\leq 0.025$ (left); Mass in the bulk (middle); Mass on the boundary (Right).}			
\label{fig5}
\end{figure}

\begin{figure}[h]
	\centering
	\includegraphics[height=0.28\textwidth,width=0.28\textwidth]{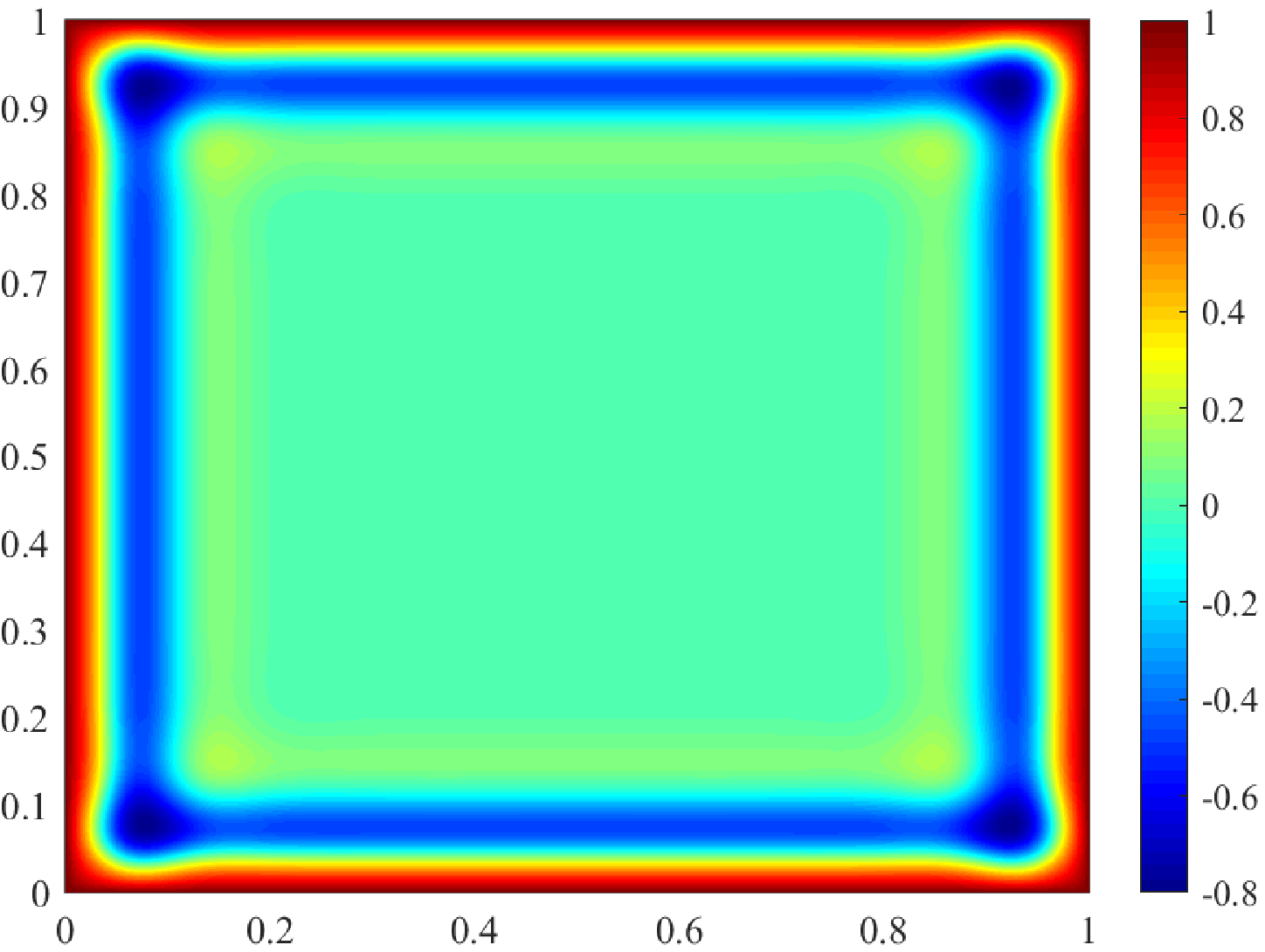}
	\includegraphics[height=0.28\textwidth,width=0.28\textwidth]{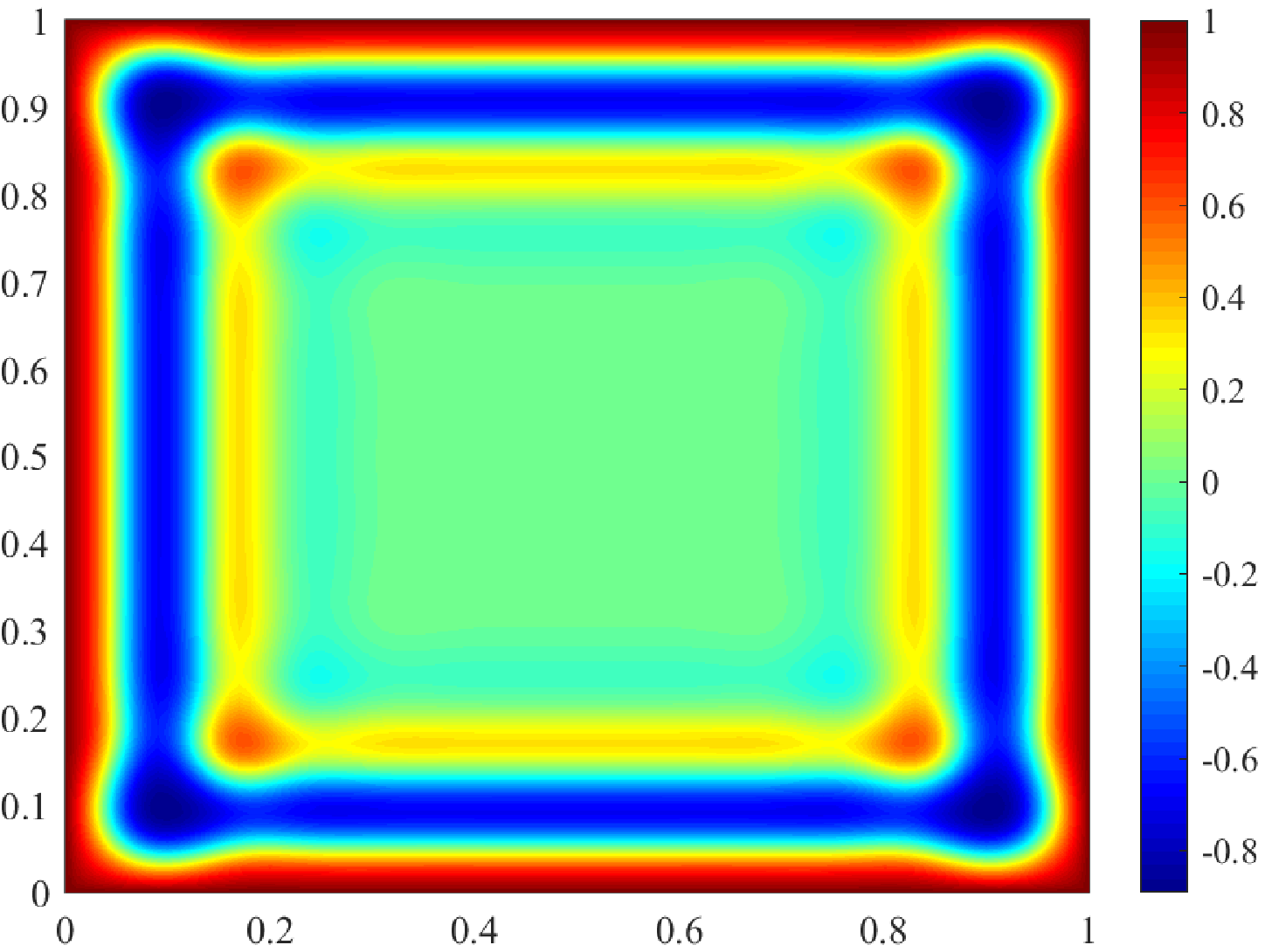}
	\includegraphics[height=0.28\textwidth,width=0.28\textwidth]{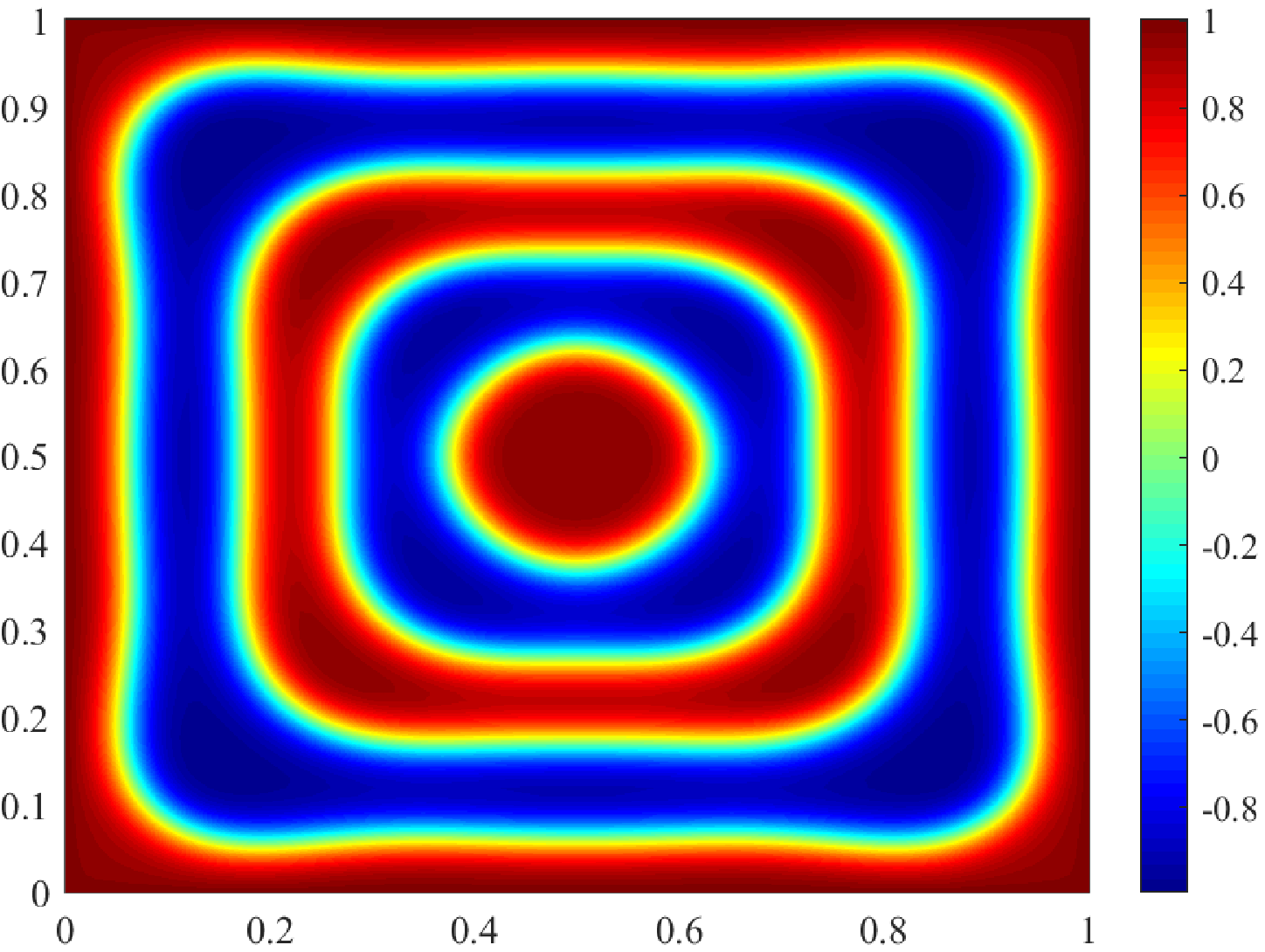}
	\includegraphics[height=0.28\textwidth,width=0.28\textwidth]{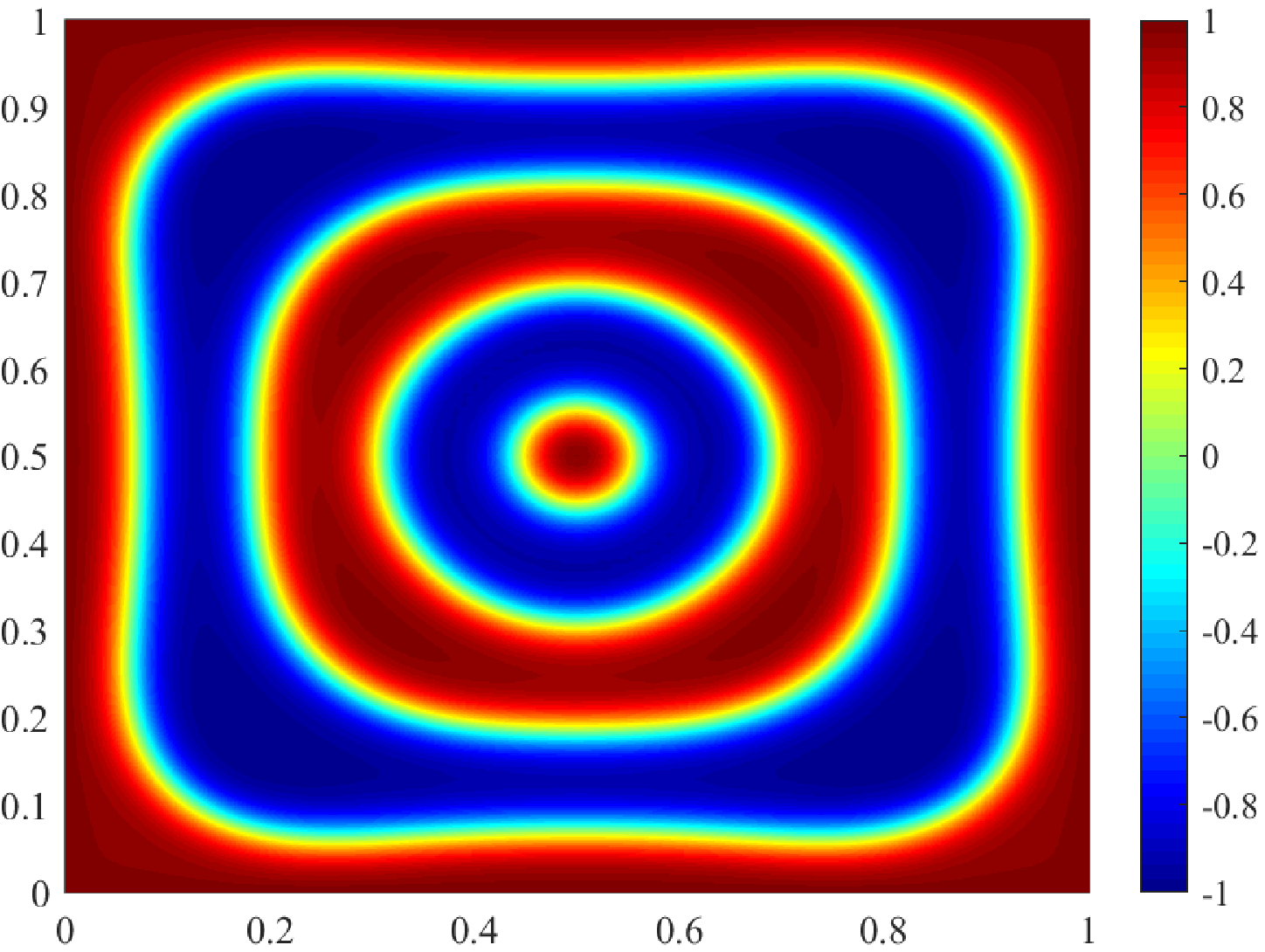}
	\includegraphics[height=0.28\textwidth,width=0.28\textwidth]{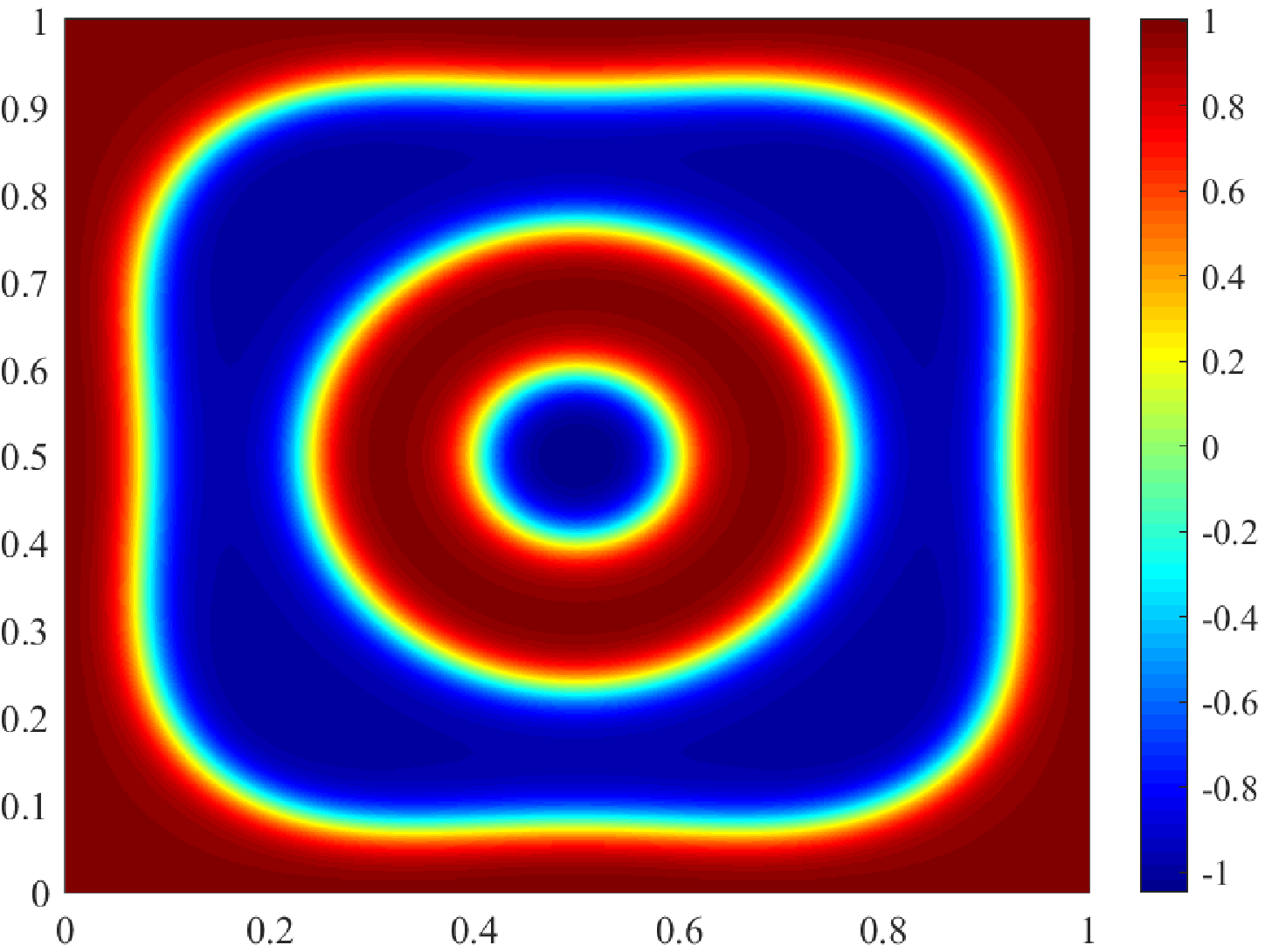}
	\includegraphics[height=0.28\textwidth,width=0.28\textwidth]{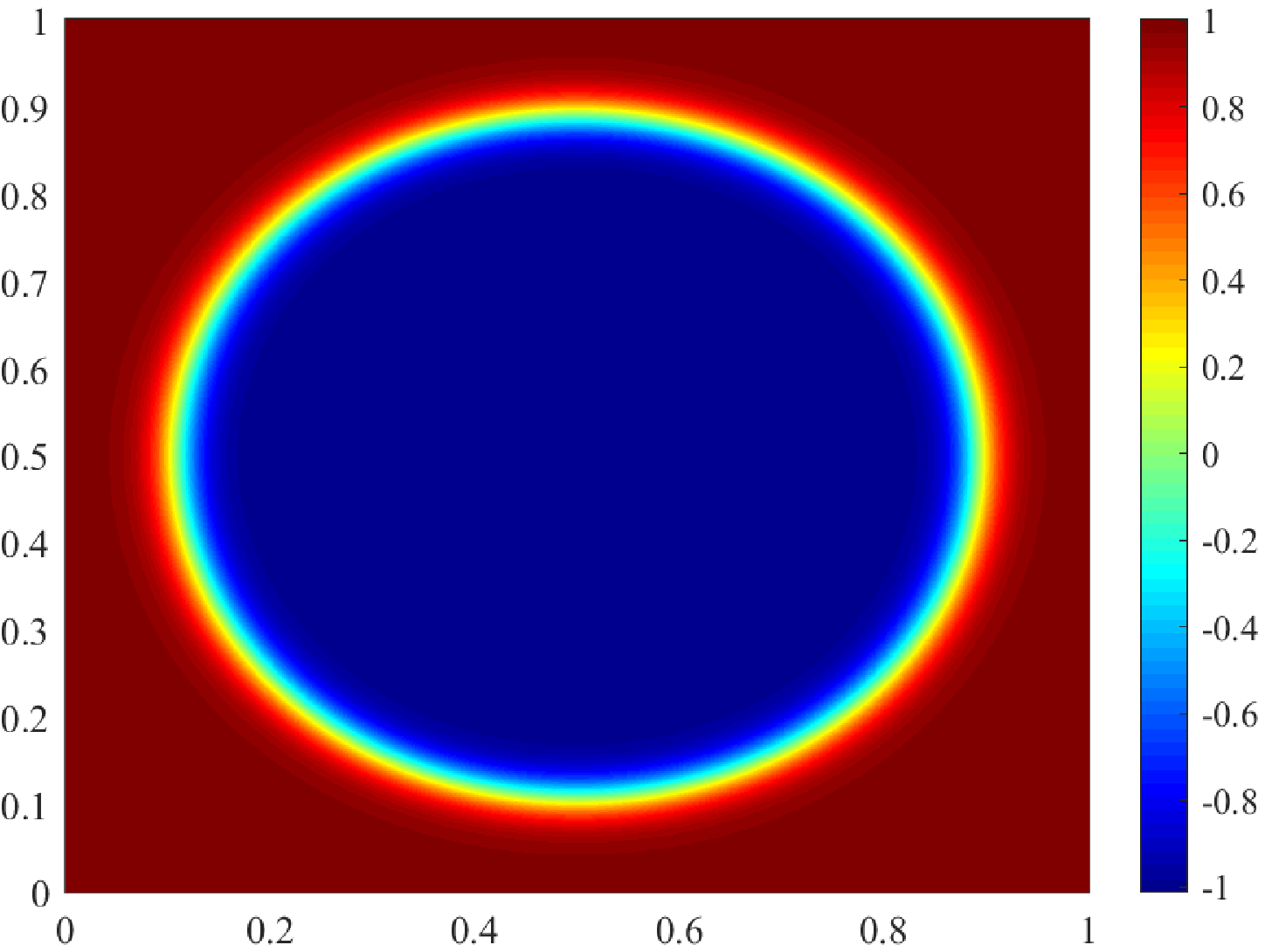}
	\caption{Snapshots of the phase variable $\phi$ at $t=0.00004,\,0.00008,\,0.00064,\,0.0016,\,0.004,\,0.02$.}		
	\label{fig6}
\end{figure}
The evolution of energy is presented in Figure \ref{fig5}. It is observed that the energy decays quickly initially until
about $t=0.017$, and then the energy curve trends to become flat, which implies the system reaches a steady state. We also show
the curves for the development of the mass in the bulk (total $\phi$) and on the boundary (total $\psi$) in Figure \ref{fig5}.
Obviously, the both kinds of  mass are conserved respectively, which is consistent with theoretical result (\ref{MassCSV}).

The numerical solutions at $t=0.00004,\,0.00008,\,0.00064,\,0.0016,\,0.004$ and $0.02$ are displayed in Figure \ref{fig6}. Due to the
conservation of mass on the boundary, the numerical solution remains $1$ throughout the computation. A wavy
structure begins to form starting from the initial time, and then multi-layered wavy structure is evolved gradually.
Next, the multi-layered structure may be developed to the steady state: a circle centered in the region with $-1$
inside and $1$ outside the circle. These numerical results are consistent with the reference works in the literatures.

\medskip

\noindent \textbf{Case\, 4.}\quad We simulate a phase separation process in the case of
vanishing adsorption rates. The initial configuration is
\[
\phi_0(x,y)=\max\{0.1\sin(\pi x),\,0.1\sin(\pi y)\}.
\]
Here, we take the classical double well potential function (\ref{FGdwell}). The time step $\tau=8\times 10^{-5}$ and the spacial step is $h=0.01$.
The parameters are set as $\varepsilon=\delta=0.02$ and $\kappa=1$. The stability parameters are  $A_1=A_2=5, \,B_1=B_2=100$, which is compared with those listed in Section 5.1 in \cite{metzger2022convergent}.


\begin{figure}[h]
	\centering
	\includegraphics[height=0.23\textwidth,width=0.23\textwidth]{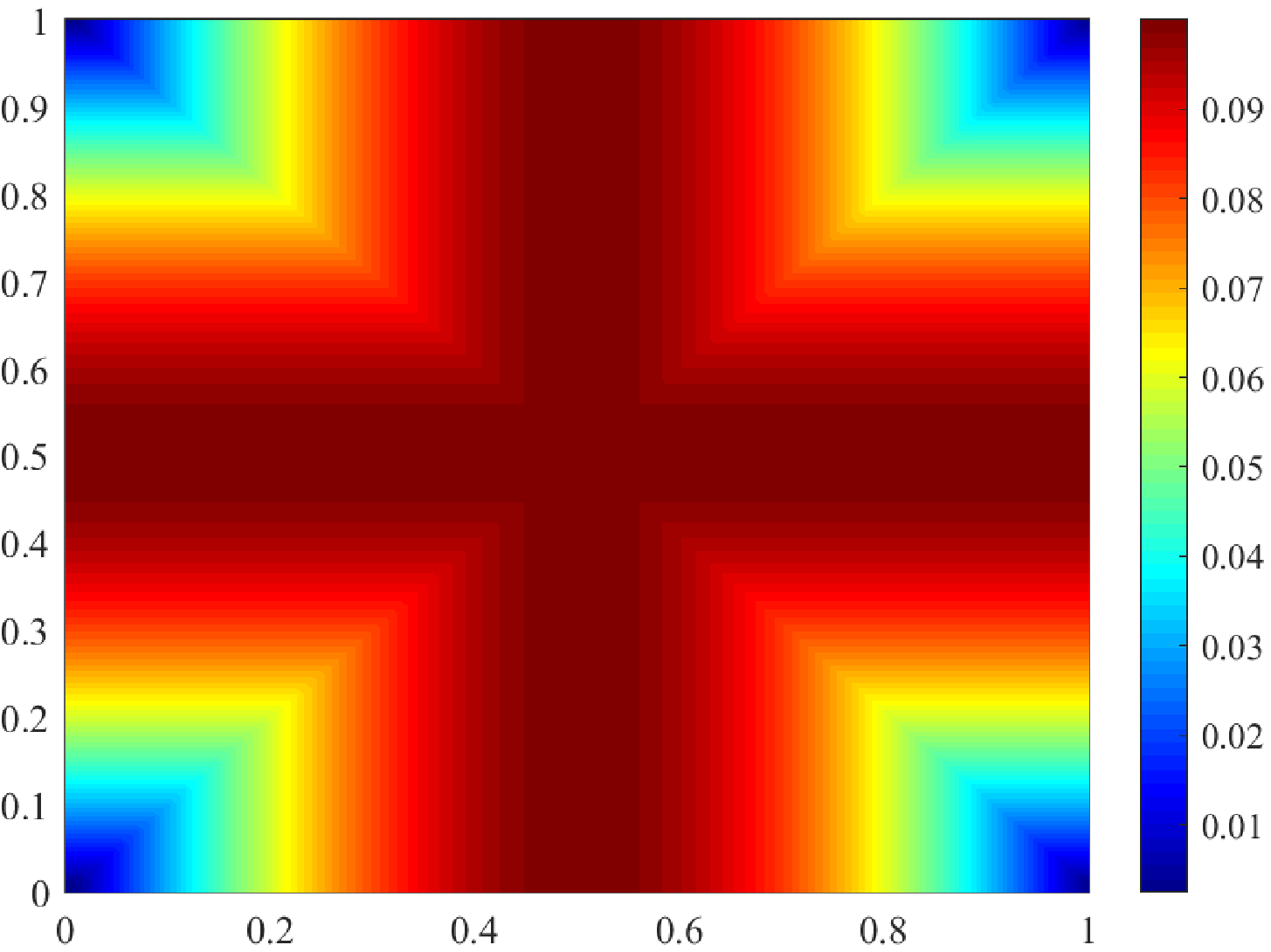}
	\includegraphics[height=0.23\textwidth,width=0.23\textwidth]{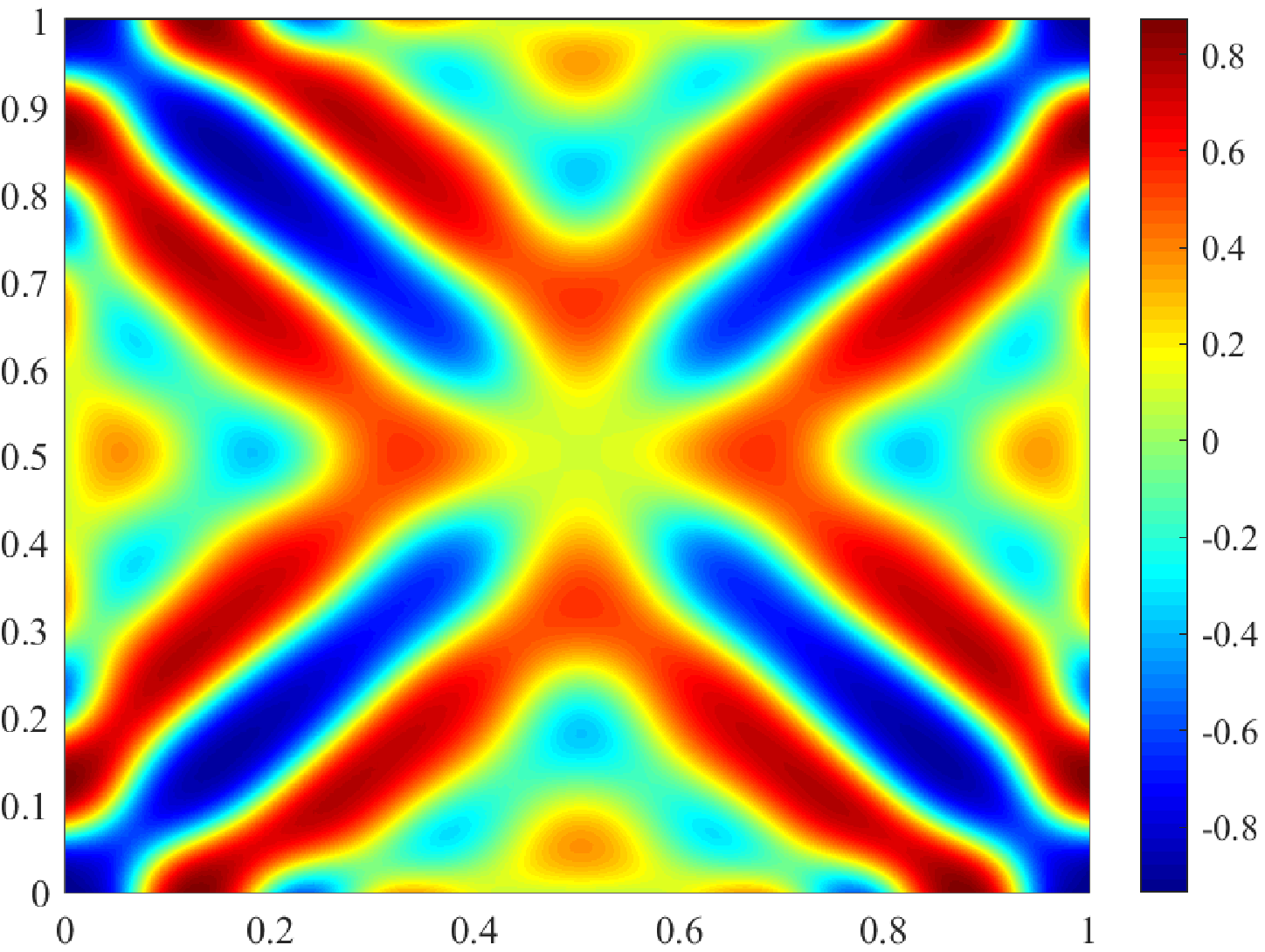}
	\includegraphics[height=0.23\textwidth,width=0.23\textwidth]{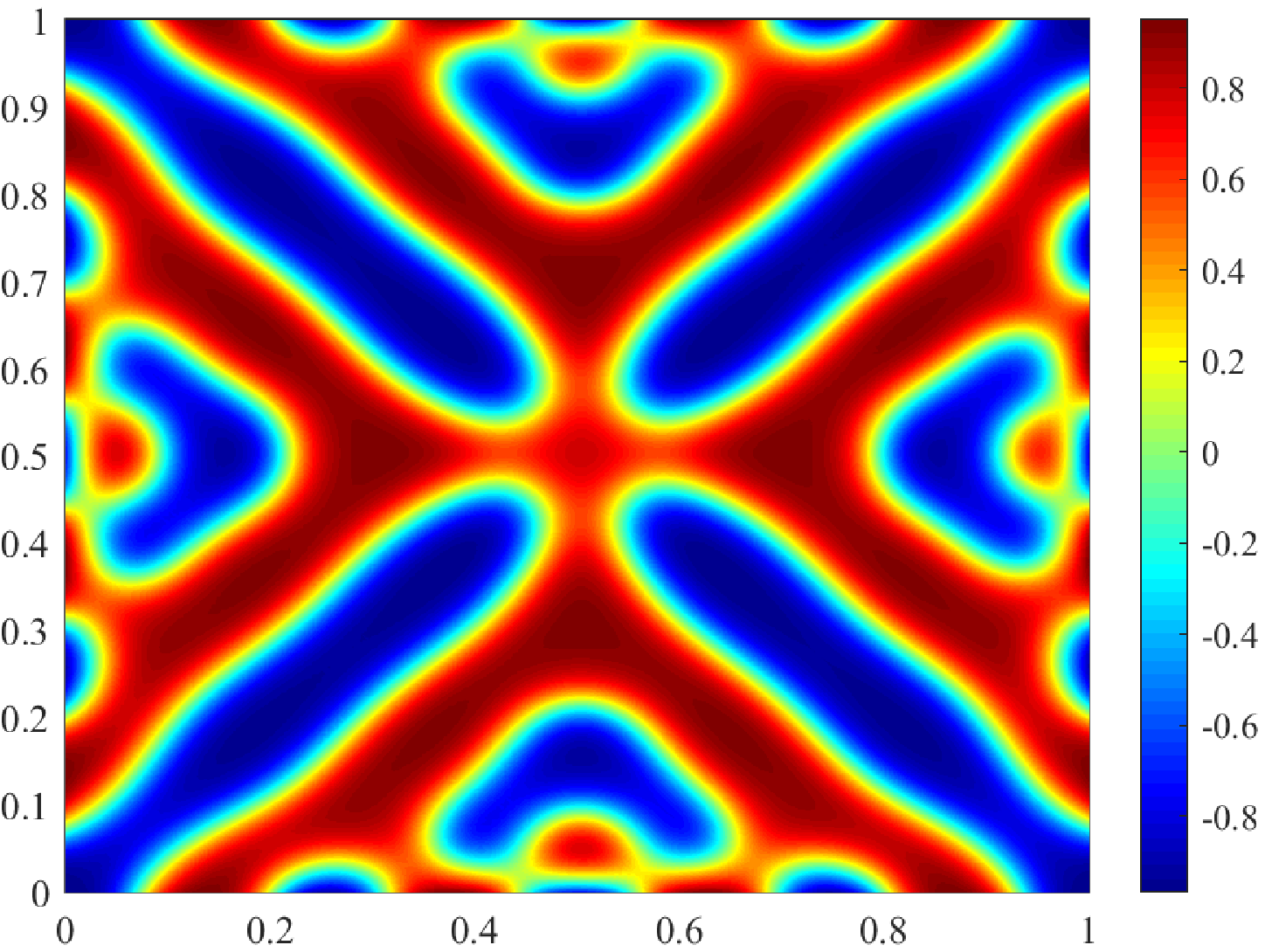}
	\includegraphics[height=0.23\textwidth,width=0.23\textwidth]{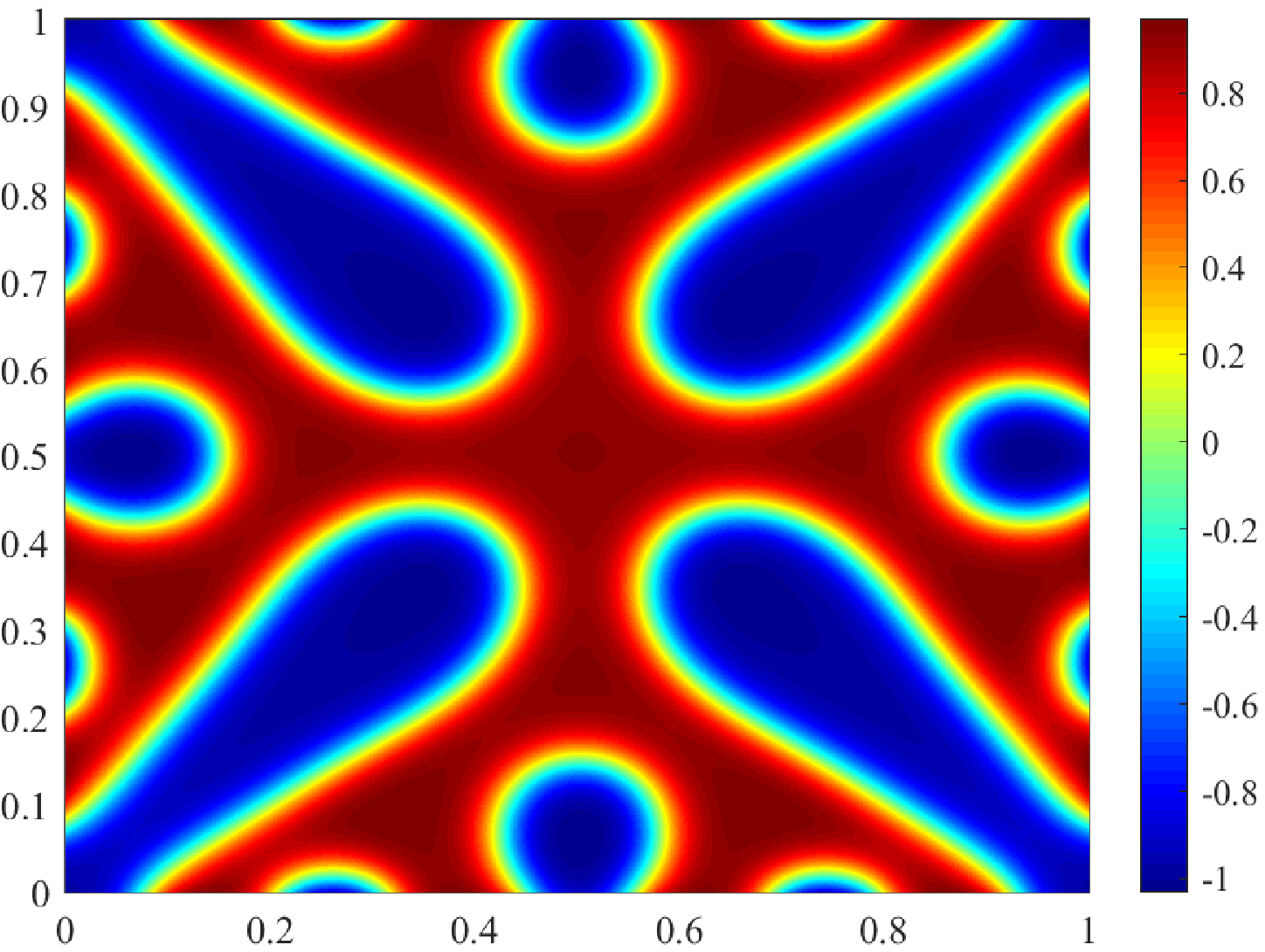}
	\includegraphics[height=0.23\textwidth,width=0.23\textwidth]{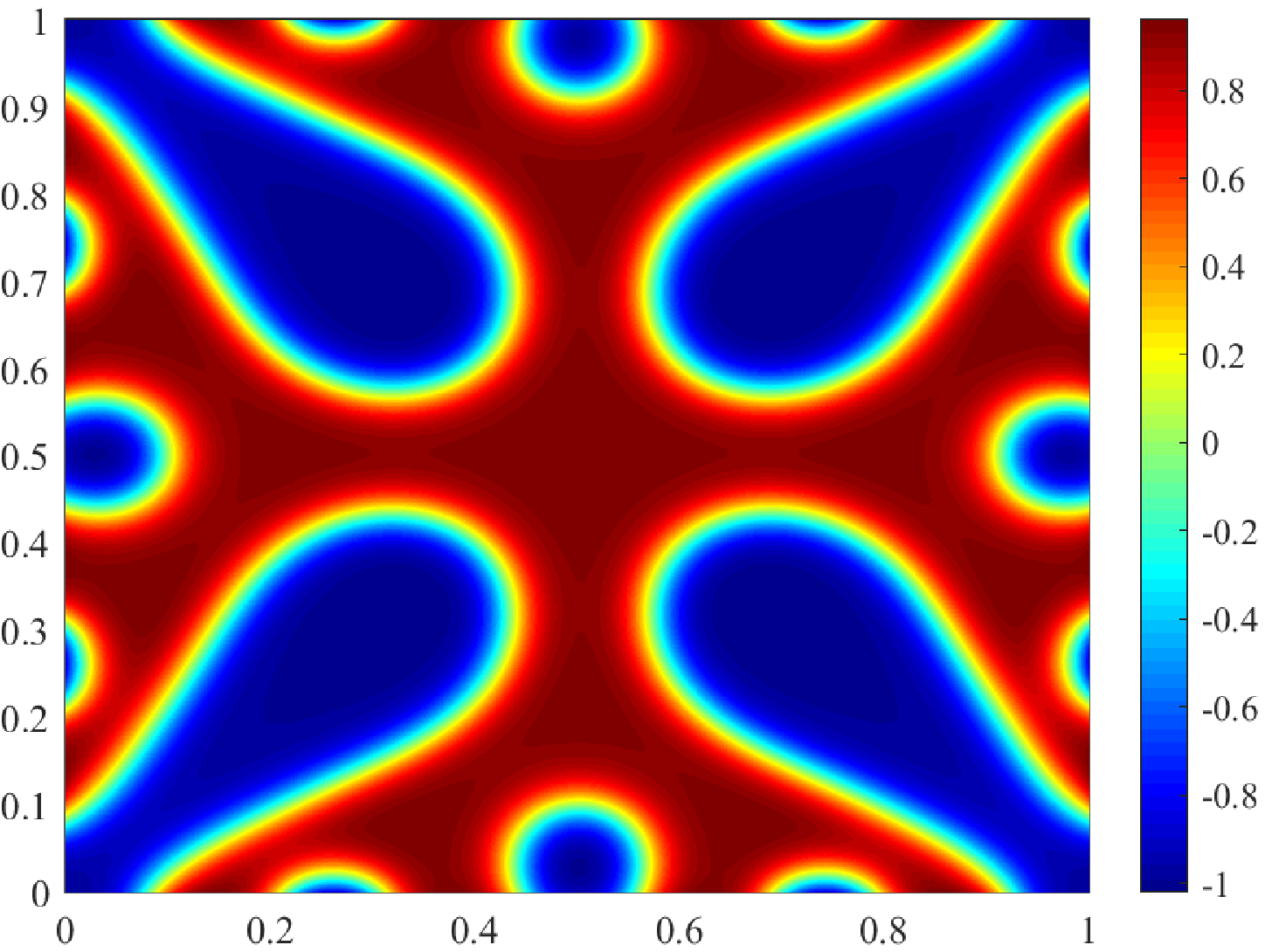}
	\includegraphics[height=0.23\textwidth,width=0.23\textwidth]{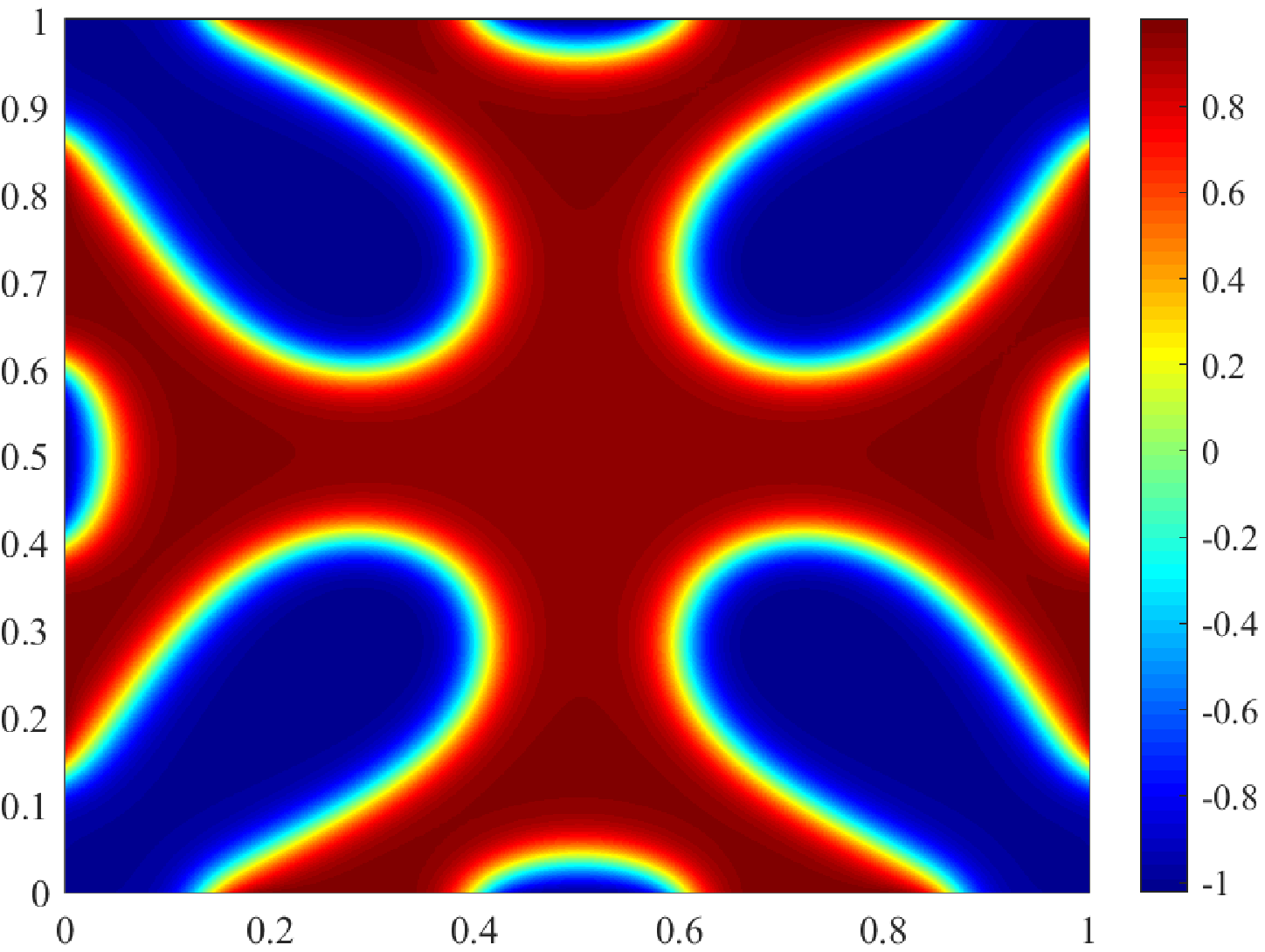}
	\includegraphics[height=0.23\textwidth,width=0.23\textwidth]{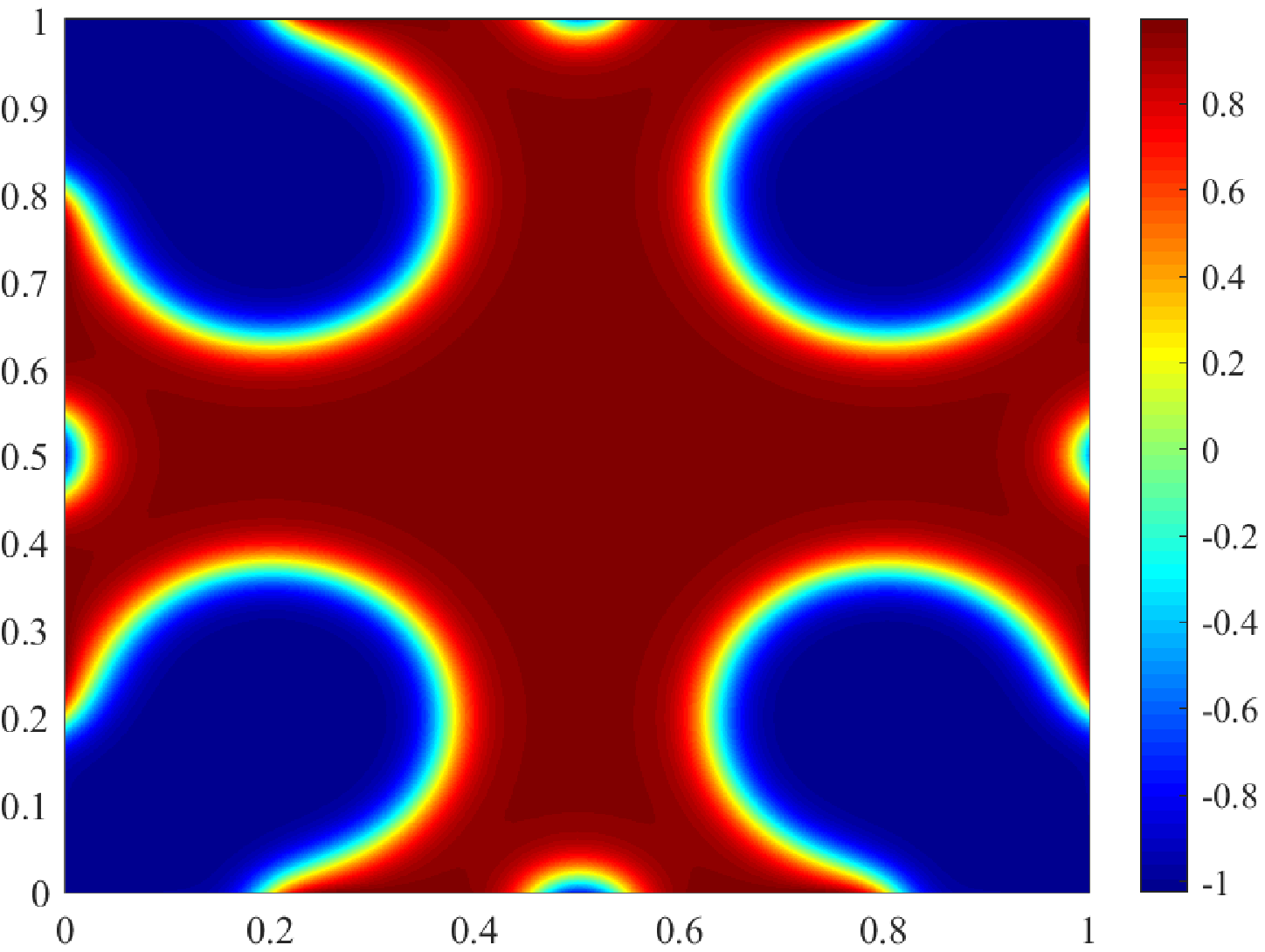}
	\includegraphics[height=0.23\textwidth,width=0.23\textwidth]{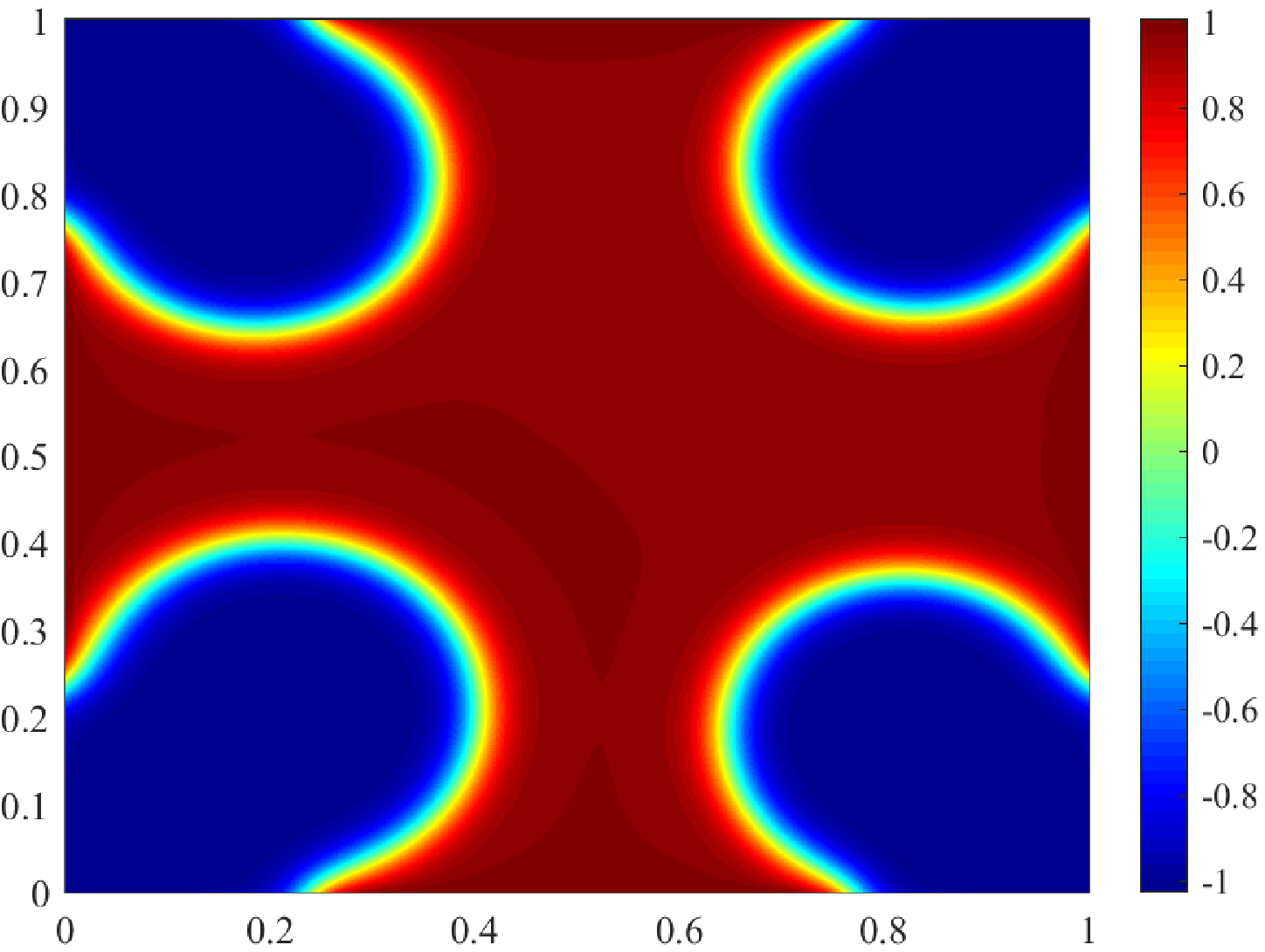}
	\caption{Snapshots of the phase variable $\phi$ at $t=0,\,0.0004,\,0.0006,\,0.0024,\,0.006,\,0.0327,\,0.2.$}		
	\label{fig7}
\end{figure}
Due to the unstable initial configuration, the two phases will be separated into different regions,
where the value of $\phi$  is close to constants $\pm 1$. The solution evolution is shown in Figure \ref{fig7}.
The red color represents the phase $\phi=1$ and blue one indicates phase $\phi=-1$. To visualize the
initial conditions, the figure of initial data is rescaled so that the red color represents $\phi=0.1$,
blue one corresponding $\phi=0$. Since the initial data is symmetric in both $x$- and $y$-direction, the
phase evolution is always developed in a symmetric way until it reaches the steady state with four patterns
arranged symmetrically. The evolution of energy and mass with time are also shown in
the Figure \ref{fig8}, which again indicates the energy stability and the conservation of mass in the region
and the boundary.
	\begin{figure}[H]
		\centering
		\includegraphics[height=0.286\textwidth,width=0.28\textwidth]{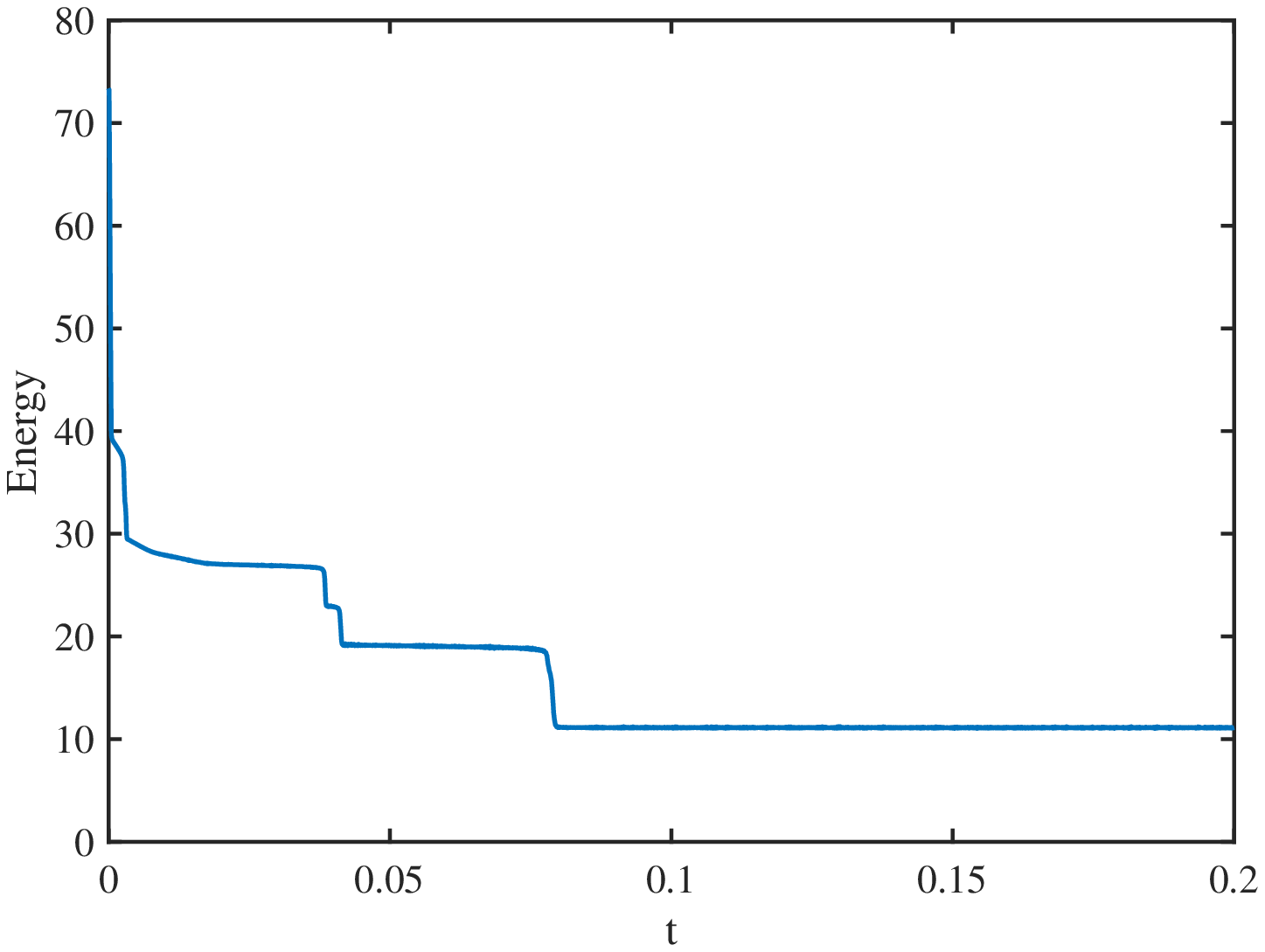}
		\includegraphics[height=0.28\textwidth,width=0.28\textwidth]{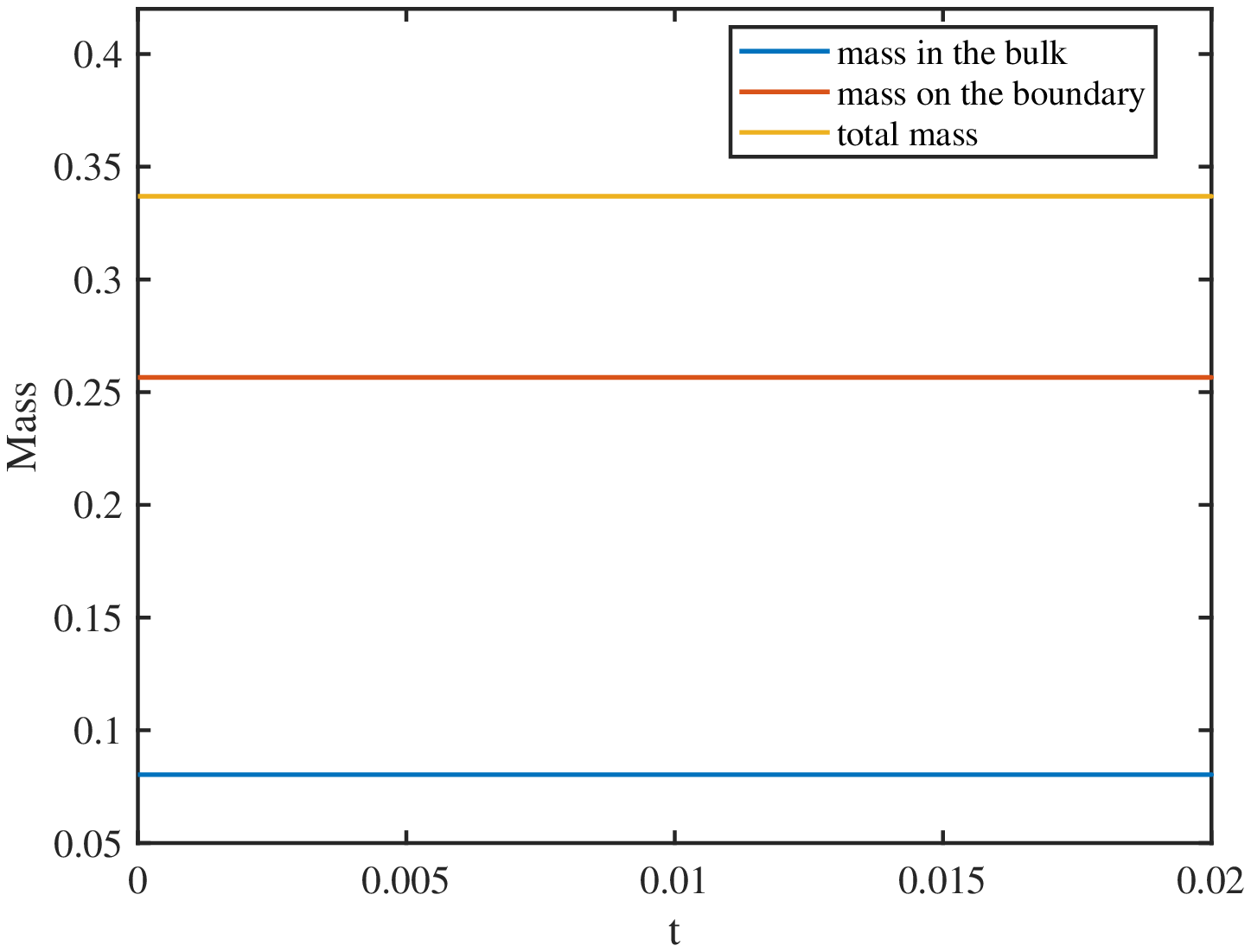}
		\caption{Energy evolution (left); Mass evolution (right) in the case of vanishing adsorption rates.}		\label{fig8}
	\end{figure}

\medskip

\noindent \textbf{Case\, 5.}\quad Here, we consider the shape deformation of a droplet. A square droplet is placed in the
area $[0,1]^2$ centered at $(0.5,0.25)$ and the length of each side
is 0.5 (as shown in Figure \ref{fig9}). The internal phase of the droplet is set to $1$ and the external phase
is set to $-1$. The forms of $F$ and $G$ are taken as regular double well potential functions (\ref{FGdwell}). The parameters
are set as $\varepsilon=\delta=0.02,\, \kappa=0.02$. The stabilized parameters are chosen as $A_1=A_2=5,\, B_1=B_2=100$.
We use the time step $\tau=2\times 10^{-4}$ and the spacial size $h=0.01$ to simulate the deformation of droplets
from $t=0$ to $t=0.5$.
\begin{figure}[h]
	\centering
	\includegraphics[height=0.28\textwidth,width=0.28\textwidth]{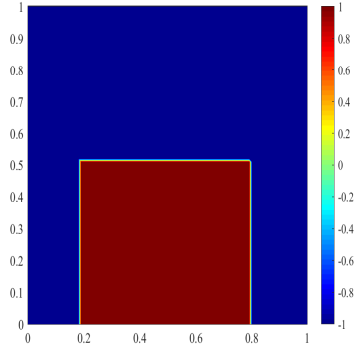}
	\caption{The initial data of the square shaped droplet.}
	\label{fig9}
\end{figure}

The deformation of droplets at time $t= 0.002,\, 0.01, \,0.02,\, 0.1,\, 0.2$ and $0.5$ are shown in Figure \ref{fig10}.
It is seen that the square droplet is  smoothed around the two up corners of the initial structure.
Then it gradually evolves into circular droplets with equal average
curvature. In addition, under the constraint of mass conservation, the contact area between the droplet and the
boundary almost keeps unchanged with time, which is consistent with the previous work \cite{knopf2020phase}. The development of
energy and mass are shown in Figure \ref{fig11}. It can be observed that the energy decreases quickly at the initial stage,
  which corresponds to the quick deformation of the square to the smoothed structure. Also we provided the energy curve from
  $t=0$ to $t=0.1$, again revealing the conservation of mass on the region and boundary respectively.

\begin{figure}[H]
	\centering
	\includegraphics[height=0.28\textwidth,width=0.28\textwidth]{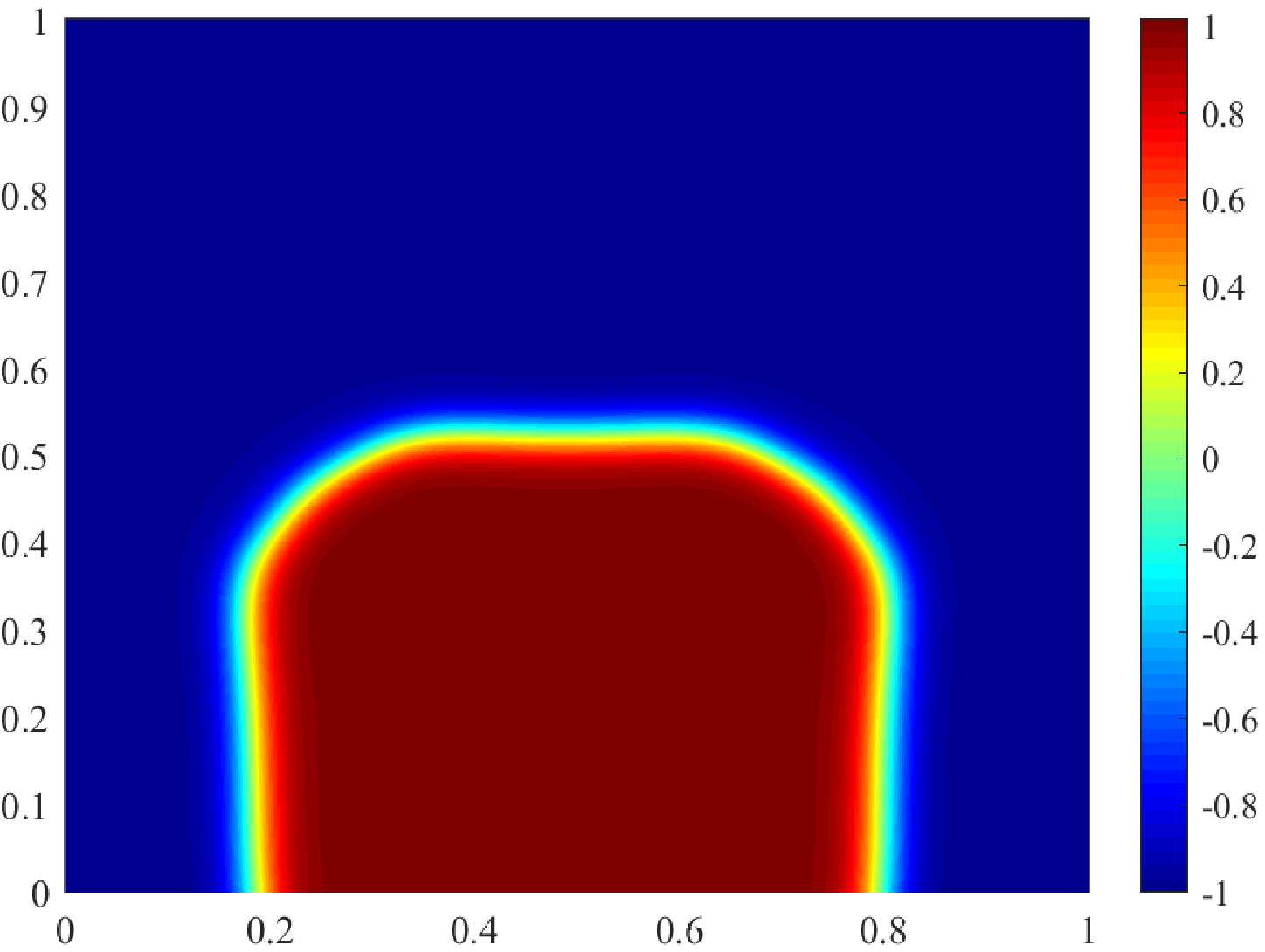}
	\includegraphics[height=0.28\textwidth,width=0.28\textwidth]{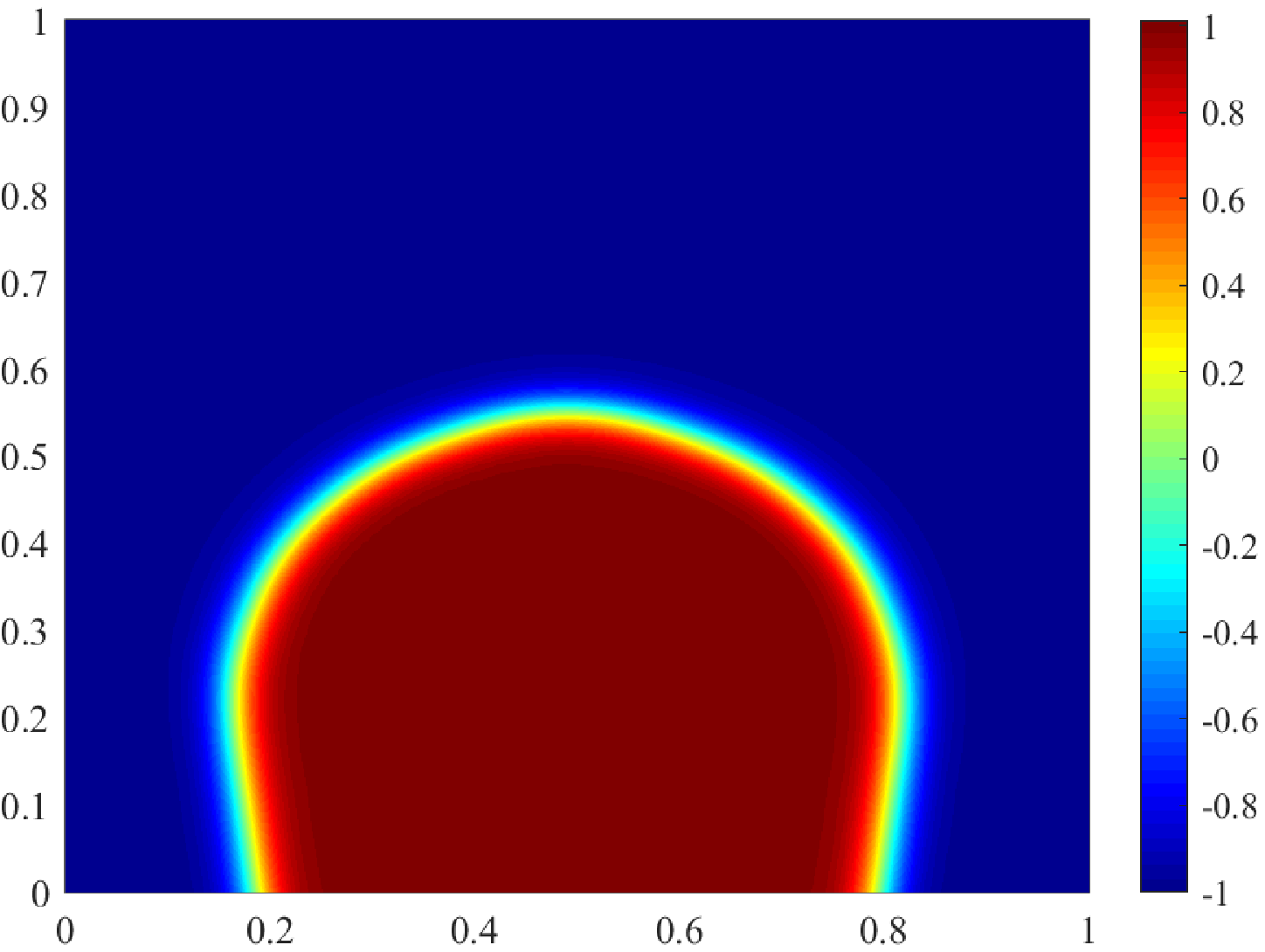}
	\includegraphics[height=0.28\textwidth,width=0.28\textwidth]{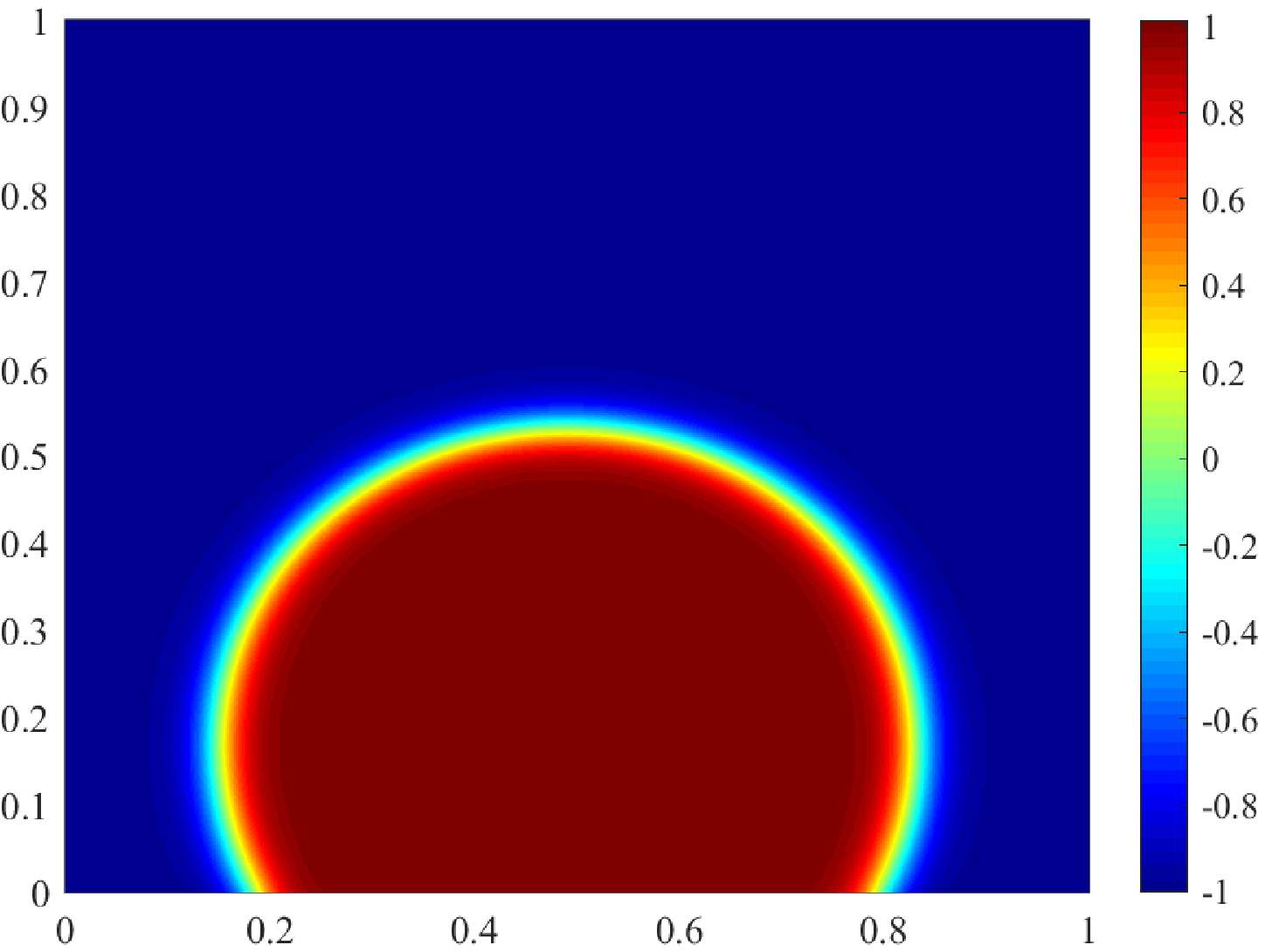}
	\includegraphics[height=0.28\textwidth,width=0.28\textwidth]{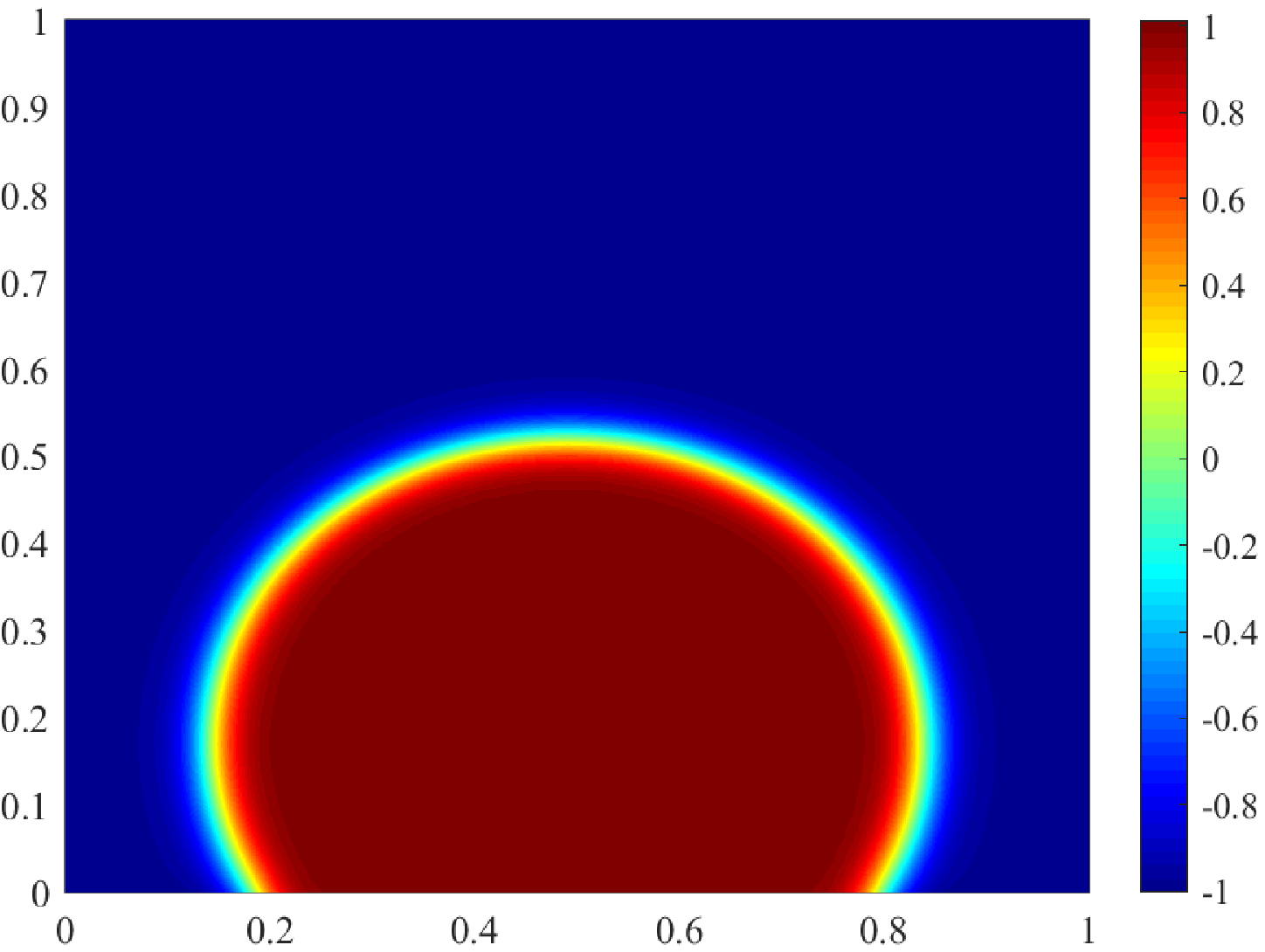}
	\includegraphics[height=0.28\textwidth,width=0.28\textwidth]{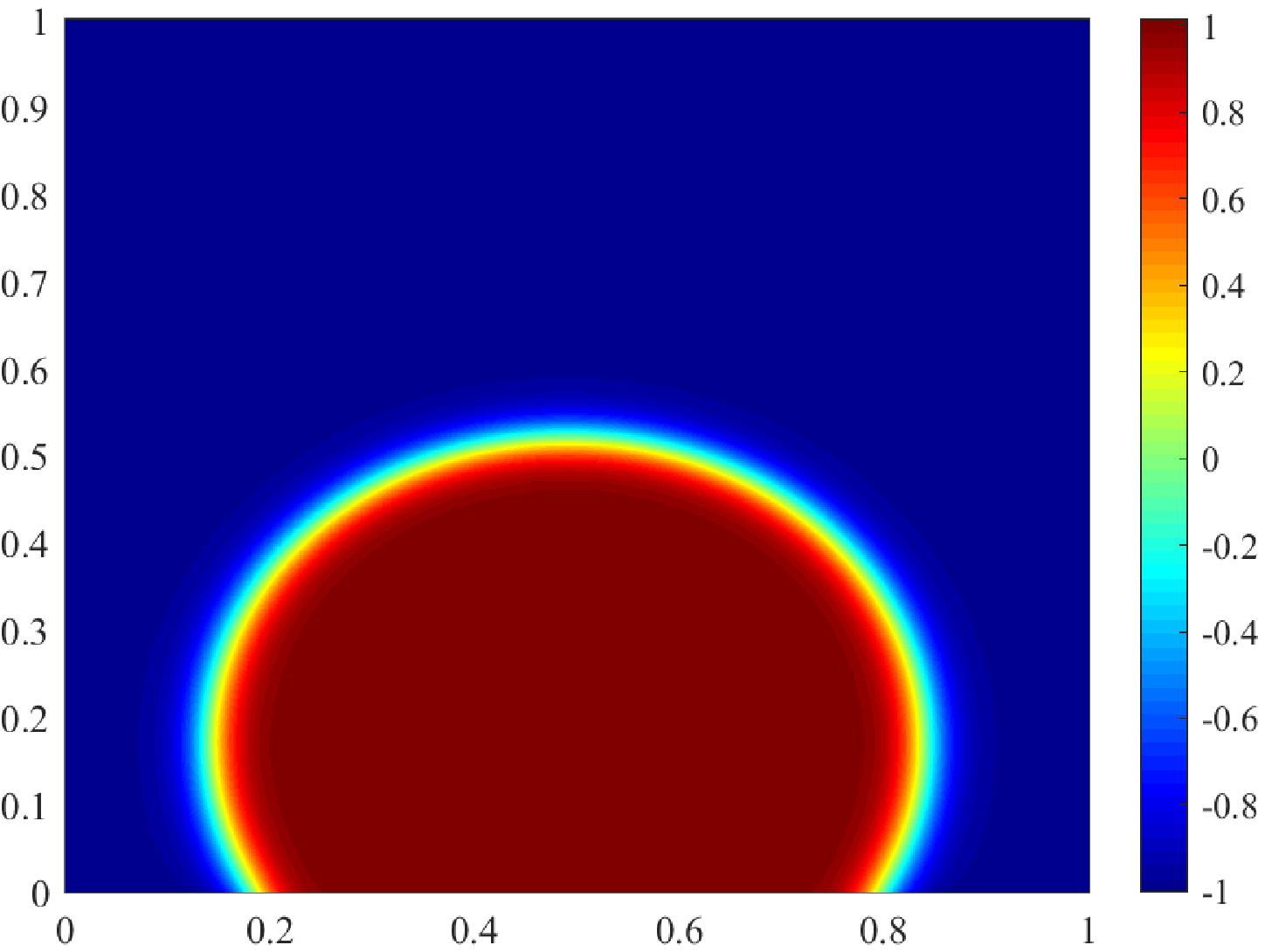}
	\includegraphics[height=0.28\textwidth,width=0.28\textwidth]{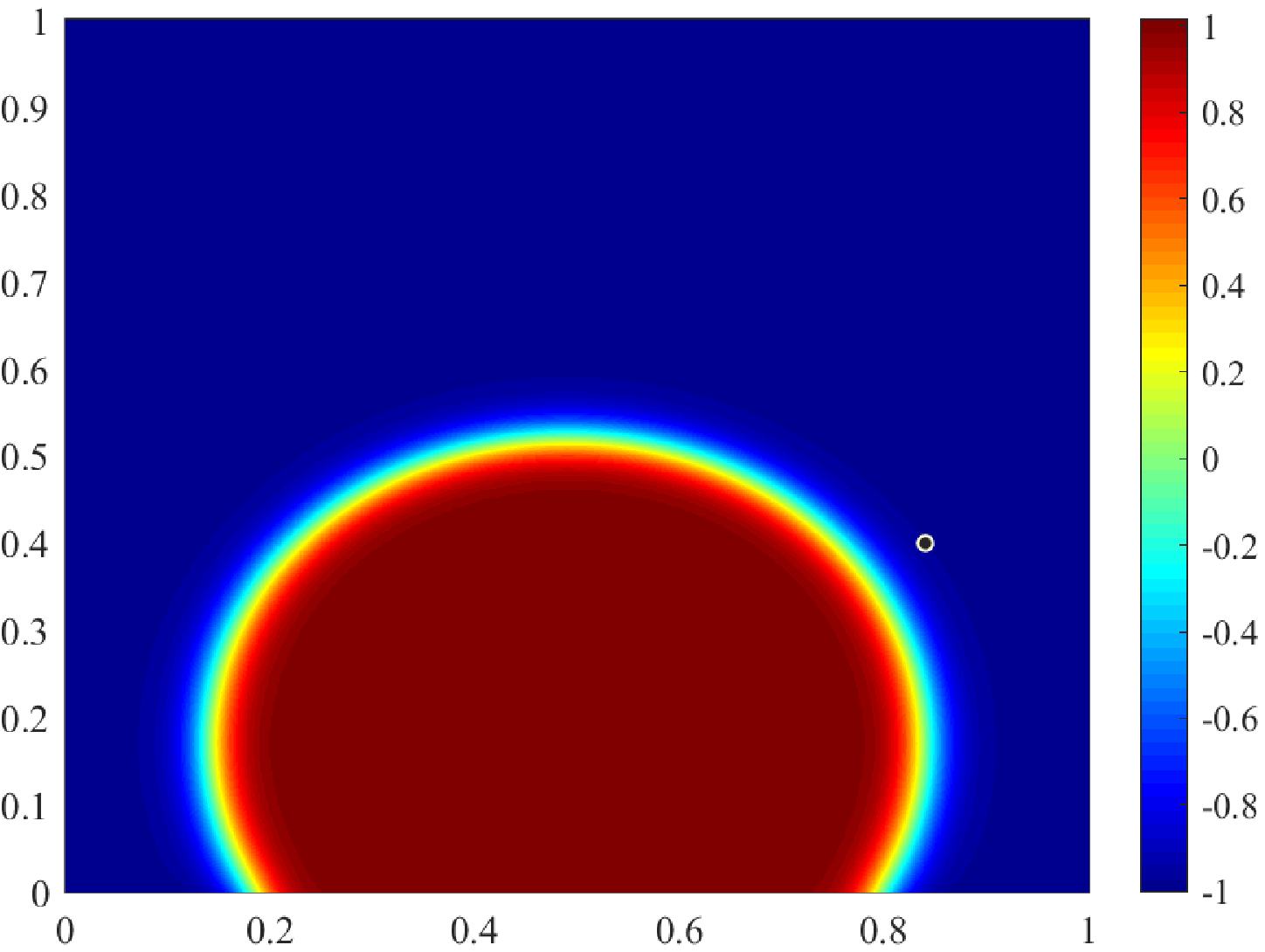}
	\caption{Snapshots of the phase variable $\phi$ at time $t= 0.002,\, 0.01, \,0.02,\, 0.1,\, 0.2,\, 0.5$ with
double well potential functions.}
	\label{fig10}
\end{figure}

	\begin{figure}[H]
		\centering
		\includegraphics[height=0.282\textwidth,width=0.28\textwidth]{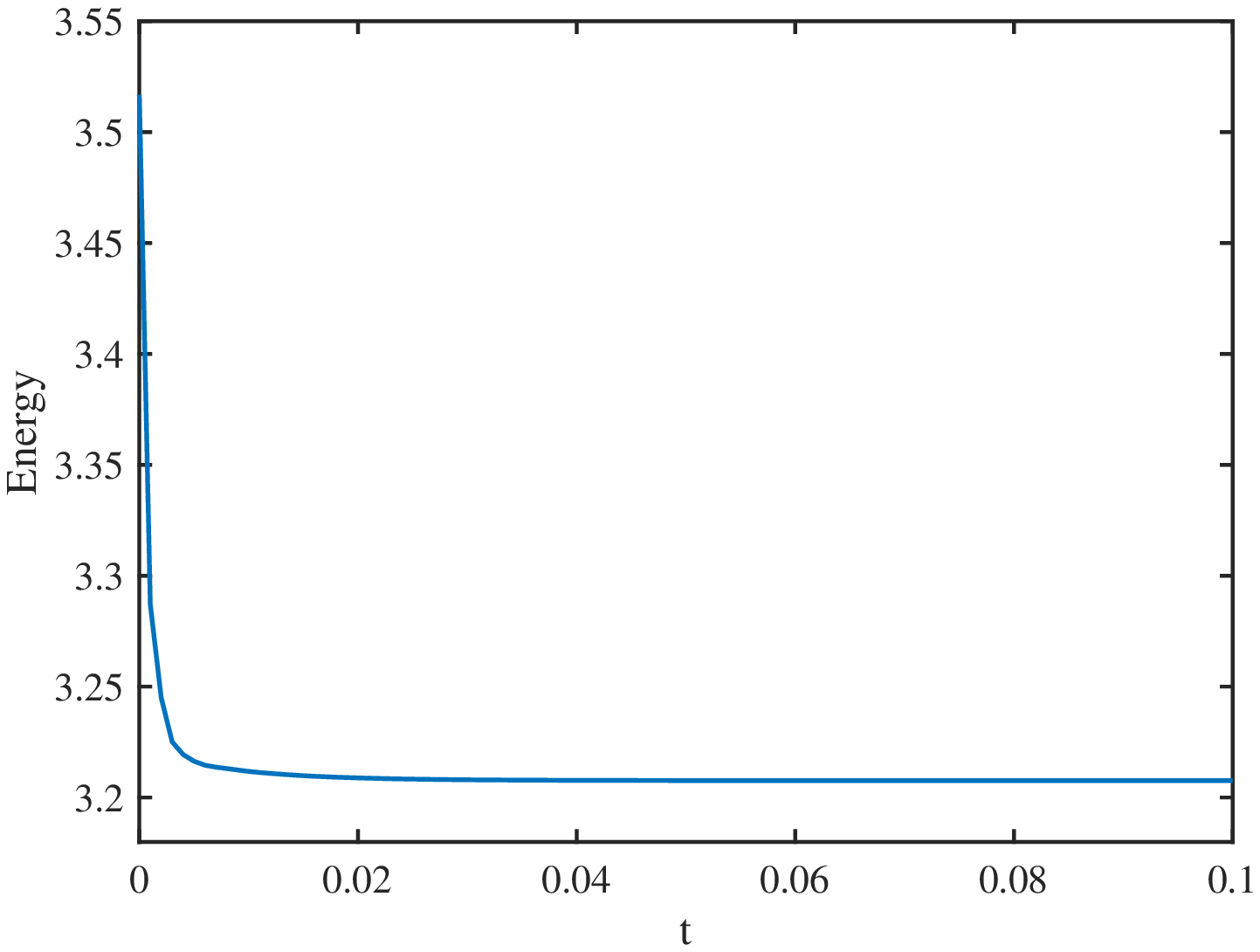}
		\includegraphics[height=0.28\textwidth,width=0.28\textwidth]{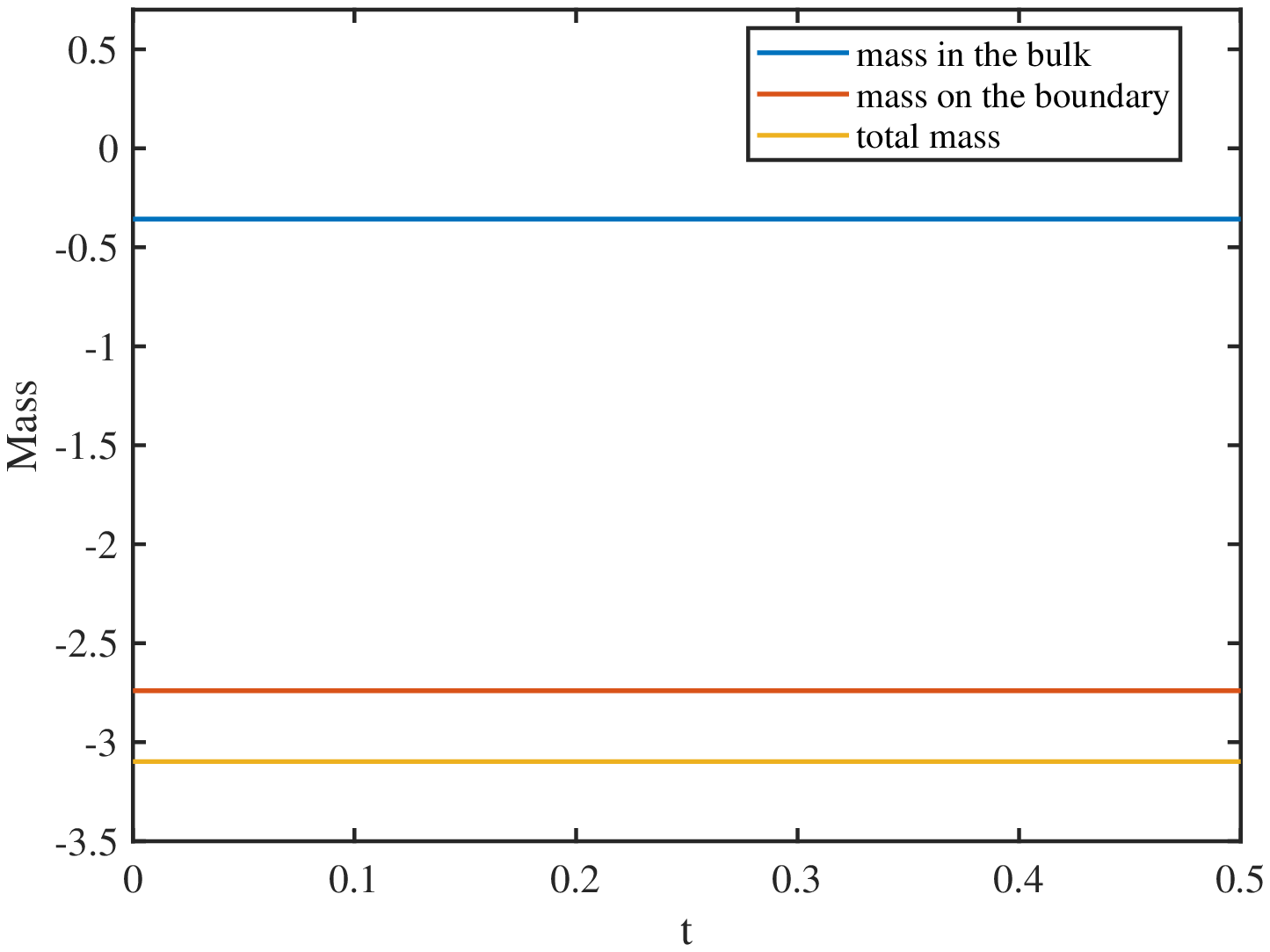}
		\caption{Energy evolution (left); Mass evolution (right) with the initial data of the square shaped droplet.}		
		\label{fig11}
	\end{figure}

\subsection{Cases with different potential functions}
In the previous numerical experiments, the surface potential function $G$ takes the form of polynomial. Here, we
consider different forms.

\medskip

\noindent \textbf{Case 1.}\quad We consider the typical moving contact line problem
\begin{eqnarray}
\label{G2}
 G(\phi)=\frac{\gamma}{2}\cos(\theta_s)\sin(\frac{\pi}{2}\phi),
\end{eqnarray}
where $\gamma=\frac{2\sqrt{2}}{2}$, $\theta_s$ is the static contact angle ($\cos\theta_s=\pm \frac12$ below).
$\tau=10^{-5},\,h=0.01$ and other parameters are the same as those in the previous samples.

 We show the energy curve and mass curve for $0\leq t\leq 0.01$ in Figure \ref{fig12}. It is found that the case
   $\cos\theta_s=-\frac{1}{2}$ takes longer time to reach the steady state than the case of $\cos\theta_s=\frac{1}{2}$.
 For both cases, the mass in the bulk and on the boundary keep unchanged throughout the computation.
In Figures \ref{fig13} and and \ref{fig14}, we present the phase contours for at  $t=0.0003,\,0.0005,\,0.001,\,0.002,\,0.008$ and $0.01$
corresponding to $\cos\theta_s=\frac{1}{2}$ and $\cos\theta_s=-\frac{1}{2}$, respectively.
Driven by the surface potential function (\ref{G2}), the square droplet also tends to change into a circle
with time, see Figure \ref{fig13} and \ref{fig14}. The same phenomena occurs in the Case 5 in Section \ref{SubSecIC}. However,
it is noted that the contact area between the droplet and
the boundary will change, which is different from the case of double well potential (\ref{FGdwell}).
Therefore, due to the mass conservation on the region and boundary respectively, the value of $\phi$ and $\psi$ are not
limited to the interval $[-1,1] $.

\begin{figure}[H]
	\centering
	\includegraphics[height=0.28\textwidth,width=0.28\textwidth]{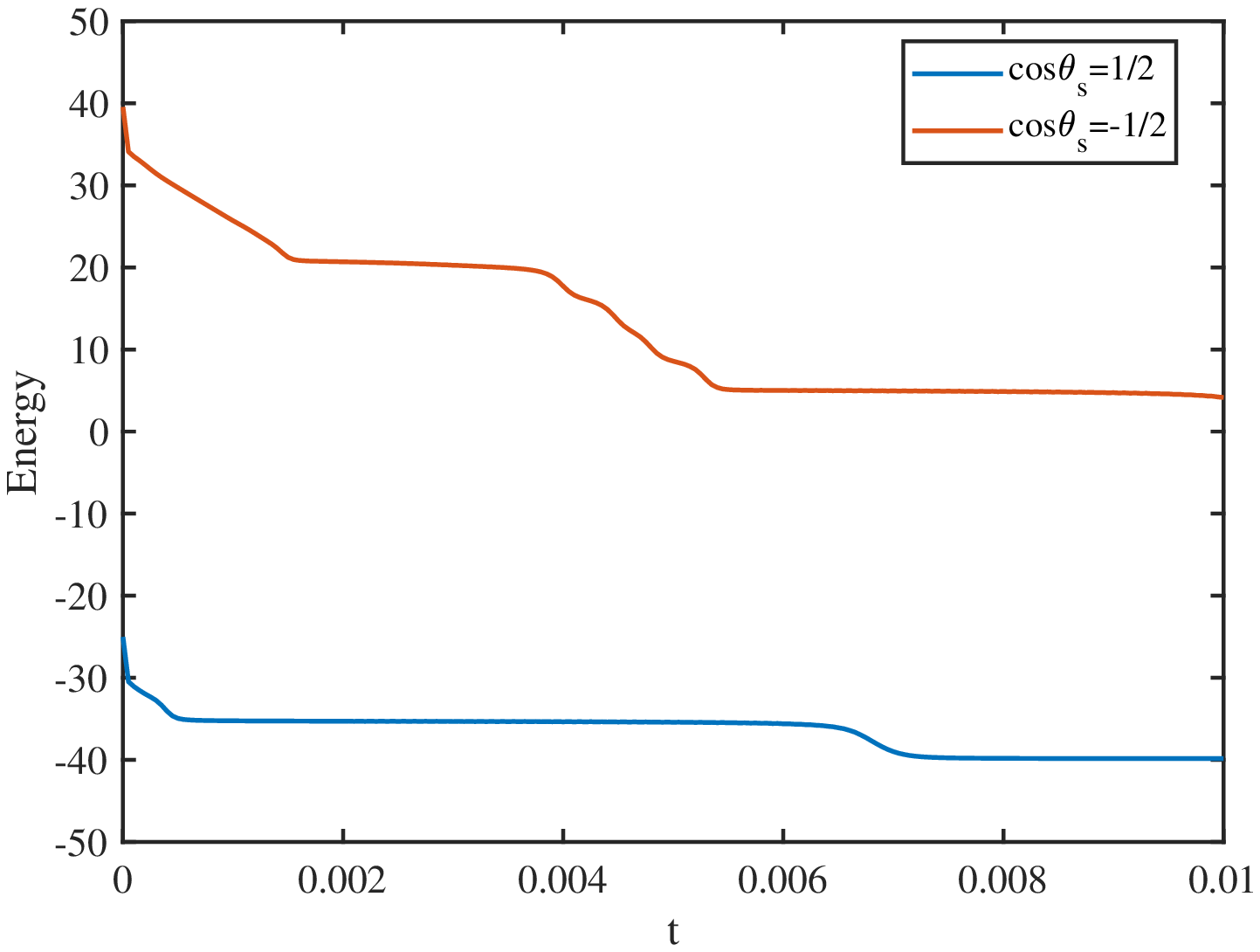}
	\includegraphics[height=0.28\textwidth,width=0.28\textwidth]{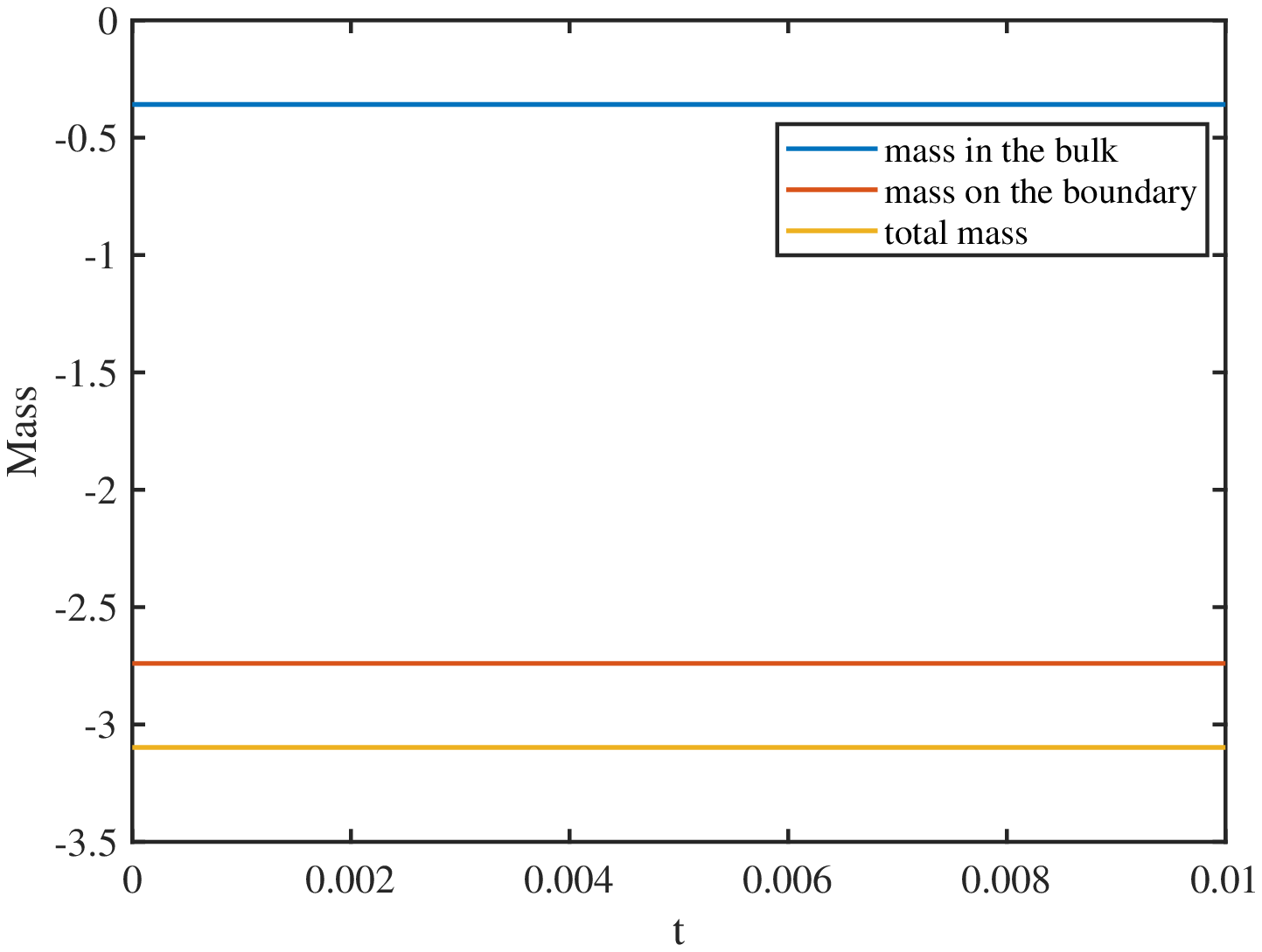}
	\includegraphics[height=0.28\textwidth,width=0.28\textwidth]{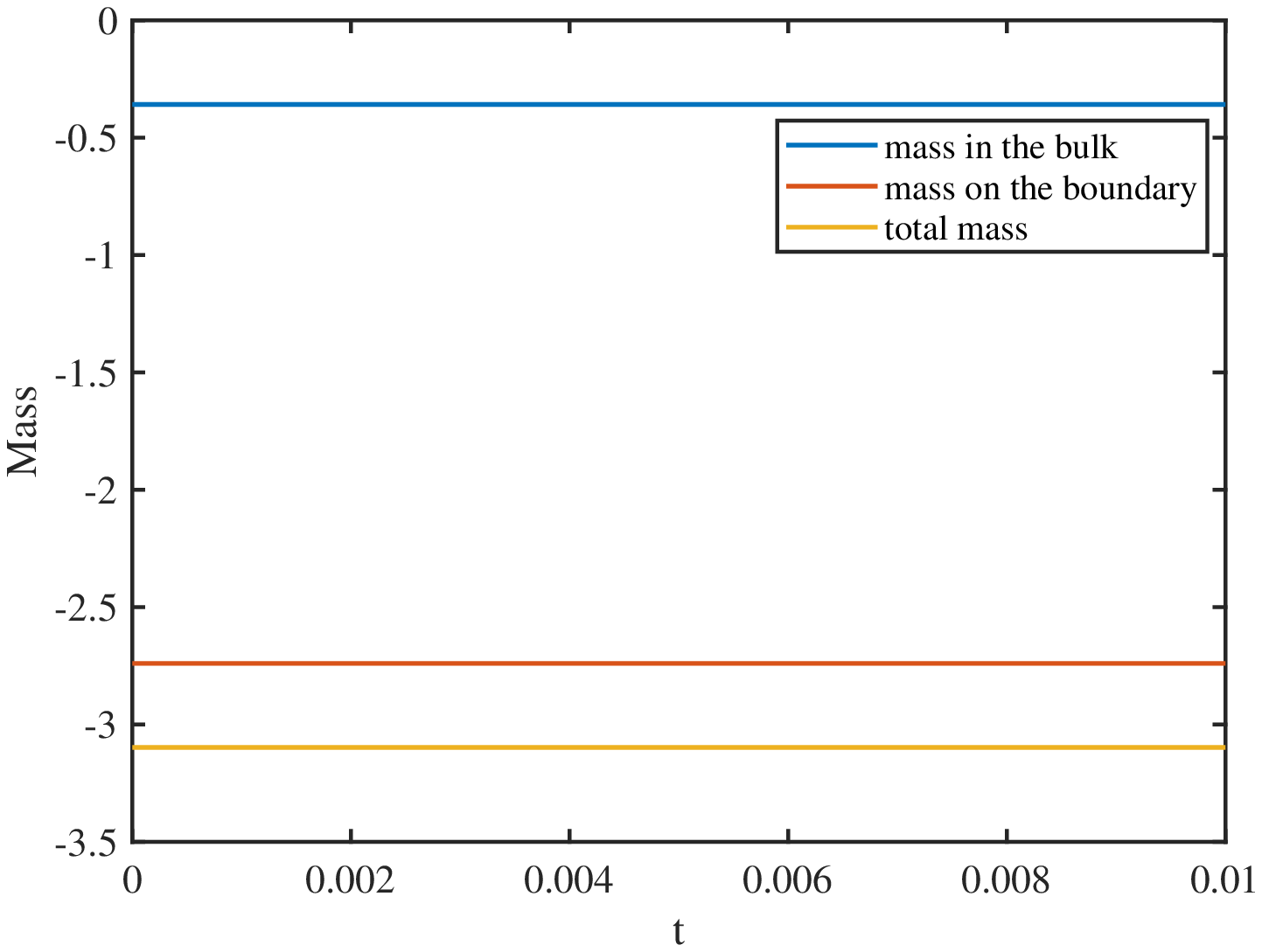}
	\caption{Energy evolution of Liu-Wu model with surface potential energy (\ref{G2}) (left);
Mass evolution of Liu-Wu model when the $\cos\theta_s=\frac12$ (middle) and $\cos\theta_s=-\frac12$ (right).}		
	\label{fig12}
\end{figure}

	\begin{figure}[H]
		\centering
		\includegraphics[height=0.28\textwidth,width=0.28\textwidth]{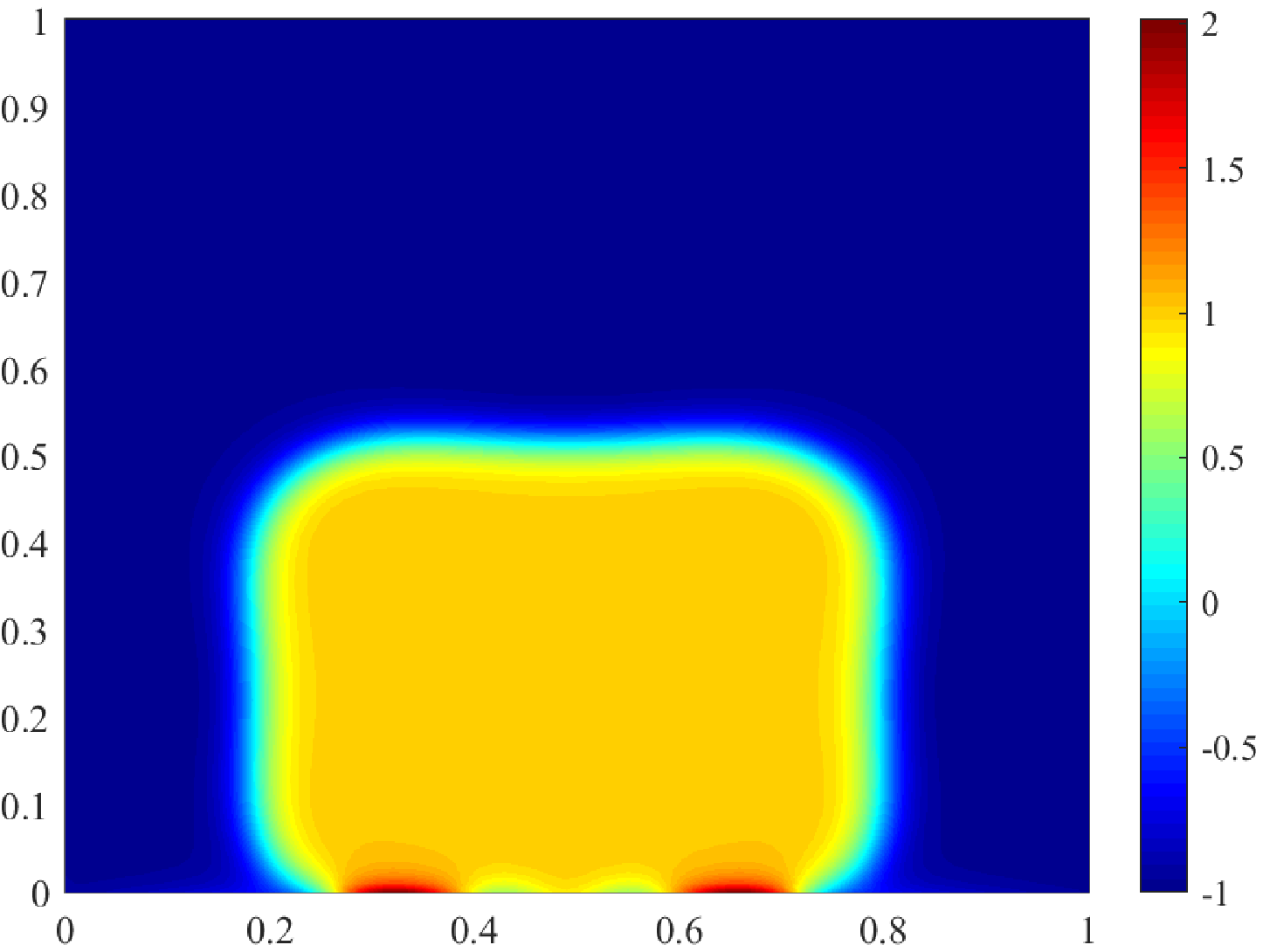}
		\includegraphics[height=0.28\textwidth,width=0.28\textwidth]{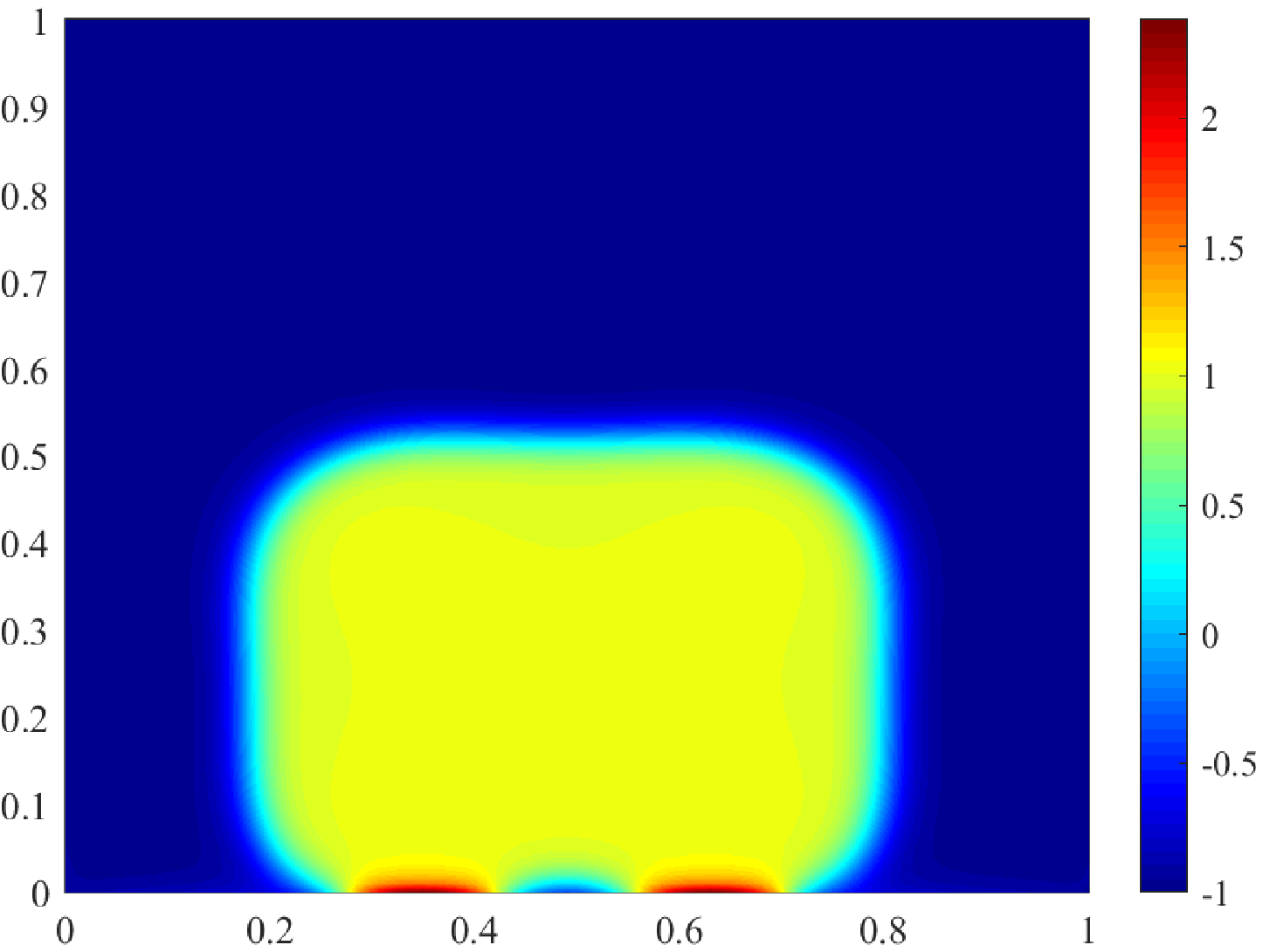}
		\includegraphics[height=0.28\textwidth,width=0.28\textwidth]{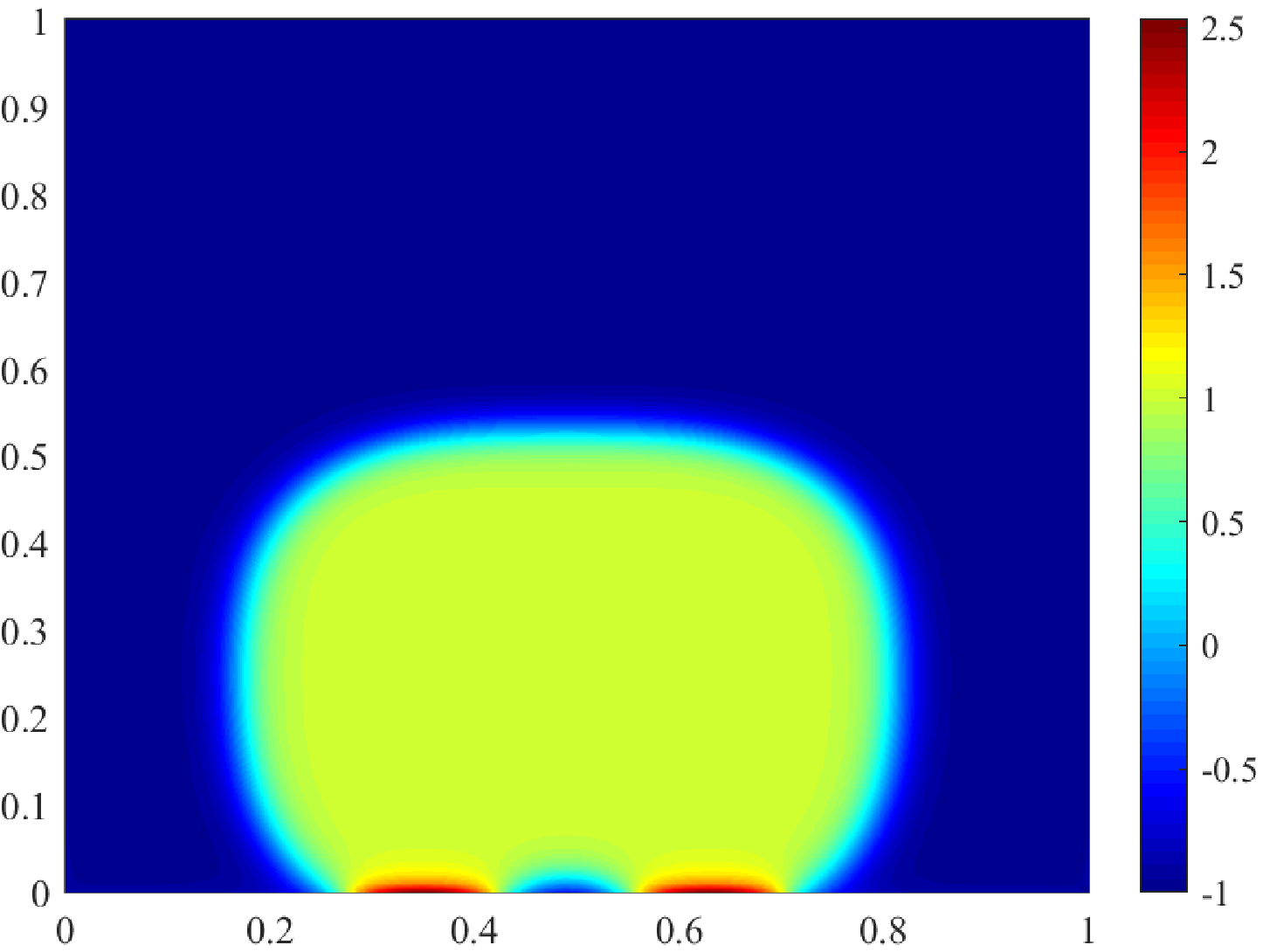}
		\includegraphics[height=0.28\textwidth,width=0.28\textwidth]{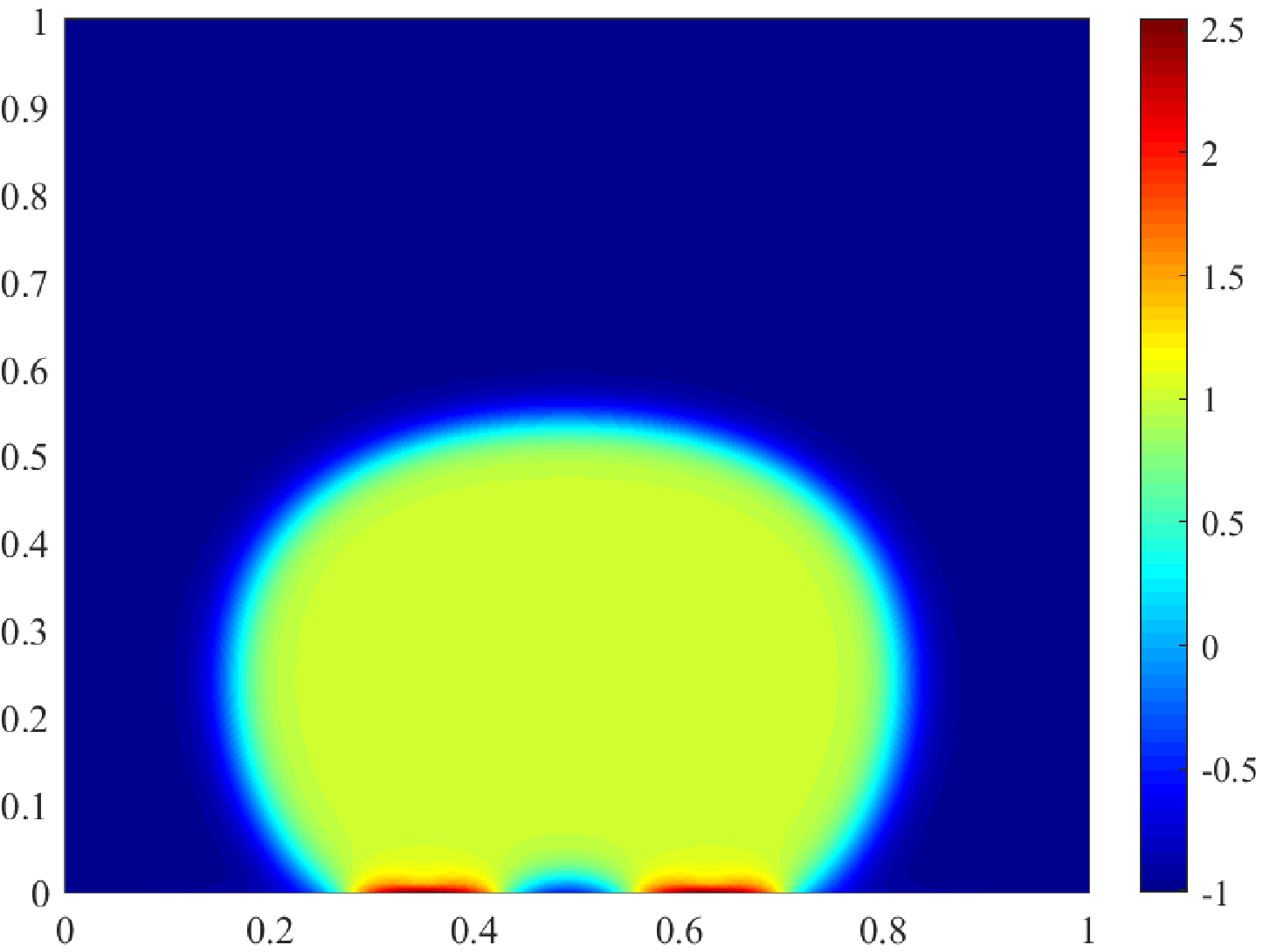}
		\includegraphics[height=0.28\textwidth,width=0.28\textwidth]{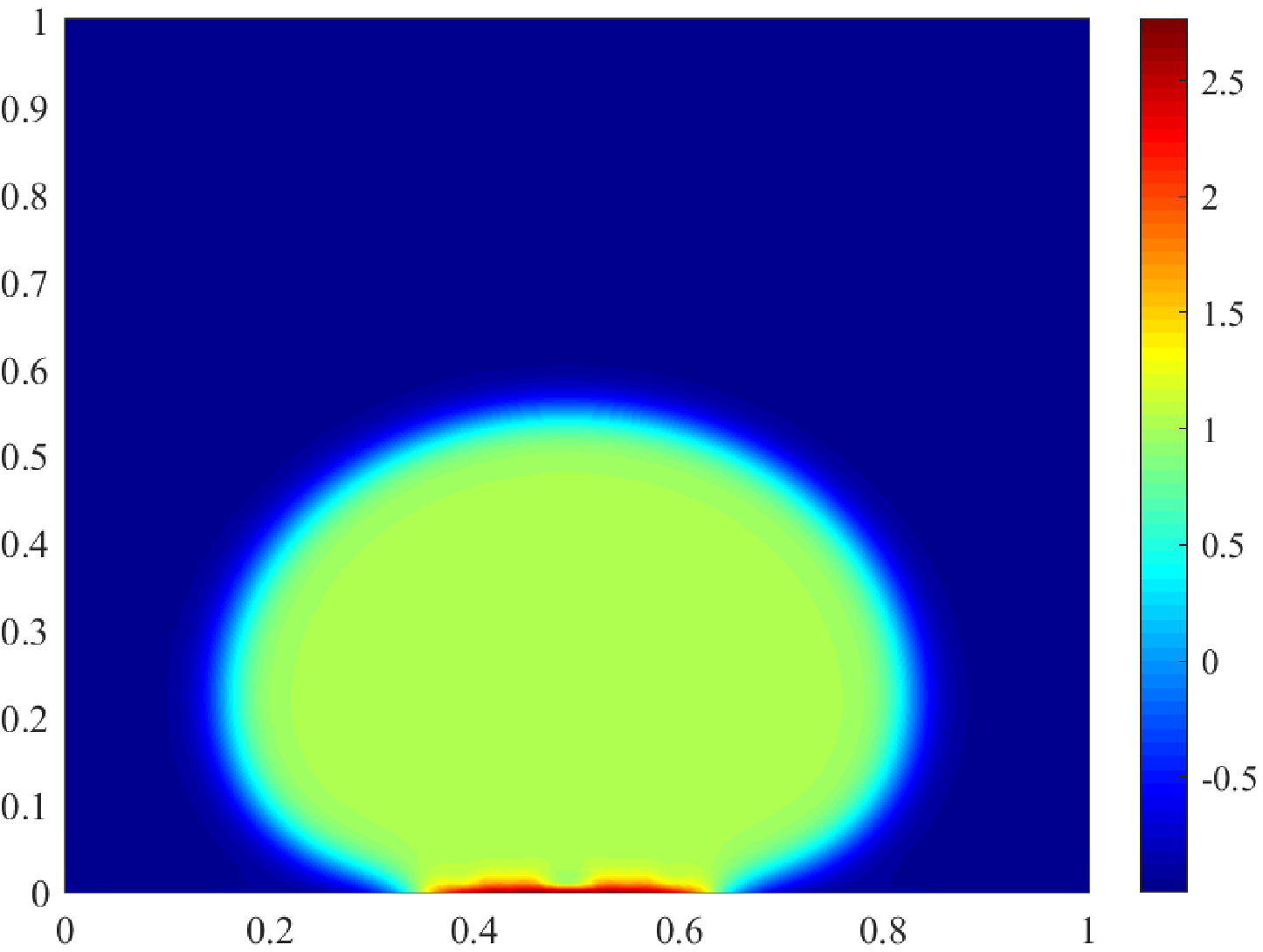}
		\includegraphics[height=0.28\textwidth,width=0.28\textwidth]{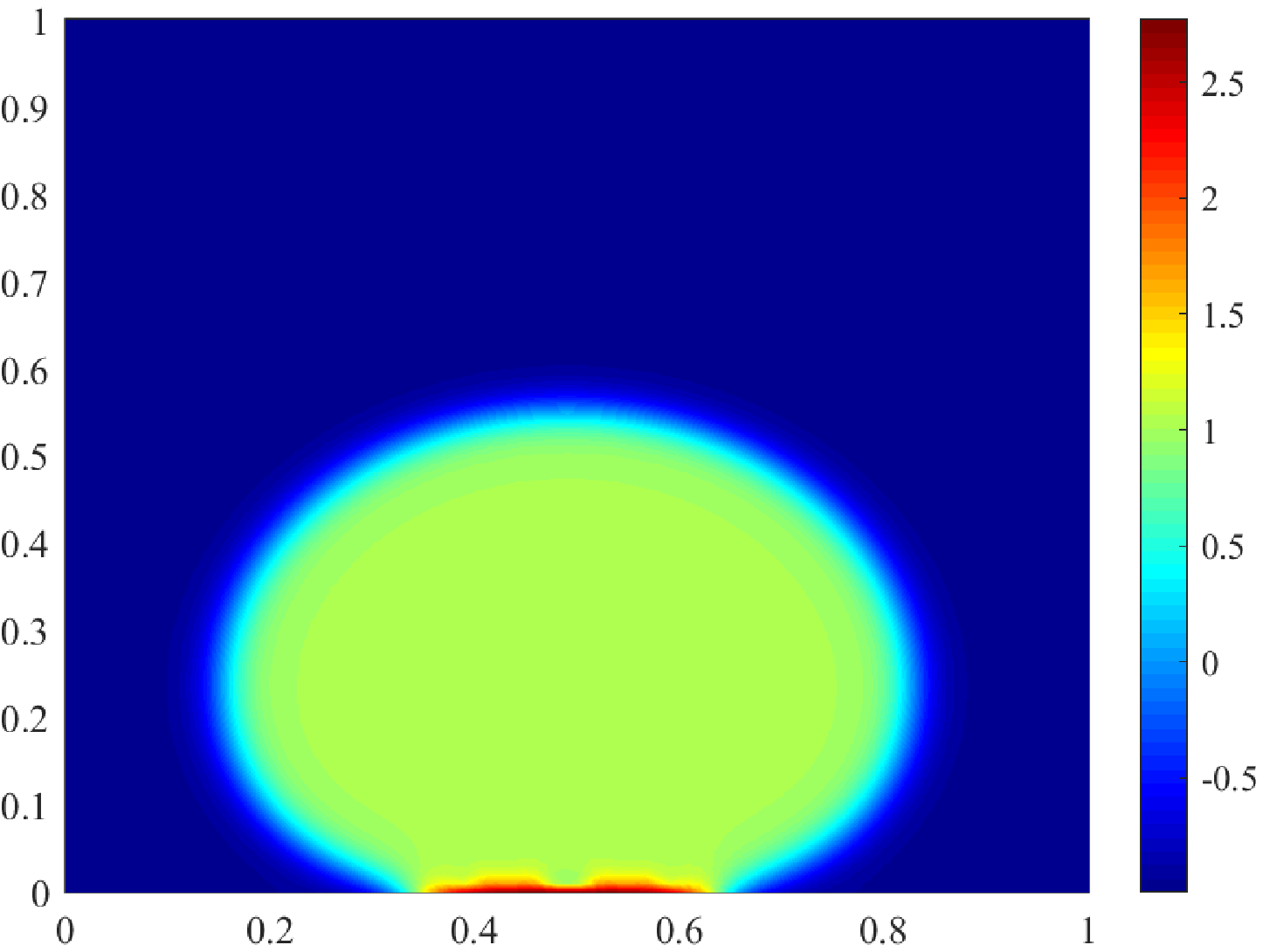}
		\caption{Snapshots of the phase variable $\phi$ at time $t=0.0003,\,0.0005,\,0.001,\,0.002,\,0.008,\,0.01$,\, $(\cos\theta_s=\frac12)$.}
		\label{fig13}
	\end{figure}

	\begin{figure}[H]
		\centering
		\includegraphics[height=0.28\textwidth,width=0.28\textwidth]{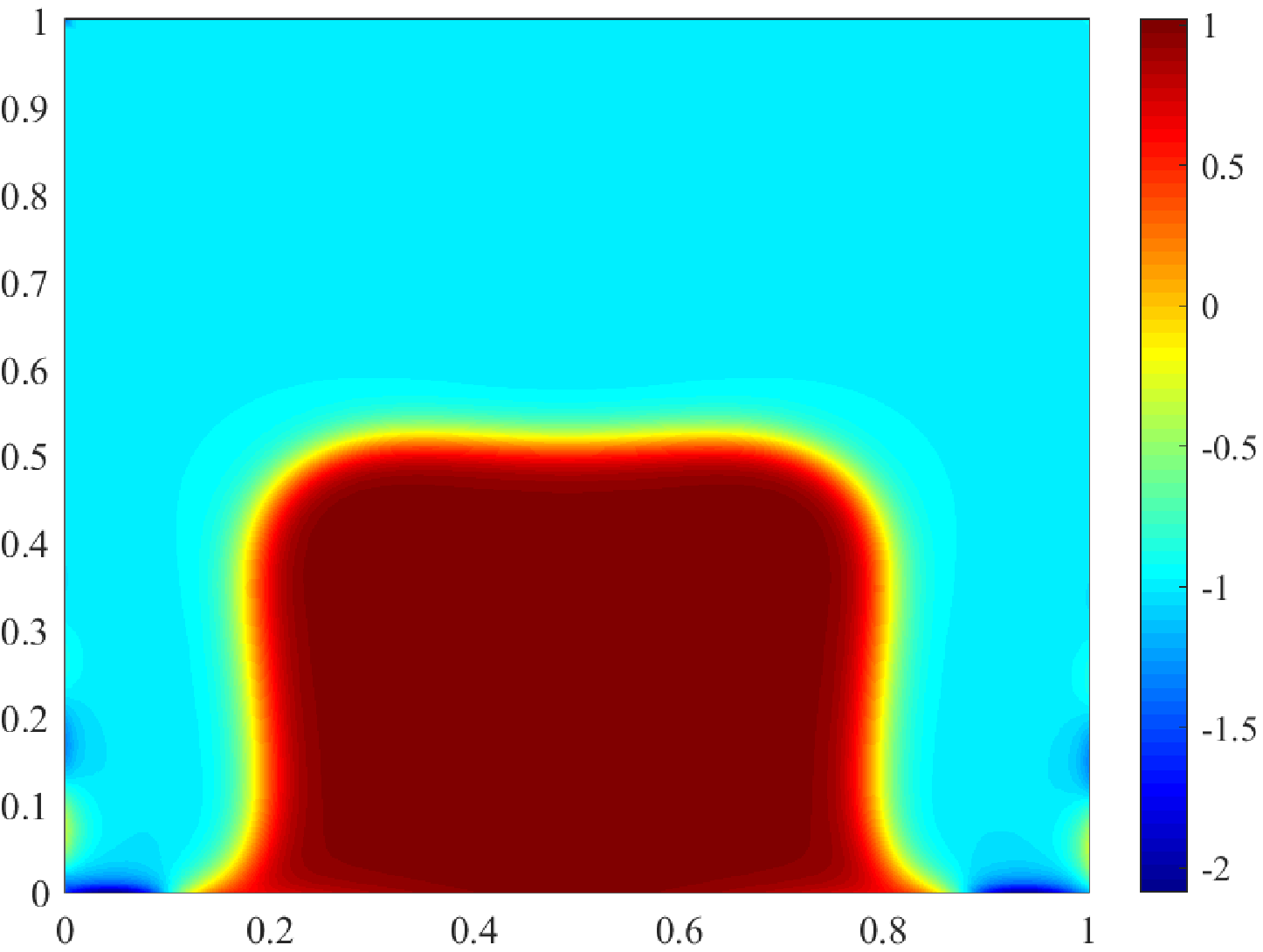}
		\includegraphics[height=0.28\textwidth,width=0.28\textwidth]{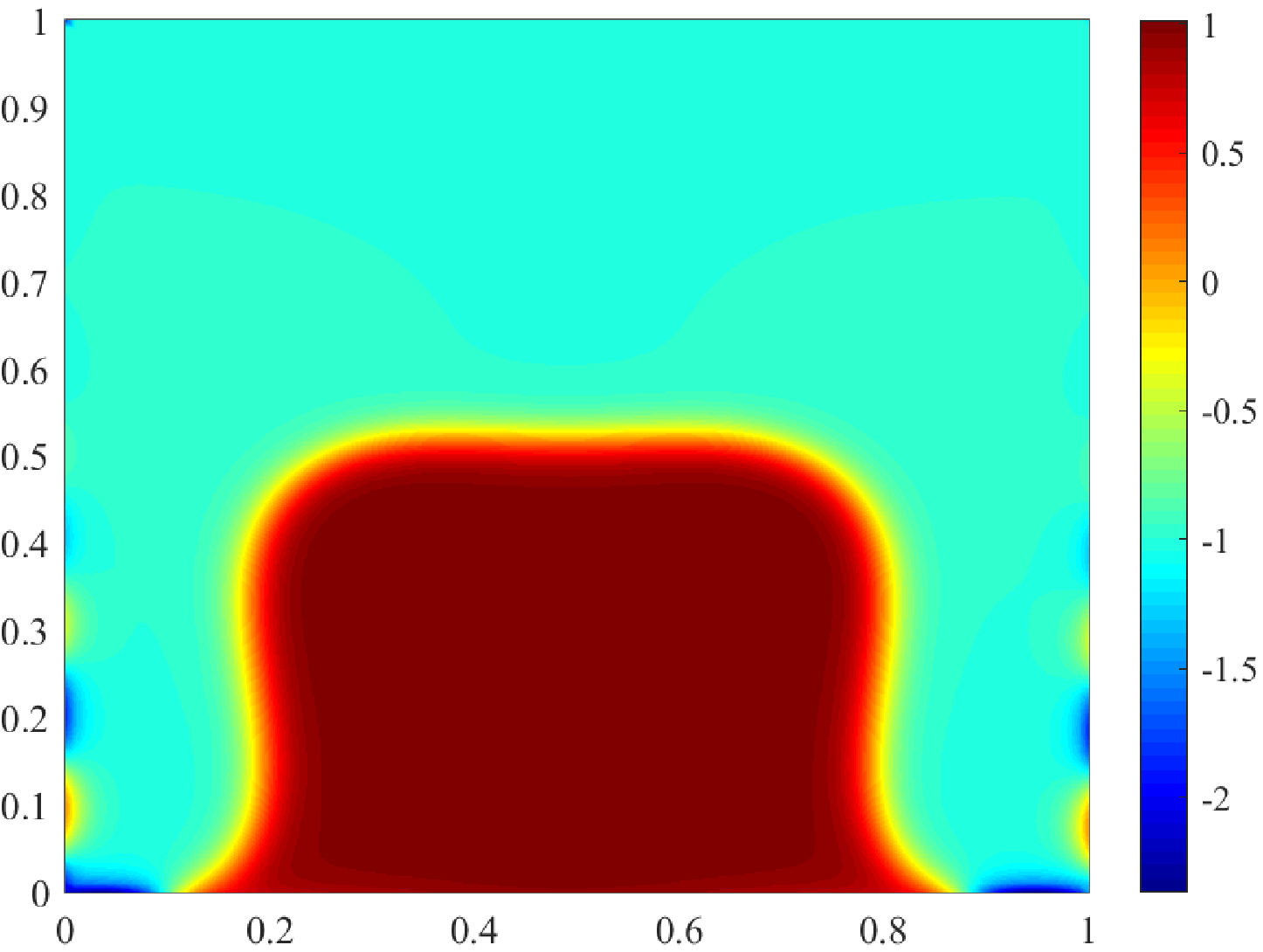}
		\includegraphics[height=0.28\textwidth,width=0.28\textwidth]{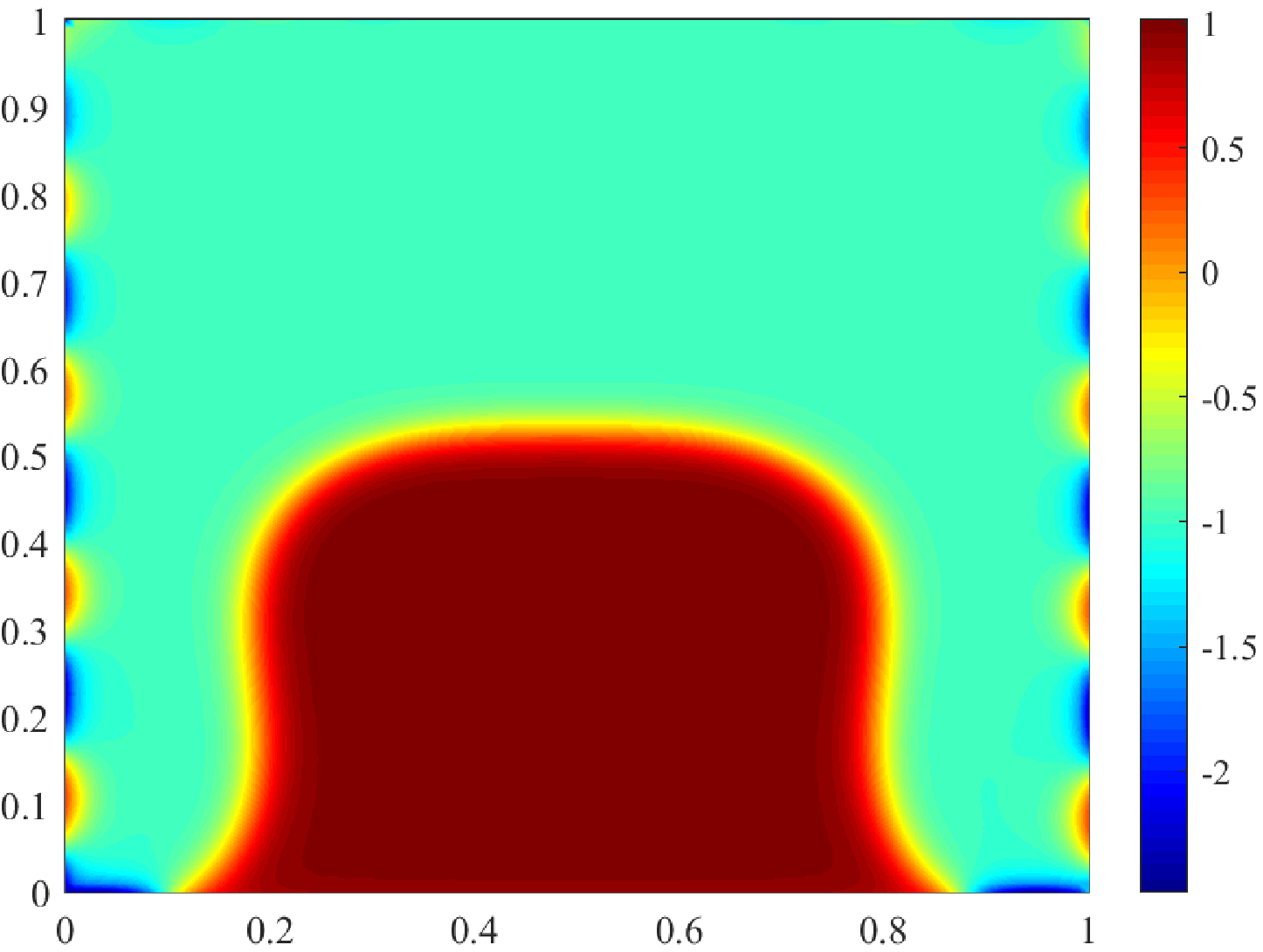}
		\includegraphics[height=0.28\textwidth,width=0.28\textwidth]{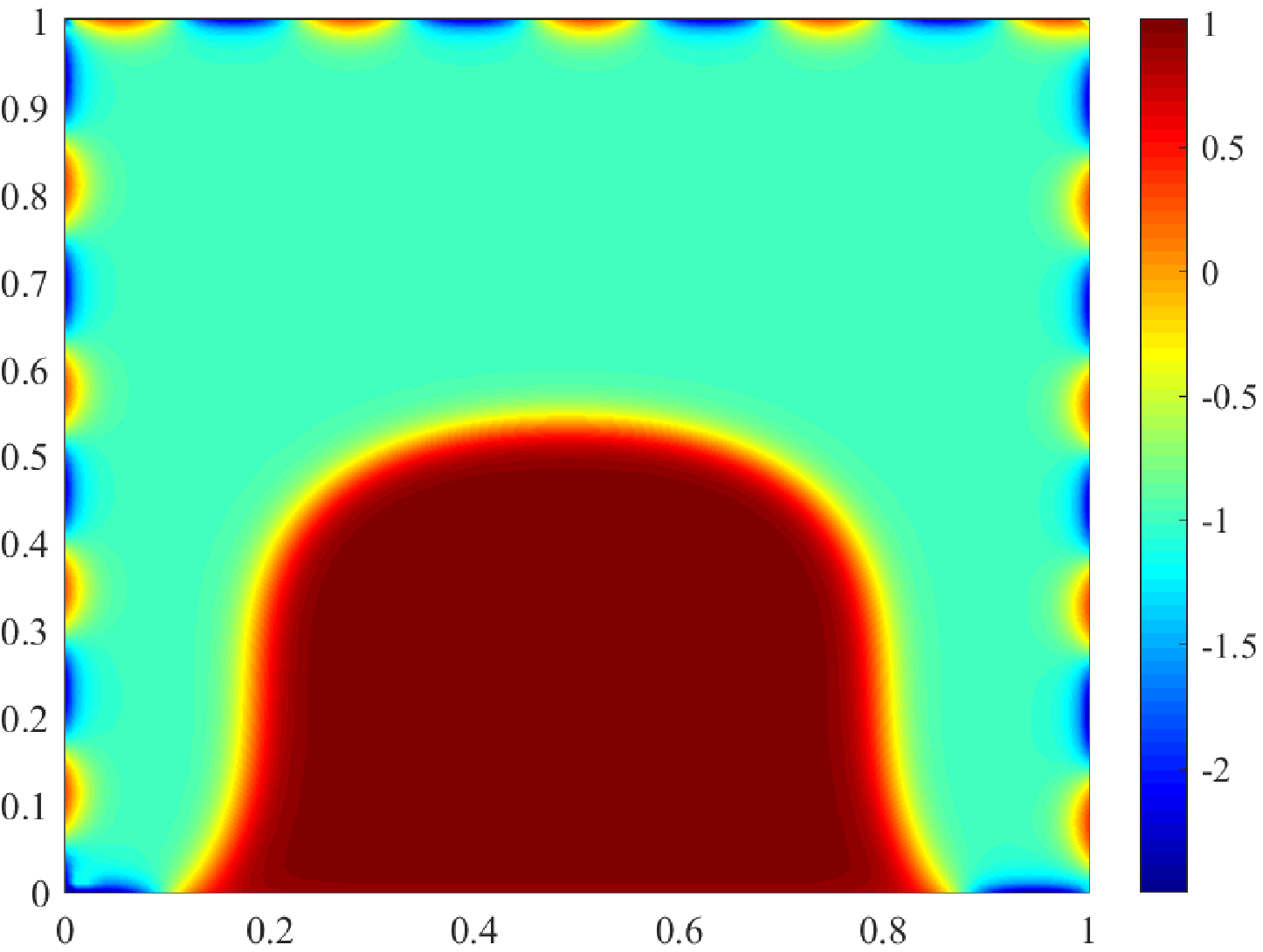}
		\includegraphics[height=0.28\textwidth,width=0.28\textwidth]{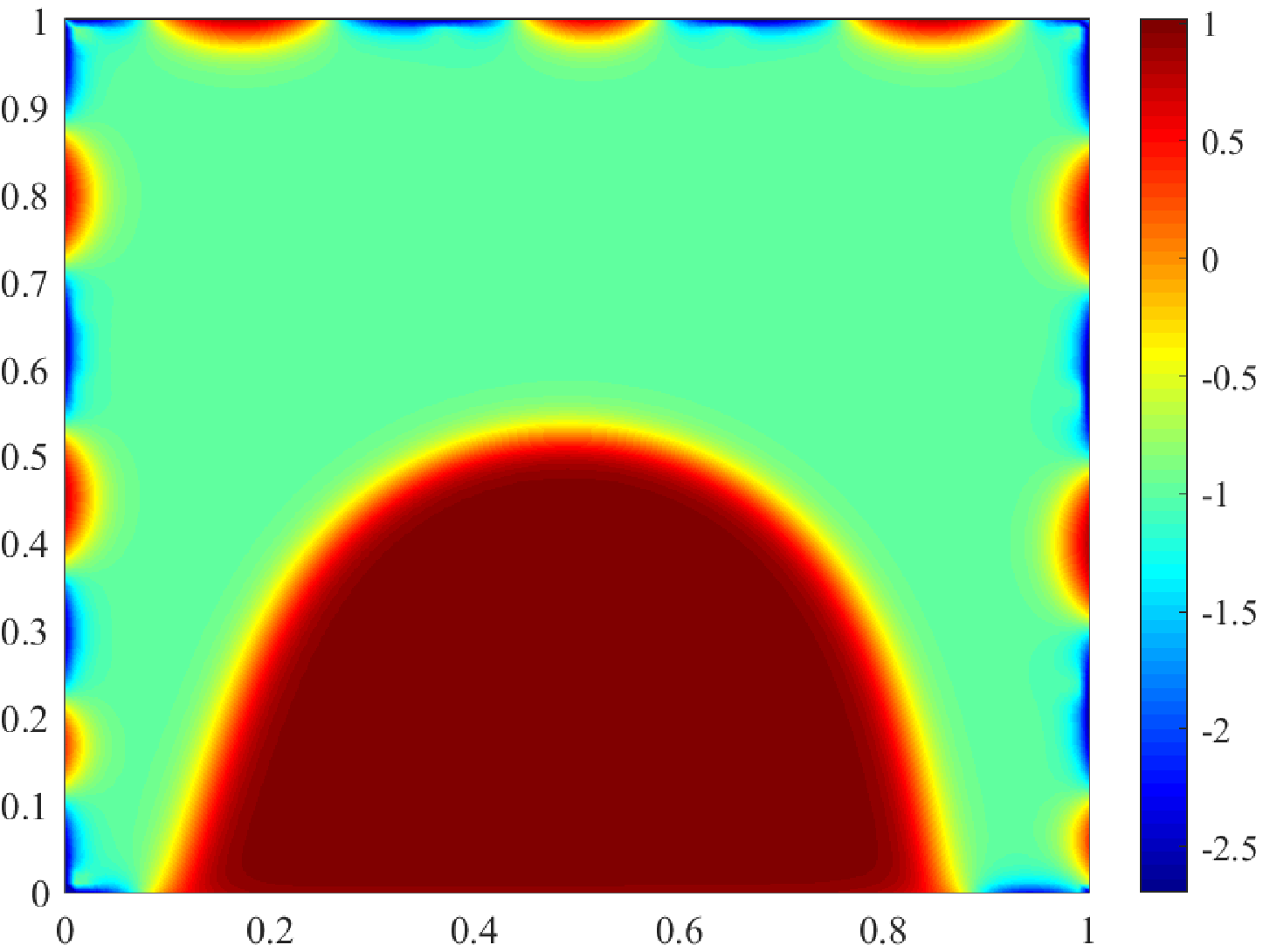}
		\includegraphics[height=0.28\textwidth,width=0.28\textwidth]{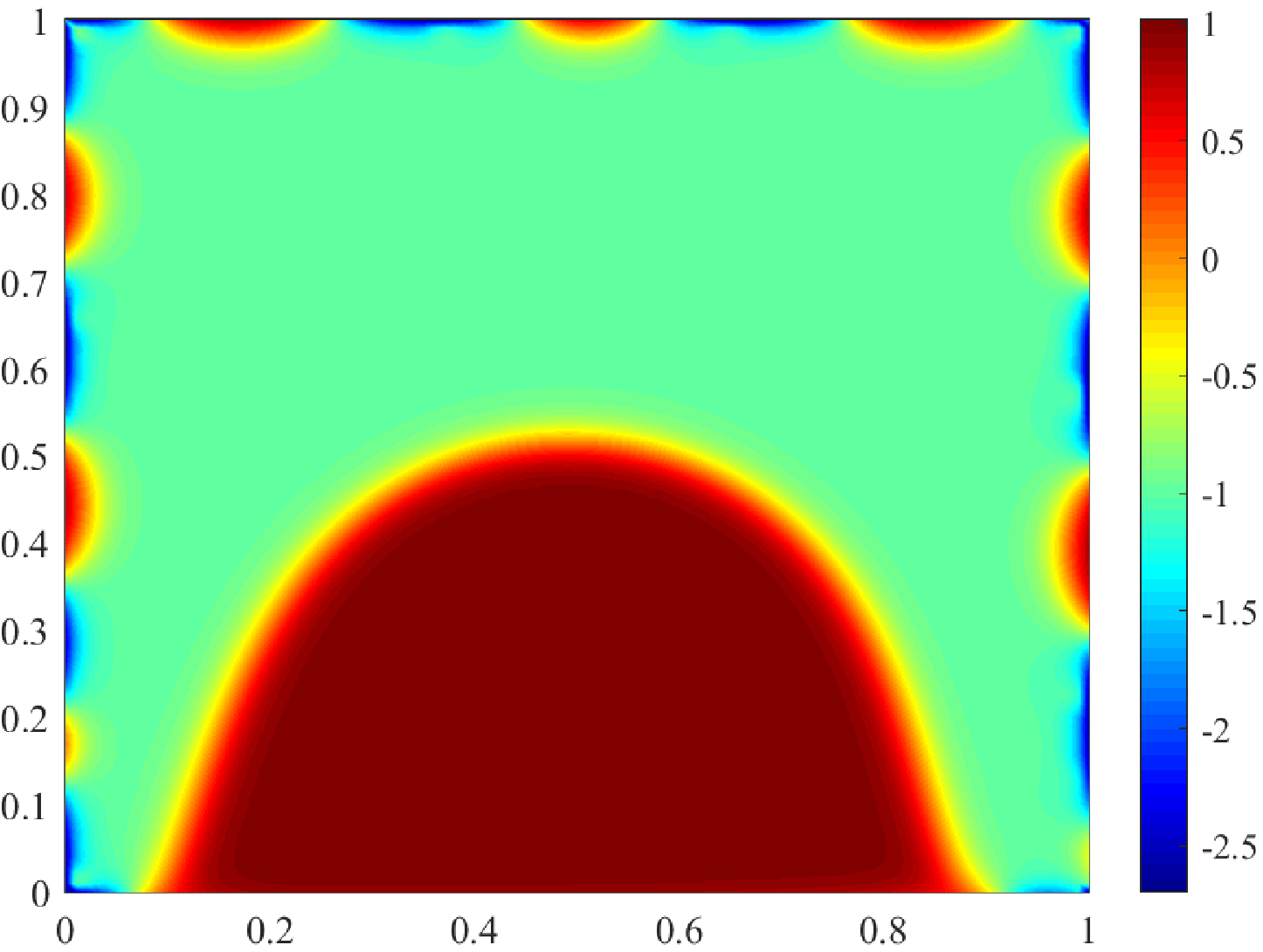}
		\caption{Snapshots of the phase variable $\phi$ at time $t=0.0003,\,0.0005,\,0.001,\,0.002,\,0.008,\,0.01$,\, $(\cos\theta_s=-\frac12)$.}
		\label{fig14}
	\end{figure}

\noindent\textbf{Case 2.}\quad  The Cahn-Hilliard equation with Flory-Huggins potential is widely used to describe the
spinodal decomposition and coarsening of binary mixtures. Namely, for the bulk and surface potential, we consider
the logarithmic Flory-Huggins potential as follows,
\[
F(\phi)=\phi\ln\phi+(1-\phi)\ln(1-\phi)+\theta\phi\ln(1-\phi),
\]
\[
G(\psi)=\psi\ln\psi+(1-\psi)\ln(1-\psi)+\theta\psi\ln(1-\psi),
\]	
where the constant $\theta\textgreater 0$.
 In this case, $\phi$ and $\psi$ represent the mass concentration of one component in the bulk and on the
 boundary, rather than $\phi$ and $\psi$ as the order parameters. Therefore, the concentrations of other
 components in the bulk and on the boundary are denoted by $1-\phi$ and $1-\psi$ respectively. Therefore,
 the corresponding physical correlation interval is $(0, 1)$.
According to the work in \cite{yang2017linear}, we need the regularized logarithmic potential as follows in
order to ensure the logarithmic potential smooth enough. Precisely, for $0 < \zeta \ll 1$,
\begin{numcases}{\hat{F}(\phi)=}{}
		\phi\ln\phi+\frac{(1-\phi)^2}{2\zeta}+(1-\phi)\ln\zeta-\frac{\zeta}{2}+\theta\phi(1-\phi),  &
$\phi \textgreater 1-\zeta, $\nonumber\\
		\phi\ln\phi+(1-\phi)\ln(1-\phi)+\theta\phi(1-\phi), & $\zeta \leq \phi \leq 1-\zeta, $\nonumber\\
		(1-\phi)\ln(1-\phi)+\frac{\phi^2}{2\zeta}+\phi\ln\zeta-\frac{\zeta}{2}+\theta\phi(1-\phi),
& $\phi \textless \zeta, $\nonumber
\end{numcases}
\begin{numcases}{\hat{G}(\psi)=}{}
		\psi\ln\psi+\frac{(1-\psi)^2}{2\zeta}+(1-\psi)\ln\zeta-\frac{\zeta}{2}+\theta\psi(1-\psi),
& $\psi \textgreater 1-\zeta, $\nonumber\\
		\psi\ln\psi+(1-\psi)\ln(1-\psi)+\theta\psi(1-\psi),  & $\zeta \leq \psi \leq 1-\zeta, $\nonumber\\
		(1-\psi)\ln(1-\psi)+\frac{\psi^2}{2\zeta}+\psi\ln\zeta-\frac{\zeta}{2}+\theta\psi(1-\psi),
& $\psi \textless \zeta. $\nonumber
\end{numcases}
Obviously, the advantage of using regularization potential is that the domain of $\hat{F}$ and $\hat{G}$
are $\mathbb{R}$, so we don't need to worry about the overflow caused by any small fluctuation near the
region boundary $(0,1)$ of the numerical solution.

	\begin{figure}[t]
		\centering
		\includegraphics[height=0.28\textwidth,width=0.28\textwidth]{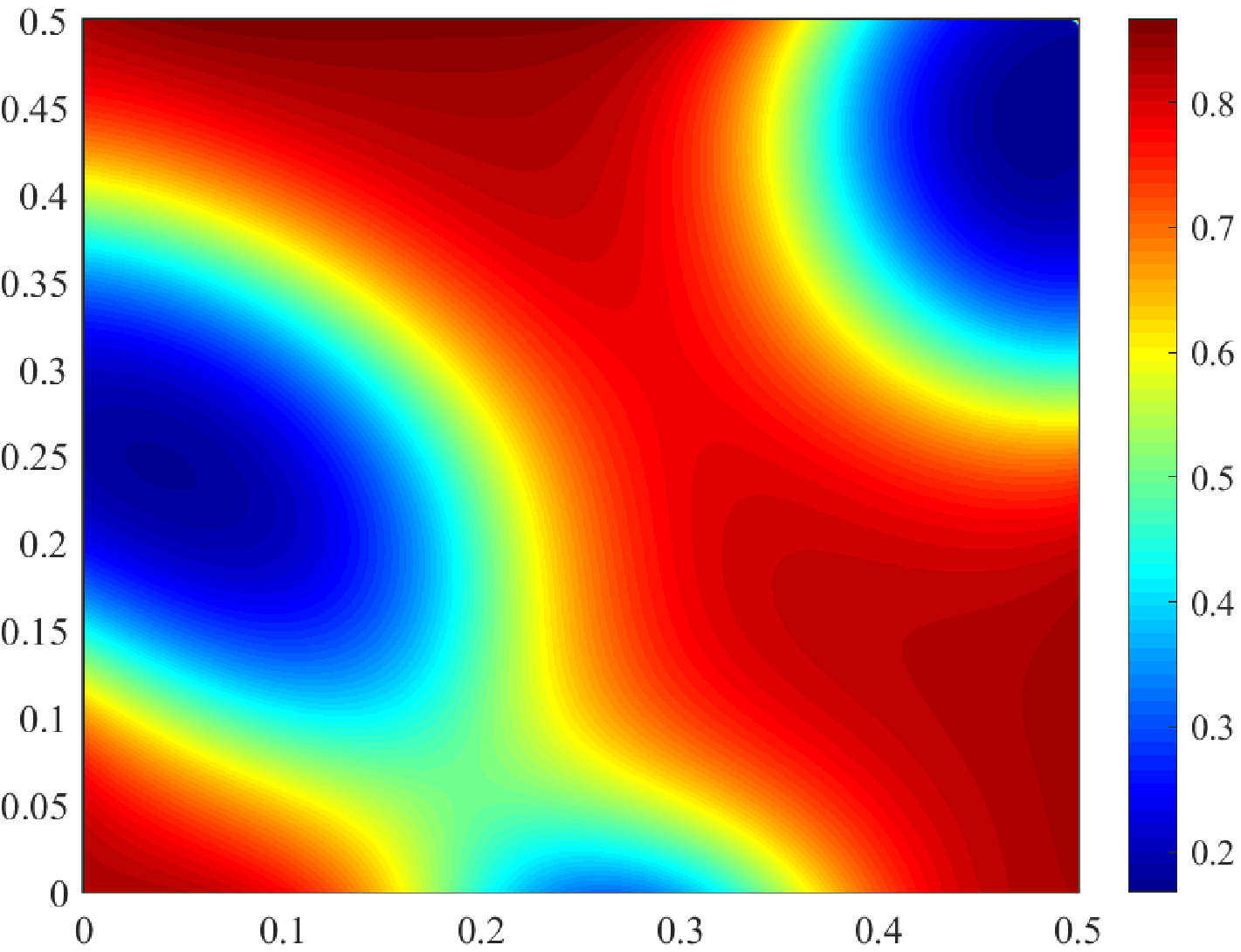}
		\includegraphics[height=0.28\textwidth,width=0.28\textwidth]{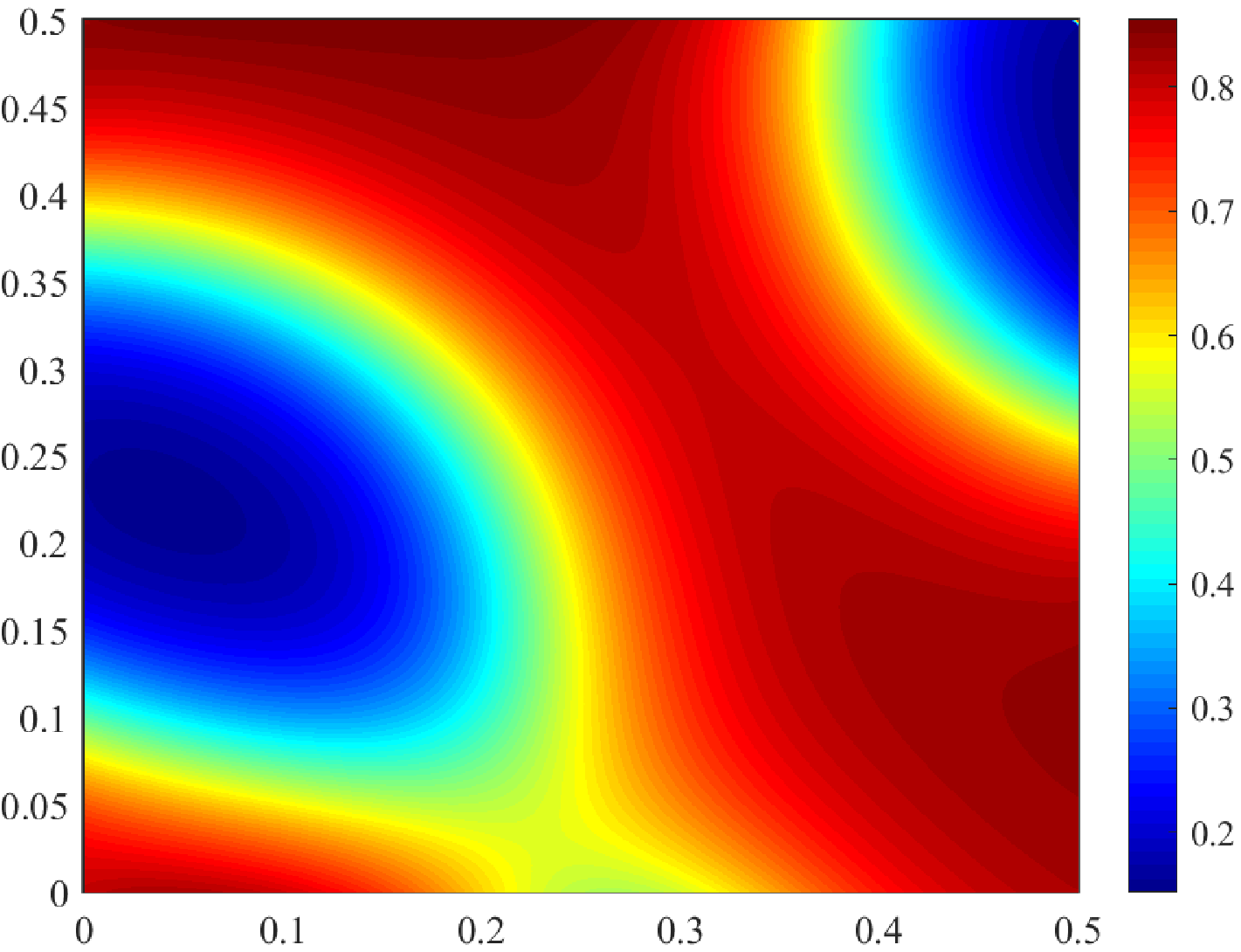}
		\includegraphics[height=0.28\textwidth,width=0.28\textwidth]{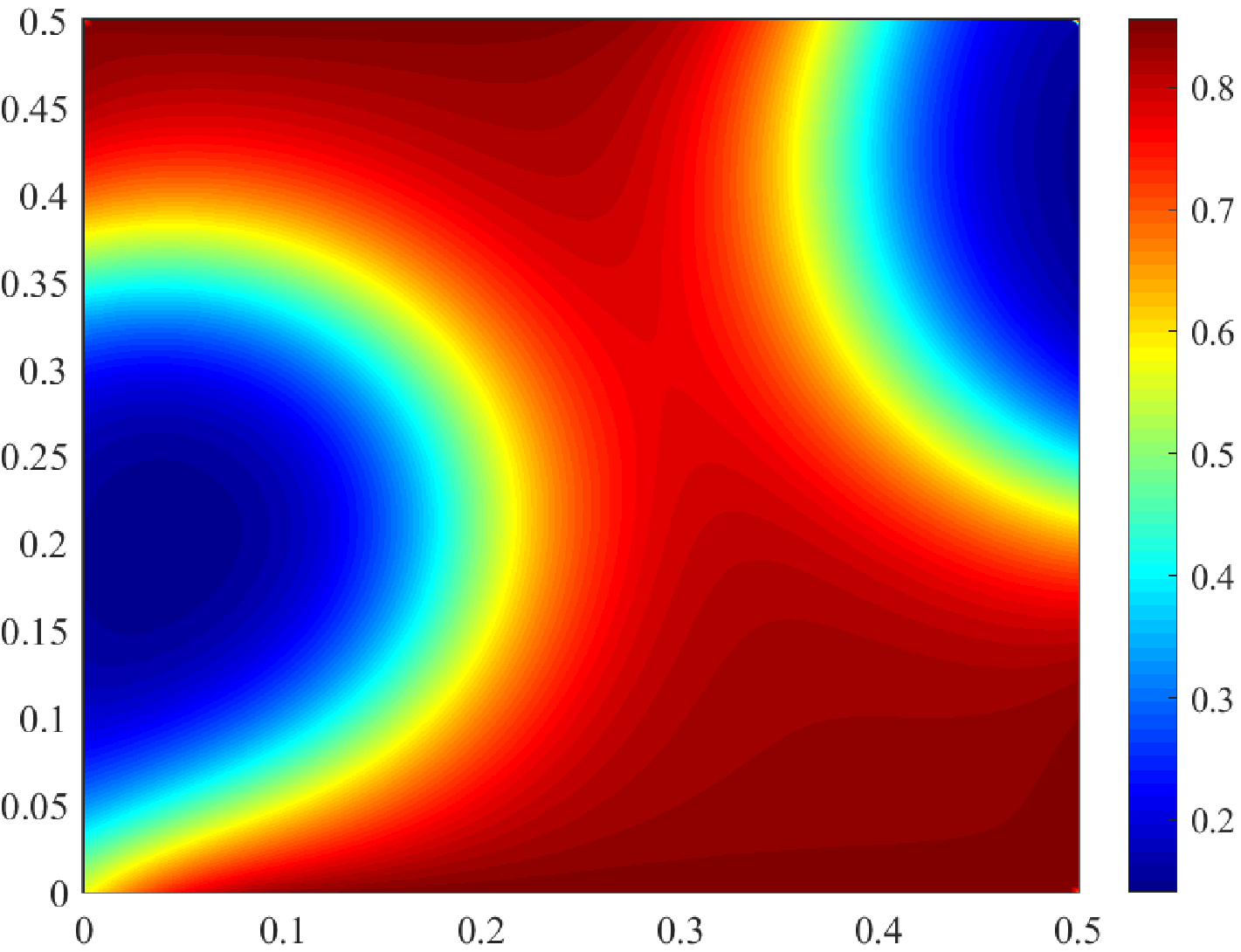}
		\includegraphics[height=0.28\textwidth,width=0.28\textwidth]{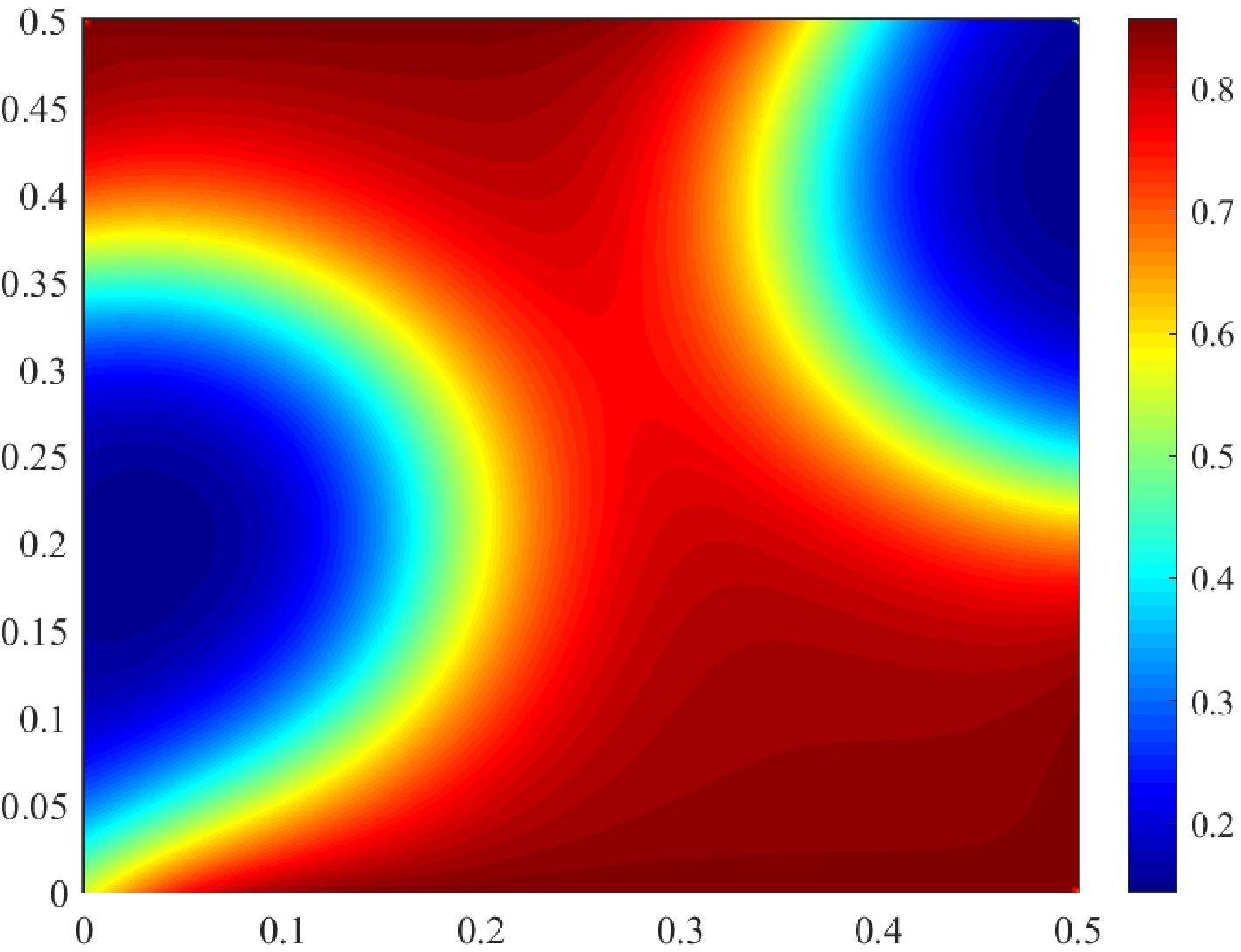}
		\includegraphics[height=0.28\textwidth,width=0.28\textwidth]{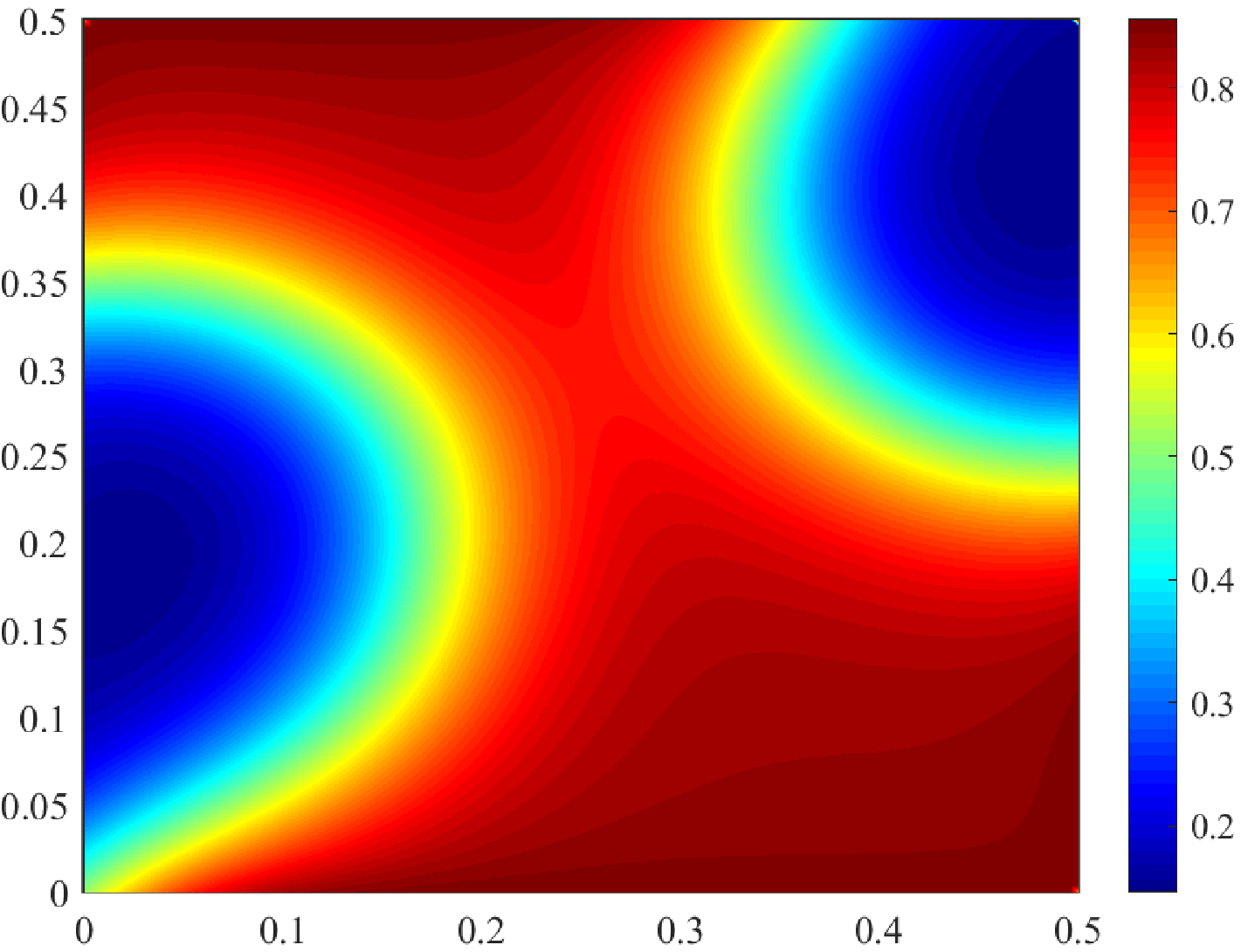}
		\includegraphics[height=0.28\textwidth,width=0.28\textwidth]{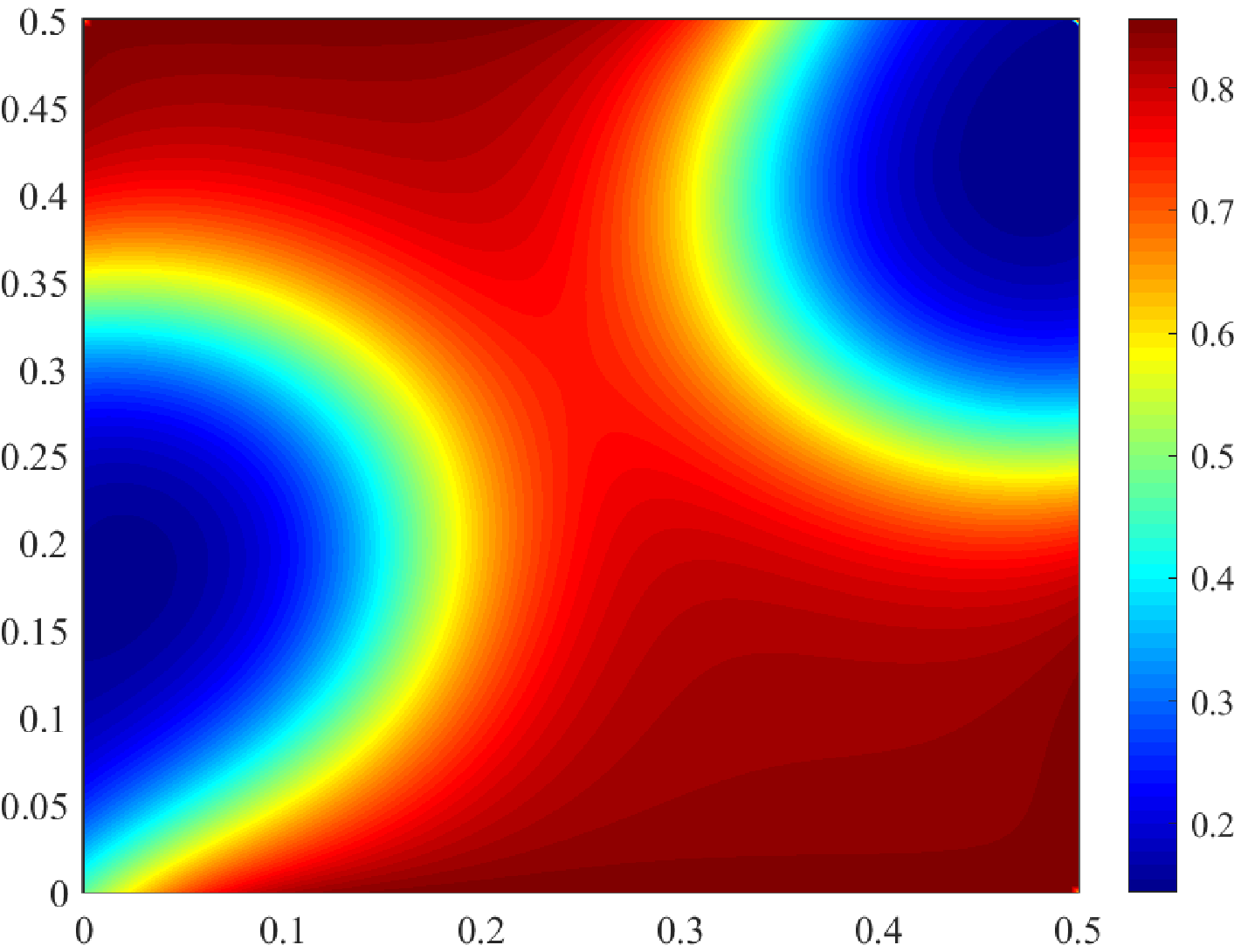}
		\caption{Snapshots of the phase variable $\phi$ at $t=0.005,0.01,0.015,0.02,0.035,0.05$
with the Flory-Huggins potential.}
		\label{fig15}
	\end{figure}
	
\begin{figure}[h]
		\centering
		\includegraphics[height=0.28\textwidth,width=0.28\textwidth]{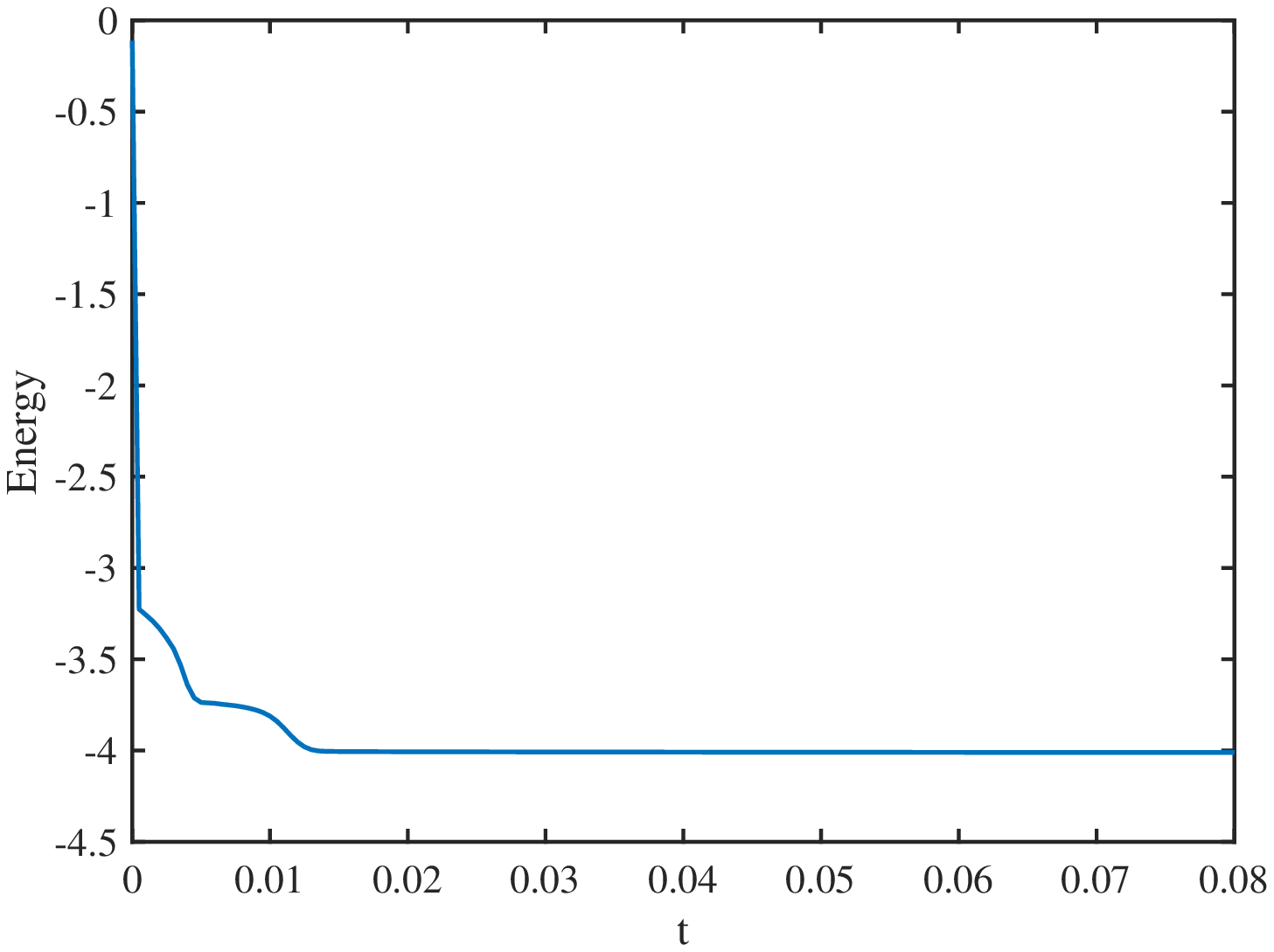}
		\includegraphics[height=0.28\textwidth,width=0.28\textwidth]{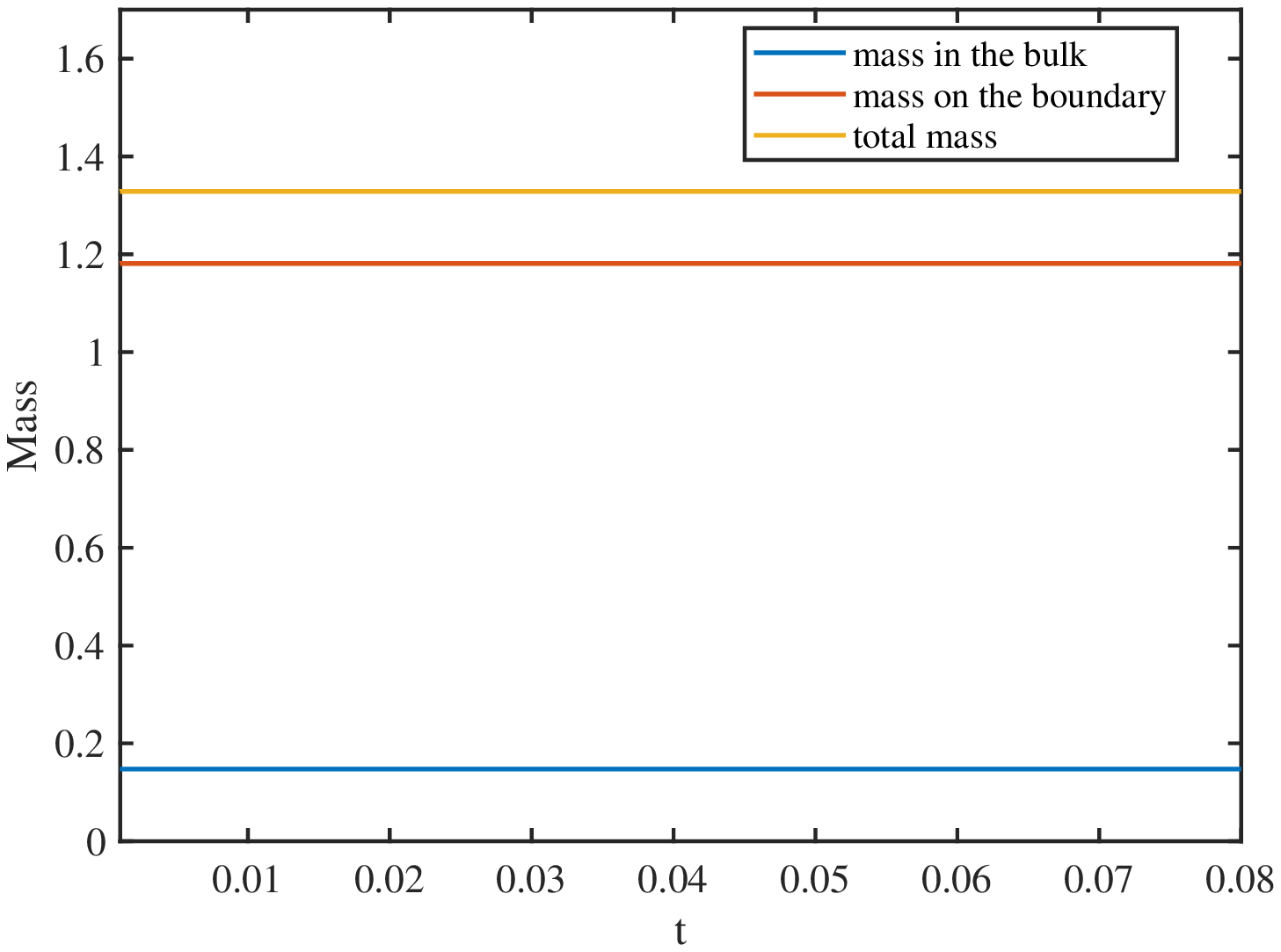}
		\caption{Energy evolution (left); Mass evolution (right) with  the Flory-Huggins potential.}		
		\label{fig16}
	\end{figure}	
Here, we conduct the numerical simulations on the domain $\Omega=[0,0.5]^2 \subset \mathbb{R}^2$. The domain
size and the region is evenly divided into $128\times 128$ grids. The time step $\tau=10^{-4}$. The parameters are set
as $\varepsilon=\delta=0.05,\, \kappa=1,\,  \theta=2.5,\,\zeta=0.005$.
The artificial parameters $A_1=A_2=10,\, B_1=B_2=500$ are used to ensure that the scheme
is stable. The initial data is set as random numbers between $0.4$ and $0.6$.
The numerical results at $t=0.005,\, 0.01,\, 0.015,\, 0.02,\, 0.035,\, 0.05$ are plotted in Figure \ref{fig15}.
It is seen that the numerical solution roughly lies in the interval $(0.1,0.9)$, which makes the Flory-Huggins energy
potential well-defined. The phase field along the boundary is dynamically developed, see Figure \ref{fig15}. However,
both the total mass in the interior domain and on the boundary remain unchanged, see Figure \ref{fig16}. The energy
development is also displayed in Figure \ref{fig16}, again indicating the energy decreasing throughout the computation
and a quick decay at early stage.
\begin{rem}
In the actual numerical computations, we find the proposed BDF2 scheme displays the property of stability energy with the
 stabilizers $A_1,\, A_2,\,B_1$ and $B_2$ much smaller than the theoretical ones \eqref{StaCon1}-\eqref{StaCon3}.
\end{rem}

\section{Conclusions}
\label{sec6}
To the best of our knowledge, we are the first to propose the second-order stabilized semi-implicit
linear scheme for the Cahn-Hilliard equation with dynamic boundary conditions. The nonlinear bulk
forces are treated explicitly with four additional linear stabilization terms:
$A_1\tau\Delta\left(\phi^{n+1}-\phi^{n}\right)$,
$B_1\left(\phi^{n+1}-2\phi^n+\phi^{n-1}\right)$,
$A_2\tau\Delta_\Gamma\left(\psi^{n+1}-\psi^{n}\right)$ and $B_2\left(\psi^{n+1}-2\psi^n+\psi^{n-1}\right)$.
By a serial of estimates both in the bulk and on the boundary, we find the modified total energy decays throughout the time.
We also present a rigorous analysis to obtain an optimal error estimate for the proposed BDF2-type scheme, which is a more challenging
work than the numerical analysis with classical boundary conditions. Numerical experiments with various of initial conditions and
potential functions are presented to verify the stability and accuracy of the scheme, also we find  many interesting phenomena caused by dynamic boundary condition.

\section*{Acknowledgement}
Z.R. Zhang is partially supported by the NSFC No.11871105. The authors would like to thank Prof. Cheng Wang for the helpful discussions.

\bibliographystyle{plain}
\bibliography{ref1}

\vspace{2mm}

\end{document}